\newcommand{\ds}{\displaystyle}
\newcommand{\ben}{\begin{enumerate}}
\newcommand{\een}{\end{enumerate}}
\newcommand{\eq}[2][label]{\begin{equation}\label{#1}#2\end{equation}}
\newcommand{\av}[2]{\langle #1\rangle_{_{\scriptstyle #2}}}
\newcommand{\ave}[1]{\langle #1\rangle}
\newcommand{\ve}{\varepsilon}
\newcommand{\rtde}{\sqrt{\delta^2-\ve^2}}
\newcommand{\rtdet}{\sqrt{\delta^2-\ve^2/2-\theta_*^2}}
\newcommand{\Oe}{\Omega_\ve}
\newcommand{\Od}{\Omega_\delta}
\newcommand{\bel}[1]{\boldsymbol{#1}}
\newcommand{\BMO}{{\rm{BMO}}}
\newcommand{\df}{\stackrel{\mathrm{def}}{=}}
\newcommand{\half}{\frac12}
\newtheorem{thmc}{Theorem}
\newtheorem{thmd}{Theorem}
\newtheorem{lemc}{Lemma}
\newtheorem{lemd}{Lemma}
\newtheorem{prop}{Proposition}
\newtheorem{obs}{Observation}
\newtheorem{propm}{Proposition}
\newtheorem{obsm}{Observation}
\newtheorem*{theorem*}{Theorem}{\bf}{\it}
\newtheorem*{proposition*}{Proposition}{\bf}{\it}
\newtheorem*{observation*}{Observation}{\bf}{\it} 
\newtheorem*{lemma*}{Lemma}{\bf}{\it}
\theoremstyle{definition}
\theoremstyle{remark}
\numberwithin{equation}{section}
\begin{document}

\title{Sharp results in the integral-form John--Nirenberg inequality}

\author{L. Slavin} 
\address{University of Cincinnati}
\email{leonid.slavin@uc.edu}

\author{V. Vasyunin}
\address{St. Petersburg Department of the V.~A.~Steklov Mathematical Institute, RAS}
\email{vasyunin@pdmi.ras.ru}
\thanks{V. Vasyunin's research supported in part by RFBR (grant no. 05-01-00925)}

\subjclass[2000]{Primary 42A05, 42B35}

\keywords{Bellman function method, John--Nirenberg inequality, $\BMO$}

\date{May 7, 2008}

\begin{abstract}
We consider the strong form of the John-Nirenberg inequality for the $L^2\!$-based $\BMO.$ We construct explicit Bellman functions for the inequality in the continuous and dyadic settings and obtain the sharp constant as well as the precise bound on the inequality's range of validity, both previously unknown. The results for the two cases are substantially different. The paper not only gives another instance in the short list of such explicit calculations, but also presents the Bellman function method as a sequence of clear steps, adaptable to a wide variety of applications.
\end{abstract}

\maketitle
\makeatletter
\makeatother
{\small
\tableofcontents
}
\section{Introduction}
\label{intro} 
In this paper, we are dealing with the space $\BMO$ that first appeared in the
classical paper \cite{jn}. A crucial property of elements of $\BMO,$ the exponential decay of
their distribution function, was also established in that paper; it is now known as the
weak-form John--Nirenberg inequality.

For an interval $I,$ and a real-valued function $\varphi\in L^1(I),$ let $\av{\varphi}I$ be
the average of $\varphi$ over $I,$ $\av{\varphi}I=\frac1{|I|}\int_I\varphi.$ For $1\le
p<\infty,$ let \eq[i1]{ \BMO(I)=\left\{\varphi\in L^1(I): \av{|\varphi-\av{\varphi}J|^p}J\le
C^p<\infty,\; \forall~\text{interval}~J\subset I\right\} } with the best (smallest) such $C$
being the corresponding norm of $\varphi.$ The classical definition of John and Nirenberg
uses $p=1;$ it is known that the norms for different $p\!$'s are equivalent. For every
$\varphi\in\BMO(I)$ and every $\lambda\in\mathbb{R}$ one has
\begin{theorem*}[John, Nirenberg; weak form]
\eq[i1.5]{
\frac1{|I|}|\{s\in I:~\varphi-\av{\varphi}I>\lambda\}|\le c_1e^{-c_2\lambda/\|\varphi\|_{\BMO(I)}}.
}
\end{theorem*}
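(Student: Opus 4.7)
The plan is the classical Calderón--Zygmund stopping-time argument that extracts exponential decay from the BMO hypothesis via a self-improvement step. Since the $L^p$-BMO norms are equivalent as remarked in the excerpt, it suffices to work with $p=1$, and after scaling we may assume $\|\varphi\|_{\BMO(I)} = 1$. Define
\[
F(\lambda) \df \sup_{J,\,\varphi} \frac{\bigl|\{s \in J : \varphi(s) - \av{\varphi}J > \lambda\}\bigr|}{|J|},
\]
with the supremum taken over all intervals $J$ and all $\varphi \in \BMO(J)$ with $\|\varphi\|_{\BMO(J)} \le 1$. It suffices to show $F(\lambda) \le c_1 e^{-c_2 \lambda}$ with constants independent of $\lambda$.

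Fix a level $\alpha > 1$, to be chosen. For a given $I$ and $\varphi$, apply the dyadic Calderón--Zygmund decomposition to $|\varphi - \av{\varphi}I|$: bisect $I$ repeatedly and select as a stopping interval each maximal dyadic $J \subset I$ for which $\av{|\varphi - \av{\varphi}I|}J > \alpha$. The BMO hypothesis guarantees that the initial average is at most $1 < \alpha$, so the selection process starts. Denote the stopping family by $\{J_k\}$. Three standard consequences follow: (i) disjointness and the selection criterion give $\alpha\sum_k |J_k| \le \int_I |\varphi - \av{\varphi}I|\,ds \le |I|$; (ii) the dyadic parent of $J_k$ was not selected, so its average of $|\varphi - \av{\varphi}I|$ is at most $\alpha$, and since $J_k$ is half of its parent one gets $\av{|\varphi - \av{\varphi}I|}{J_k} \le 2\alpha$, hence $|\av{\varphi}{J_k} - \av{\varphi}I| \le 2\alpha$; (iii) Lebesgue differentiation forces $|\varphi - \av{\varphi}I| \le \alpha$ a.e.\ on $I \setminus \bigcup_k J_k$.

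For $\lambda > 2\alpha$, item (iii) implies $\{s \in I : \varphi(s) - \av{\varphi}I > \lambda\} \subset \bigcup_k J_k$, and on each $J_k$ item (ii) converts the inequality into $\varphi(s) - \av{\varphi}{J_k} > \lambda - 2\alpha$. Applying the definition of $F$ inside each $J_k$ and summing against (i) yields
\[
\bigl|\{s \in I : \varphi(s) - \av{\varphi}I > \lambda\}\bigr| \le F(\lambda - 2\alpha)\sum_k |J_k| \le \frac{F(\lambda - 2\alpha)}{\alpha}\,|I|,
\]
so the self-improving inequality $F(\lambda) \le \alpha^{-1} F(\lambda - 2\alpha)$ holds. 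Iterating against the trivial bound $F \le 1$ gives $F(\lambda) \le \alpha \cdot \exp\!\bigl(-\tfrac{\log \alpha}{2\alpha}\,\lambda\bigr)$, which is the desired exponential decay.

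The only real obstacle is bookkeeping: the choice of $\alpha$ trades off the per-iteration gain against the iteration step-size, and any $\alpha > 1$ produces admissible constants $(c_1, c_2)$. No optimization over $\alpha$ in this argument recovers the sharp exponent; obtaining that sharp $c_2$ is precisely the motivation for the present paper, where the Bellman-function machinery will supplant this coarse stopping-time reasoning.
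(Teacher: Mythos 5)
Your argument is correct and is essentially the original Calder\'on--Zygmund stopping-time proof of John and Nirenberg; the self-improvement inequality $F(\lambda)\le\alpha^{-1}F(\lambda-2\alpha)$, obtained from (i)--(iii) of the decomposition, iterates against the trivial bound $F\le1$ exactly as you describe, giving $c_1=\alpha$ and $c_2=\log\alpha/(2\alpha)$. Note, however, that the paper itself does not prove this statement: the weak-form inequality is quoted as classical background (with a reference to the John--Nirenberg paper), and the paper's actual contribution is the sharp \emph{integral-form} inequality, which it attacks by an entirely different route --- the explicit Bellman function construction --- rather than by a stopping-time decomposition. Your closing remark is therefore well taken: the stopping-time scheme is robust but cannot recover the sharp exponent, and that gap is precisely what motivates the Bellman approach developed in the body of the paper.
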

$\BMO$ plays a major role in modern analysis (in particular, because it is dual to the Hardy
space $H^1$ \cite{fefferman}). In addition, inequality \eqref{i1.5} can be viewed as an
accurate characterization of unbounded $\BMO$ functions. It is thus of great interest to
determine sharp constants $c_1$ and $c_2.$ For the classical case $p=1,$ Korenvoskii
\cite{korenovsky} established the exact value $c_2=2/e.$ Inequality \eqref{i1} can be
integrated to produce an equivalent statement. For $\ve\ge0,$ let
$$
\BMO_\ve(I)=\{\varphi\in\BMO(I):\|\varphi\|\le\ve\}.
$$
Then we have
\begin{theorem*}[John, Nirenberg; integral form]
\label{jn}
There exists $\ve_0>0$ such that for every $0\le\ve<\ve_0$ there is
$C(\ve)>0$ such that for any function $\varphi \in \BMO_{\ve}(I),$
\eq[i2]{
\av{e^\varphi}I\le C(\ve)e^{\av{\varphi}I}.
}
\end{theorem*}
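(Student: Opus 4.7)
The plan is to derive \eqref{i2} from the weak-form inequality \eqref{i1.5} via the layer-cake representation. First, I would pass to the mean-zero part $\psi \df \varphi - \av{\varphi}{I};$ since $\|\psi\|_{\BMO(I)} = \|\varphi\|_{\BMO(I)} \le \ve$ and $\av{e^\varphi}{I} = e^{\av{\varphi}{I}}\av{e^\psi}{I},$ the claim reduces to a uniform bound of the form $\av{e^\psi}{I} \le C(\ve).$ Writing
\begin{equation*}
\av{e^\psi}{I} = 1 + \av{(e^\psi-1)_+}{I} - \av{(1-e^\psi)_+}{I} \le 1 + \av{(e^\psi-1)_+}{I},
\end{equation*}
the task narrows to estimating the positive exponential tail.

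For that, the layer-cake formula combined with the substitution $u = \ln(1+t)$ yields
\begin{equation*}
\av{(e^\psi-1)_+}{I} = \int_0^\infty \frac{|\{s \in I: \psi(s) > u\}|}{|I|}\, e^u\, du,
\end{equation*}
and inserting the weak-form bound \eqref{i1.5} applied to $\psi$ gives
\begin{equation*}
\av{(e^\psi-1)_+}{I} \le c_1 \int_0^\infty e^{(1 - c_2/\ve)u}\, du.
\end{equation*}
This last integral converges if and only if $\ve < c_2,$ which forces the threshold $\ve_0 = c_2$ for this argument; below it, the integral evaluates to $\ve/(c_2-\ve),$ producing the admissible constant $C(\ve) = 1 + c_1\ve/(c_2-\ve).$

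The main obstacle here is not the derivation itself, which is essentially mechanical, but that it is inherently suboptimal: the argument chains a Chebyshev-type control of the distribution function with a pointwise exponential estimate, so neither the threshold $\ve_0=c_2$ nor the resulting $C(\ve)$ can be expected to be sharp. Indeed, as the abstract of the paper indicates, the main business is precisely to bypass this intermediate step and attack \eqref{i2} directly through the Bellman-function method, thereby identifying the true extremal values of $\ve_0$ and $C(\ve);$ the layer-cake argument above serves only to confirm existence.
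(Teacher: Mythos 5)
Your argument is correct and is precisely the classical derivation the paper alludes to when it says the weak form \eqref{i1.5} ``can be integrated'' to yield the integral form (the reference to \eqref{i1} in that sentence is evidently a typo for \eqref{i1.5}). The paper never carries this derivation out: the integral-form theorem is quoted as background, and the paper's actual contribution, Theorem 1c, replaces this soft existence statement by a sharp one --- $\ve_0 = 1$ and $C(\ve) = e^{-\ve}/(1-\ve)$ for the $L^2$-based $\BMO$ --- established through the Bellman function method rather than via the weak form. Your diagnosis of why the layer-cake route is structurally suboptimal is exactly right: it chains the distribution-function estimate (itself obtained from a stopping-time argument that is not tight) with a pointwise exponential majorant, losing extremal information at both steps, so at best it yields $\ve_0 \ge c_2$ without ever identifying $\ve_0$. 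The Bellman method instead poses the extremal problem directly in the Bellman variables $\bigl(\av{\varphi}J,\av{\varphi^2}J\bigr)$, reduces the required local concavity to an ODE, and exhibits explicit extremizers; this is how the paper obtains not only the sharp constants but also the more subtle fact that the dyadic threshold $\ve_0^d=\sqrt2\log2$ is strictly smaller than the continuous one $\ve_0=1$ --- a distinction that cannot emerge from the layer-cake computation.
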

This paper has two main objectives: the first one is to establish, for the case $p=2,$ the
sharp values for $\ve_0$ and $C(\ve)$ in \eqref{i2}. We accomplish this for the continuous
$\BMO$ defined above as well as its dyadic analog $\BMO^d,$ for which every subinterval $J$
of $I$ in definition \eqref{i1} is an element of the dyadic lattice rooted in $I.$

The second objective is to showcase the tool that is at the center of the proofs. It is the
Bellman function method, a powerful harmonic analysis technique developed in the past 12
years. In the important paper \cite{burkholder}, Burkholder found what can now be understood
as the first explicit harmonic analysis Bellman functions. However, his language was
different and the method did not appear in its present form until 1995, when a two-weight
martingale transform was handled in \cite{haar} (later published as \cite{haar1}). In the big
paper \cite{nt}, the authors define many Bellman functions, as a matter of both developing
the method and solving several important problems. Many results, using different variants of
the technique, have followed but until \cite{vasyunin} was published in 2003, none had found
their Bellman functions explicitly, instead relying on Bellman-type arguments, when one uses
a substitute function with similar size and concavity properties.

The list of explicit Bellman functions is still very short. Besides \cite{vasyunin}, we note
the papers \cite{melas,vv,svol}; several others are in the works. While the present paper
gives one of the earliest known such computations (see \cite{s,vasyunin-prep}), it has taken
time to bring it to print. Finding the corresponding Bellman function exactly will always
yield sharp results for an inequality, but this paper also has methodical value: it is our
hope that it will further a new paradigm in Bellman investigations, help bring about a new
pure-Bellman template. To describe it briefly, upon choosing the Bellman variables and
setting up the corresponding extremal problem, one is to establish the finite-difference
inequality(ies) codifying the concavity (convexity) of the Bellman function along the
trajectories defined by the choice of variables. The inequality then is rephrased as a set of
partial differential conditions, which are ``sharpened'' to become what we now call ``the
Bellman PDE.'' Using homogeneity inherent in the problem, one reduces the order of the PDE
and finds a solution, a ``candidate'' Bellman function. Then, one proves that the candidate
is indeed the true Bellman function, using a dyadic-type induction on scales in one direction
and finding an extremal function to establish the other. We follow this template in both,
continuous and dyadic, cases.

Surprisingly, the solution of that PDE turns out to be the Bellman function for the {\it
continuous} John--Nirenberg setup, and that takes a substantial amount of work to show. We
then solve the dyadic case, using the continuous solution as a starting point. The results
for the two cases turn out to be drastically different.

As the name suggests, the method has its origins in stochastic optimal control. We refer the
reader to papers \cite{vol,vol1} where the connection between the two incarnations of the
method is explored. In fact, it was an early version of \cite{vol} where we first saw a
Bellman setup for a dyadic version of inequality \eqref{i2}. The authors then stated a formal
PDE for the problem and found a majorant of its solution, in effect establishing the dyadic
inequality with some suboptimal values. Each of us, independently, solved the PDE exactly,
and we then pooled our efforts to proceed from this formal solution to the rigorous proof of
our theorems. We would especially like to acknowledge the help of A.~Volberg who formulated
the problem to each of us and brought us together.
\section{The Bellman setup}
\label{setup}
We use definition \eqref{i1} with $p=2.$ The main reason is that it can then be
rewritten as
$$
\BMO(I)=\left\{\varphi\in L^1(I):
\av{\varphi^2}J-\av{\varphi}J^2\le C^2,\;\forall~\text{interval}~J\subset I\right\}
$$
with the norm
$$
\|\varphi\|_{\BMO(I)}=\left(\sup_{J\subset I}\left\{\av{\varphi^2}J-\av{\varphi}J^2\right\}\right)^{1/2},
$$
with the appropriate modifications for the dyadic space $\BMO^d.$ This rewriting greatly
facilitates the description of the problem in terms of Bellman variables, as shown below.

As mentioned above, by $\BMO_\ve(I)$ and $\BMO^d_\ve(I)$ we denote the $\ve\!$-ball (the ball
of radius $\ve$ centered at $0$) in the corresponding space. With every such ball and the set
of all subintervals $J\subset I$ we associate the domain $\Oe=\{x=(x_1,x_2)\colon x_1\in
\mathbb{R},\; x_1^2\le x_2\le x_1^2+\ve^2\},$ as follows 
\eq[var]{ 
(\varphi, J)\longmapsto
\left(\av{\varphi}J,\av{\varphi^2}J\right). 
} 
This map is well-defined because
$\av{\varphi}J^2\le \av{\varphi^2}J$ (Cauchy inequality) and $\varphi\in \BMO_{\ve}(I)$
$(\BMO^d_{\ve}(I)).$ On $\Oe$ we define the following Bellman functions 
\eq[t6]{
\bel{B}^+_{\ve}(x)=\sup_{\varphi\in \BMO_{\ve}(I)}\left\{\av{e^\varphi}I:~ \av{\varphi}I=x_1,
\av{\varphi^2}I=x_2\right\}, 
} 
\eq[t6.1]{ 
\bel{B}^-_{\ve}(x)=\inf_{\varphi\in
\BMO_{\ve}(I)}\left\{\av{e^\varphi}I:~ \av{\varphi}I=x_1, \av{\varphi^2}I=x_2\right\}, 
}
\eq[t7]{ 
\bel{B}^{d+}_{\ve}(x)=\sup_{\varphi\in \BMO^d_{\ve}(I)}\left\{\av{e^\varphi}I:~
\av{\varphi}I=x_1, \av{\varphi^2}I=x_2\right\}, 
} 
\eq[t7.1]{
\bel{B}^{d-}_{\ve}(x)=\inf_{\varphi\in \BMO^d_{\ve}(I)}\left\{\av{e^\varphi}I:~
\av{\varphi}I=x_1, \av{\varphi^2}I=x_2\right\}. 
}
Observe that these functions do not depend
on $I.$ The functions with ``$+$'' give the exact upper bound on $\av{e^\varphi}I$ (and so the
sharp John--Nirenberg inequality), while the ones with ``$-$'' give the lower bound. While the
overall lower bound (over all $x$) is well-known ($\av{e^\varphi}I\ge e^{\av{\varphi}I},$ by
Jensen's inequality), the lower Bellman functions give nontrivial results for each particular
choice of $x.$ In addition, they arise naturally in the process of solving the Bellman PDE.

Until now, a typical Bellman function proof would first establish a dyadic result and then
try to come up with a continuous analog. A remarkable feature of our result is that we first
find a family of ``continuous'' Bellman functions and then choose appropriate members of
that family to deal with the dyadic case.
\section{Main results}
\label{main} 
Throughout the paper, we will mark results about the continuous case with index
``c'' and their dyadic analogs with index ``d.'' 
\begin{thmc}
\label{t1c}
Let $\ve_0=1.$ For every $0\le\ve<\ve_0,$ let
\eq[t9]{
C(\ve)=\frac{e^{-\ve}}{1-\ve}.
}
Then\textup, for any $\varphi\in \BMO_{\ve}(I),$
\eq[t8]{
\av{e^\varphi}I\le C(\ve)e^{\av{\varphi}I}.
}
Moreover\textup, $\ve_0$ and $C(\ve)$ are sharp.
\end{thmc}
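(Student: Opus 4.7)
The plan is to determine the upper Bellman function $\bel{B}^+_\ve$ on $\Oe$ explicitly; the inequality \eqref{t8} is equivalent to the pointwise estimate $\bel{B}^+_\ve(x) \le C(\ve) e^{x_1}$ on $\Oe$, saturated exactly on the upper parabola $x_2 = x_1^2 + \ve^2$. The first reduction is by symmetry: the change $\varphi \mapsto \varphi + c$ sends $(x_1, x_2) \mapsto (x_1 + c,\, x_2 + 2cx_1 + c^2)$ and multiplies $\av{e^\varphi}I$ by $e^c$, so $e^{-x_1}\bel{B}^+_\ve(x)$ depends only on $u \df x_2 - x_1^2 \in [0,\ve^2]$. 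Writing $\bel{B}^+_\ve(x) = e^{x_1} f(u)$, the Cauchy equality case $u = 0$ forces $\varphi \equiv \av\varphi I$ a.e., giving $f(0) = 1$.

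The second step is to derive and solve the Bellman PDE. The supremum structure of \eqref{t6} yields the chord inequality
$$
\bel{B}^+_\ve(\alpha_+ x^+ + \alpha_- x^-) \ge \alpha_+ \bel{B}^+_\ve(x^+) + \alpha_- \bel{B}^+_\ve(x^-)
$$
for every admissible pair $x^\pm \in \Oe$ and every convex combination with $\alpha_+ + \alpha_- = 1$. Sharpness along the extremal chords (tangent to the upper parabola) corresponds to the degenerate Monge--Amp\`ere equation $\det D^2 \bel{B}^+_\ve = 0$ in the interior of $\Oe$, reflecting that $\bel{B}^+_\ve$ is affine along a one-parameter foliation of extremal segments. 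Substituting the ansatz $e^{x_1} f(u)$ collapses this into a second-order ODE for $f$; combined with $f(0) = 1$ and the natural matching condition at the upper parabola, its solution turns out to be regular on $[0,\ve^2]$ if and only if $\ve < 1$---singling out $\ve_0 = 1$---and yields $f(\ve^2) = e^{-\ve}/(1-\ve) = C(\ve)$.

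For the upper bound, set $B(x) \df e^{x_1} f(u)$ and verify that $B$ is concave along every admissible chord in $\Oe$. The standard Bellman induction then proceeds: halve $I$ dyadically, apply the chord inequality at each scale, and pass to the limit on the lower parabola, where $B(\varphi(s), \varphi(s)^2) = e^{\varphi(s)}$, to conclude $\av{e^\varphi}I \le B(\av\varphi I,\, \av{\varphi^2}I) \le C(\ve) e^{\av\varphi I}$, which is \eqref{t8}. Sharpness of both $C(\ve)$ and $\ve_0$ is witnessed by the logarithmic extremizer $\varphi^*(t) = -\ve \log t$ on $I = (0,1)$: a direct calculation gives $\av{\varphi^*}I = \ve$, $\av{(\varphi^*)^2}I = 2\ve^2$, $\|\varphi^*\|_{\BMO(I)} = \ve$, and $\av{e^{\varphi^*}}I = 1/(1-\ve) = C(\ve) e^{\av{\varphi^*}I}$; the blow-up of this ratio as $\ve \to 1^-$ precludes any $\ve_0 > 1$.

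The main obstacle is the PDE analysis: solving the Monge--Amp\`ere-derived ODE in closed form, recognising $e^{-\ve}/(1-\ve)$ as the sharp constant, and then proving that the constructed candidate $B$ is genuinely concave on all of $\Oe$---not merely along its characteristic foliation. Additional care is required near the upper boundary $u = \ve^2$, where the foliation becomes singular, and in turning the iterated chord inequality into a rigorous limiting argument at arbitrarily fine dyadic scales.
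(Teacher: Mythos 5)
Your extremizer computation and the symmetry reduction to $e^{x_1}f(x_2 - x_1^2)$ match the paper's, and the ODE you derive is the right one. But the Bellman-induction step, as you have written it, has a genuine gap in the \emph{continuous} setting: you propose to ``halve $I$ dyadically, apply the chord inequality at each scale.'' That would require the candidate $B(x) = B^+_\ve(x)$ to satisfy the midpoint-concavity inequality $B(\frac12 x^- + \frac12 x^+) \ge \frac12 B(x^-) + \frac12 B(x^+)$ whenever $x^\pm \in \Oe$ and $\frac12(x^- + x^+) \in \Oe$ --- but the chord $[x^-,x^+]$ can leave $\Oe$ (it can bulge above the upper parabola $x_2 = x_1^2 + \ve^2$), and $B^+_\ve$ is only locally concave \emph{inside} $\Oe$. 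If you try to salvage the dyadic induction by replacing $B^+_\ve$ with a more concave $B^+_\delta$, $\delta > \ve$, you get exactly the dyadic theorem of the paper, whose constant $C^d(\ve)$ is strictly larger than $C(\ve) = e^{-\ve}/(1-\ve)$. So the argument as proposed cannot yield the claimed constant.

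The paper circumvents this via a splitting lemma (Lemma~\ref{l4c}) special to the continuous case: rather than splitting at the midpoint, one shows that for any $\ve_1 > \ve$ there is an adaptive split $I = I_- \cup I_+$, with ratio uniformly bounded away from $0$ and $1$, such that the entire chord $[x^-, x^+]$ stays inside $\Omega_{\ve_1}$. One then runs the induction with $B^+_{\ve_1}$ (locally concave on $\Omega_{\ve_1} \supset$ the chord), passes to the fine-scale limit, and lets $\ve_1 \downarrow \ve$ using continuity of $B^+_\delta$ in $\delta$. Your proposal is missing both this adaptive-split idea and the $\ve_1 \to \ve$ limiting step. A secondary, smaller omission: the fine-scale limit uses dominated convergence of $e^{\varphi_n}$, which requires $\varphi$ to be bounded above; one must first run the argument for cut-offs $\psi_m = \min(\psi, m)$ (verifying $\psi_m \in \BMO_\ve$) and then pass $m \to \infty$. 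Finally, for the sharpness of $\ve_0 = 1$ it is cleaner to note directly that $\varphi^*(t) = -\log t$ has $\BMO$-norm $1$ yet $\av{e^{\varphi^*}}I = \infty$, rather than appeal to blow-up of the ratio as $\ve \to 1^-$.
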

\begin{thmd}
\label{t1d} 
Let $\ve^d_0=\sqrt2\log 2.$ For every $0\le\ve<\ve^d_0,$ let
\eq[t9.5]{
C^d(\ve)=\frac{e^{-\frac\ve{\sqrt2}}}{2-e^{\frac\ve{\sqrt2}}}, }
Then\textup, for any
$\varphi\in \BMO^d_{\ve}(I),$
\eq[t10]{
\av{e^\varphi}I\le C^d(\ve)e^{\av{\varphi}I}. }
Moreover\textup, $\ve^d_0$ and $C^d(\ve)$ are sharp.
\end{thmd}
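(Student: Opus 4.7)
The plan is to compute the dyadic Bellman function $\bel{B}^{d+}_\ve$ from \eqref{t7} explicitly and then read off the sharp constant. The translation $\varphi \mapsto \varphi + c$ preserves $\BMO^d_\ve(I)$ and the quantity $x_2 - x_1^2$, while shifting $x_1 \mapsto x_1 + c$ and multiplying $\av{e^\varphi}I$ by $e^c$. Hence
$$\bel{B}^{d+}_\ve(x_1, x_2) = e^{x_1}\, G(x_2 - x_1^2)$$
for some function $G: [0, \ve^2] \to [1, \infty)$ with $G(0) = 1$, and Theorem~\ref{t1d} reduces to proving $G(u) \le G(\ve^2) = C^d(\ve)$ for all $u \in [0, \ve^2]$.

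The first step is to establish dyadic concavity (the main inequality): for any $x, x^+, x^- \in \Oe$ with $x = (x^+ + x^-)/2$,
$$\bel{B}^{d+}_\ve(x) \ge \tfrac{1}{2}\bigl(\bel{B}^{d+}_\ve(x^+) + \bel{B}^{d+}_\ve(x^-)\bigr).$$
This follows from \eqref{t7} directly: split $I = I^+ \cup I^-$ dyadically, take near-extremal $\varphi^\pm$ on each half with the appropriate Bellman points, and note that the glued function belongs to $\BMO^d_\ve(I)$ precisely because $x \in \Oe$. Writing $x^\pm = x \pm (a, b)$ and $u^\pm = (x_2 \pm b) - (x_1 \pm a)^2 \in [0, \ve^2]$ yields $u = \tfrac{1}{2}(u^+ + u^-) + a^2$, and under the reduction $\bel{B}^{d+}_\ve = e^{x_1}G$ the main inequality recasts as the one-dimensional functional inequality
$$G(u) \ge \tfrac{1}{2}\bigl(e^a G(u^+) + e^{-a} G(u^-)\bigr), \qquad u^\pm \in [0, \ve^2],\ u = \tfrac{1}{2}(u^+ + u^-) + a^2.$$

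Next I would identify an explicit candidate $G$ by examining the equality case. Along the extremal trajectory one child sits on the upper boundary, say $u^- = \ve^2$, and the first-order optimality in $a$ forces $e^{2a} = G(\ve^2)/G(u^+)$; combining this with the constraint $a^2 = u - (u^+ + \ve^2)/2$ leads to the closed-form candidate
$$G(u) = \frac{e^{-\sqrt{u/2}}}{2 - e^{\sqrt{u/2}}},$$
which satisfies $G(0) = 1$, $G(\ve^2) = C^d(\ve)$, and has a singularity exactly at $\sqrt{u/2} = \log 2$, i.e.\ $\ve = \sqrt{2}\log 2 = \ve^d_0$. The principal technical obstacle is verifying the two-parameter functional inequality for this explicit $G$ over the full admissible region: eliminating $a$ via the first-order condition reduces the task to a one-parameter convexity check that can be settled by direct calculus.

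With the functional inequality in hand, a standard dyadic induction on scales propagates the bound. Iterating the main inequality $n$ generations down gives $M(x) \ge \sum_{J \in \mathcal{D}_n(I)} \tfrac{|J|}{|I|}\, M(x_J)$ for $M(x) = e^{x_1} G(x_2 - x_1^2)$; passing $n \to \infty$ and using Lebesgue differentiation together with $G(0) = 1$ on the lower boundary of $\Oe$ yields $\av{e^\varphi}I \le C^d(\ve)\, e^{\av{\varphi}{I}}$ for every $\varphi \in \BMO^d_\ve(I)$ (first for bounded $\varphi$, then by truncation in general). Sharpness of both $C^d(\ve)$ and $\ve^d_0$ follows by reversing the analysis: build a dyadic step function $\varphi$ tracing the extremal trajectory backward, sending one child to $u = \ve^2$ with the jump $a$ prescribed by the first-order condition at each generation. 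Passing to the limit produces a sequence with $\av{e^\varphi}I / e^{\av{\varphi}{I}} \to C^d(\ve)$, and the construction degenerates precisely at $\ve = \ve^d_0$.
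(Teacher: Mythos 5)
Your Bellman reduction is correctly set up: translation invariance gives $\bel{B}^{d+}_\ve(x)=e^{x_1}G(x_2-x_1^2)$ with $G(0)=1$, and midpoint concavity of $\bel{B}^{d+}_\ve$ on $\Oe$ (the paper's Proposition~\ref{pr7}) recasts as your scalar inequality $G(u)\ge\tfrac12\bigl(e^aG(u^+)+e^{-a}G(u^-)\bigr)$ for $u,u^\pm\in[0,\ve^2]$, $u=\tfrac12(u^++u^-)+a^2$. The dyadic induction on scales and the cutoff/limiting step also match the paper's route (Lemma~\ref{l2d}).

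The candidate you produce, $G(u)=\dfrac{e^{-\sqrt{u/2}}}{2-e^{\sqrt{u/2}}}$ (which is simply $C^d(\sqrt u)$), is not the dyadic Bellman function and does not satisfy your own scalar inequality. Concretely, with $\ve=\tfrac12$, take $u^+=\tfrac14$, $u^-=0.07$, $a=0.2$; then $u=\tfrac12(u^++u^-)+a^2=0.2\le\ve^2$, yet $G(0.2)\approx1.1605$ while $\tfrac12\bigl(e^{0.2}G(0.25)+e^{-0.2}G(0.07)\bigr)\approx1.1721$, a strict violation. The derivation fails because the ``first-order optimality in $a$'' ignores the implicit dependence of $u^+$ on $a$ through the constraint; $\partial_a$ of the right-hand side carries a $G'(u^+)$ term, and $e^{2a}=G(\ve^2)/G(u^+)$ is not a stationarity condition. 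A structural symptom: your $G$ expands as $1+\tfrac u2+\tfrac1{2\sqrt2}u^{3/2}+O(u^2)$, hence $G'(0^+)=\tfrac12$ and a $u^{3/2}$-cusp at the origin, whereas the true Bellman restriction is $G(u)=\dfrac{1-\sqrt{\delta^2-u}}{1-\delta}\,e^{\sqrt{\delta^2-u}-\delta}$ with $\delta=\delta^+(\ve)>\ve$ solving \eqref{t11}, which is analytic at $u=0$ with $G'(0)=\dfrac1{2(1-\delta)}>\tfrac12$. Since the Bellman function is the least admissible supersolution with $G(0)=1$, any candidate strictly smaller than it near $0$ cannot be a supersolution.

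What your argument misses is precisely the crux of the dyadic case: a segment $[x^-,x^+]$ can leave $\Oe$ even when its endpoints and midpoint lie in $\Oe$, so local concavity of $B^+_\ve$ on $\Oe$ is not enough. One must pass to a member $B^+_\delta$ of the family \eqref{t12} with $\delta>\ve$ (locally concave on the larger domain $\Od$) and then verify midpoint concavity on $\Oe$ for that enlarged function. Determining the minimal admissible $\delta$ is a genuine two-stage optimization (Propositions~\ref{pr3}--\ref{pr6}), and it is the resulting transcendental equation \eqref{t11} --- not a boundary normalization --- that yields the constant \eqref{t9.5} and the threshold $\ve_0^d=\sqrt2\log2$, via $G(\ve^2)=\dfrac{(1-\rtde)e^{\rtde}}{(1-\delta)e^{\delta}}=\dfrac{e^{-\ve/\sqrt2}}{2-e^{\ve/\sqrt2}}$.
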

Throughout our presentation we will repeatedly use the following very simple fact.
\begin{prop}
\label{pr1}
If $0\le t_1\le t_2,$ then $(1-t_1)e^{t_1}\ge(1-t_2)e^{t_2}$ and
$(1+t_1)e^{-t_1}\ge(1+t_2)e^{-t_2}.$
\end{prop}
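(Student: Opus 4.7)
The plan is to reduce both inequalities to monotonicity statements on a single variable and then differentiate. Define $f(t)=(1-t)e^{t}$ and $g(t)=(1+t)e^{-t}$; then the two claims are exactly $f(t_1)\ge f(t_2)$ and $g(t_1)\ge g(t_2)$ for $0\le t_1\le t_2$, so it suffices to show that $f$ and $g$ are nonincreasing on $[0,\infty)$.

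I would verify this by computing the derivatives directly. Applying the product rule gives
\[
f'(t)=-e^{t}+(1-t)e^{t}=-t\,e^{t},\qquad g'(t)=e^{-t}-(1+t)e^{-t}=-t\,e^{-t}.
\]
For every $t\ge 0$ both derivatives are nonpositive (and strictly negative for $t>0$), so $f$ and $g$ are nonincreasing on $[0,\infty)$. Hence if $0\le t_1\le t_2$, then $f(t_1)\ge f(t_2)$ and $g(t_1)\ge g(t_2)$, which is exactly the asserted pair of inequalities.

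There is essentially no obstacle here — the only thing to notice is that one does not need to treat the two inequalities separately, since they come from parallel one-variable monotonicity facts with nearly identical derivative computations.
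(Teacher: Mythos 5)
Your proof is correct and is essentially identical to the paper's: both compute $f'(t)=-te^{t}$ and $g'(t)=-te^{-t}$ and conclude that $f$ and $g$ are nonincreasing on $[0,\infty)$.
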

\begin{proof}
Since $\frac d{dt}\left((1-t)e^t\right)=-te^t$ and $\frac
d{dt}\left((1+t)e^{-t}\right)=-te^{-t},$ the functions $t\mapsto(1-t)e^t$ and
$t\mapsto(1+t)e^{-t}$ are decreasing for $t>0.$
\end{proof}
Theorems \ref{t1c} and \ref{t1d} are immediate consequences of Proposition~\ref{pr1} and the following results for
the Bellman functions~\eqref{t6}-\eqref{t7.1}. Let
\eq[t12]{
\begin{array}{l}
\ds B^+_{\delta}(x)=\frac{1-\sqrt{\delta^2+x_1^2-x_2}}{1-\delta}\exp\left(x_1+\sqrt{\delta^2+x_1^2-x_2}-\delta\right),\\
\ds B^-_{\delta}(x)=\frac{1+\sqrt{\delta^2+x_1^2-x_2}}{1+\delta}\exp\left(x_1-\sqrt{\delta^2+x_1^2-x_2}+\delta\right).
\end{array}
}
\begin{thmc}
\label{t2c}
If $0\le\ve<1,$ then
$$
\bel{B}^+_{\ve}(x)=B^+_{\ve}(x);
$$
if
$\ve\ge 1,$ then
$$
\bel{B}^+_{\ve}(x)=
\begin{cases}
e^{x_1}&\hbox{if}~x_2=x_1^2\\
+\infty&\hbox{if}~x_2>x_1^2.
\end{cases}
$$
In addition\textup,
$$
\bel{B}^-_{\ve}(x)=B^-_{\ve}(x),\qquad\forall\ve\ge0.
$$
\end{thmc}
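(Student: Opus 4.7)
The plan is to follow the standard two-step Bellman template: first verify that the candidate $B^\pm_\ve$ is a concave/convex majorant/minorant of the boundary data (giving one direction of equality), then construct extremal sequences $\varphi_n\in\BMO_\ve(I)$ saturating the bound (giving the other). The three claims in Theorem~\ref{t2c}---equality for $\bel{B}^+_\ve$ when $\ve<1$, blow-up when $\ve\ge1$, and equality for $\bel{B}^-_\ve$ for all $\ve$---all fit this template with small adjustments.

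\textbf{Step 1: the majorization $\bel{B}^+_\ve\le B^+_\ve$ for $\ve<1$ (and its dual $\bel{B}^-_\ve\ge B^-_\ve$ for all $\ve$).} I would verify that $B^+_\ve$ is concave on $\Omega_\ve$ and satisfies the obstacle condition $B^+_\ve(a,a^2)=e^a$ on the lower parabola. Introducing $u=\sqrt{\ve^2+x_1^2-x_2}$ recasts $B^+_\ve$ as $\frac{1-u}{1-\ve}\exp(x_1+u-\ve)$, from which the obstacle condition is immediate ($u=\ve$ when $x_2=x_1^2$) and the Hessian calculation becomes tractable. Given concavity and the obstacle condition, standard Bellman induction proceeds as follows: for $\varphi\in\BMO_\ve(I)$, dyadically subdivide $I$ into $2^n$ subintervals, apply iterated midpoint concavity to the children's averages $(\av{\varphi}{I_{n,k}},\av{\varphi^2}{I_{n,k}})\in\Omega_\ve$, and pass $n\to\infty$; Lebesgue differentiation drives these averages a.e.\ onto the lower parabola where $B^+_\ve=e^\varphi$, and the majorization $B^+_\ve(x)\ge\av{e^\varphi}I$ follows. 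The argument for $B^-_\ve$ is dual, with convexity and the same boundary condition.

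\textbf{Step 2: sharpness via explicit extremizers.} I would exhibit a family $\varphi_n$ saturating the bound by following the characteristic curves of $B^+_\ve$, i.e., the curves along which the candidate is linear. In the $(x_1,u)$ coordinates these issue from the upper parabola $u=0$; tracing a characteristic from a target $x=(x_1,x_2)$ back to the upper boundary prescribes a cascade of asymmetric splittings $I\mapsto I_-\cup I_+$ with the children's averages sitting on the upper parabola, yielding two-level step functions. With the splitting ratios chosen to preserve both averages and maintain the BMO variance at $\ve^2$ on every relevant subinterval, one gets $\av{e^{\varphi_n}}I\to B^+_\ve(x)$ as the cascade deepens. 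The construction for $B^-_\ve$ runs off the other branch of the characteristic field.

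\textbf{Step 3: the degenerate regime $\ve\ge1$.} Here the factor $1/(1-\ve)$ reverses sign, concavity of the formal expression fails, and the theorem asserts $\bel{B}^+_\ve=+\infty$ whenever $x_2>x_1^2$. The boundary case $x_2=x_1^2$ forces $\varphi\equiv x_1$ a.e.\ and gives $\av{e^\varphi}I=e^{x_1}$; for $x_2>x_1^2$, the cascade from Step~2 with $\ve\ge1$ produces admissible $\varphi_n$ with $\av{e^{\varphi_n}}I\to\infty$, and this is the source of the sharp threshold. The main obstacle throughout is Step~1's concavity verification: the Hessian of $B^+_\ve$ must be negative semidefinite precisely when $\ve<1$, so that $\ve_0=1$ emerges from the calculation itself. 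I expect this to hinge on the $u$-substitution, with Proposition~\ref{pr1} supplying the decisive monotonicity of $t\mapsto(1-t)e^t$ that makes the sign flip occur exactly at $\ve=1$; the analogous monotonicity of $t\mapsto(1+t)e^{-t}$ handles convexity of $B^-_\ve$ uniformly for all $\ve\ge0$.
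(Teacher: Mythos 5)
Your Step~1 is where the proposal breaks down, and it breaks in a way that is exactly the crux of this paper. You propose to run the Bellman induction by ``dyadically subdivide $I$ into $2^n$ subintervals'' and ``apply iterated midpoint concavity.'' But $B^+_\ve$ is only \emph{locally} concave on $\Omega_\ve$, so the midpoint inequality $B^+_\ve\bigl(\tfrac12(x^-+x^+)\bigr)\ge\tfrac12 B^+_\ve(x^-)+\tfrac12 B^+_\ve(x^+)$ requires the entire segment $[x^-,x^+]$ to lie inside $\Omega_\ve$. For $\varphi\in\BMO_\ve(I)$ and a split of $J$ in half, the endpoints $x^\pm=(\av{\varphi}{J_\pm},\av{\varphi^2}{J_\pm})$ do land in $\Omega_\ve$, but the chord joining them can leave $\Omega_\ve$ entirely. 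This is not a technicality: if midpoint concavity across $\Omega_\ve$ were available, the continuous and dyadic Bellman functions would coincide, whereas Theorem~2d shows they are genuinely different (the dyadic function is $B^+_{\delta^+(\ve)}$ with $\delta^+(\ve)>\ve$, precisely because a \emph{larger} concavity domain must absorb the errant chords). Your argument, as written, would ``prove'' the false statement $\bel{B}^{d+}_\ve = B^+_\ve$.

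The missing ingredient is the paper's Lemma~\ref{l4c}: an \emph{adaptive, non-dyadic} splitting $I=I_-\cup I_+$ chosen (depending on $\varphi$ and the current interval) so that $[x^-,x^+]$ lies inside a slightly enlarged domain $\Omega_{\ve_1}$ with $\ve_1>\ve$, with splitting ratios uniformly bounded away from $0$ and $1$. The induction then yields $\bel{B}^\pm_\ve$ sandwiched by $B^\pm_{\ve_1}$, and one passes $\ve_1\downarrow\ve$ at the end. Without this device (or a substitute for it), there is no route from local concavity of $B^+_\ve$ on $\Omega_\ve$ to the desired upper bound on $\bel{B}^+_\ve$; you also omit the $\ve_1\to\ve$ limiting step entirely, and the cutoff argument needed to extend from $\varphi$ bounded above to general $\varphi$. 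Your Steps~2 and~3 are in the right spirit (the paper's explicit extremizer $\varphi_{a,b,\gamma}$ is the $\log$-profile limit of the cascade you describe, and taking $\gamma=\ve\ge1$ indeed gives $\av{e^\varphi}I=\infty$), but Step~1 as proposed does not go through for the continuous case and must be replaced by the adaptive splitting argument.
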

\begin{thmd}
\label{t2d} If $0\le\ve<\sqrt2\log 2,$ then 
$$
\bel{B}^{d+}_\ve(x)=B^+_{\delta^+(\ve)}(x),
$$
where $\delta=\delta^+(\ve)$ is the unique
solution of the equation
\eq[t11]{
(1-\rtde)e^{\rtde}\left(2-e^{\ve/\sqrt2}\right)-(1-\delta)e^{\delta-\ve/\sqrt2}=0; 
}
if
$\ve\ge \sqrt2\log 2,$ then
$$
\bel{B}^{d+}_{\ve}(x)=
\begin{cases}
e^{x_1}&\hbox{if}~x_2=x_1^2\\
+\infty&\hbox{if}~x_2>x_1^2.
\end{cases}
$$
In addition, 
$$
\bel{B}_\ve^{d-}(x)=B^-_{\delta^-(\ve)}(x),\qquad\forall \ve\ge0, 
$$
where $\delta=\delta^-(\ve)$ is the unique solution of the equation 
\eq[t11d]{
(1+\rtde)e^{-\rtde}\left(2-e^{-\ve/\sqrt2}\right)-(1+\delta)e^{-\delta+\ve/\sqrt2}=0,~~~\ve\ge0.
}
\end{thmd}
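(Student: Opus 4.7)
The plan is to follow the template outlined in the introduction: propose the candidates from the continuous family \eqref{t12}, verify the appropriate midpoint concavity/convexity on $\Oe$, run a dyadic Bellman induction to get one half of each estimate, and exhibit explicit extremal $\BMO^d_\ve(I)$ functions to realize the other.

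First I would show that $\delta^+(\ve)$ and $\delta^-(\ve)$ are well-defined. Viewing the left-hand side of \eqref{t11} as a function of $\delta\in[\ve,1)$, I would establish its monotonicity using Proposition~\ref{pr1} and check its sign at the endpoints, concluding that a unique root exists in $[\ve,1)$ precisely when $\ve<\sqrt2\log2$; in particular $\delta^+(\ve)\ge\ve$, so $\Oe\subset\Omega_{\delta^+(\ve)}$ and the candidate $B^+_{\delta^+(\ve)}$ is defined on all of $\Oe$. An analogous analysis of \eqref{t11d} yields a unique $\delta^-(\ve)$ for every $\ve\ge0$.

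The main obstacle is the dyadic concavity of the candidate: for every $x,x^\pm\in\Oe$ with $x=\tfrac12(x^++x^-)$,
\begin{equation*}
B^+_{\delta^+(\ve)}(x)\ge\tfrac12\bigl(B^+_{\delta^+(\ve)}(x^+)+B^+_{\delta^+(\ve)}(x^-)\bigr).
\end{equation*}
The concavity established in Theorem~\ref{t2c} on $\Omega_\delta$ is only concavity along the characteristic direction (tangent to the lower parabola), not joint concavity in $(x_1,x_2)$; this is exactly why $\delta^+(\ve)$ must exceed $\ve$. I would identify the critical chords as those joining a point $(x_1,x_1^2+\ve^2)$ on the upper boundary of $\Oe$ to a point $(x_1-\ve/\sqrt2,(x_1-\ve/\sqrt2)^2)$ on the lower parabola, with the symmetric endpoint $(x_1+\ve/\sqrt2,(x_1+\ve/\sqrt2)^2+\ve^2)$ on the upper boundary (together with its mirror configuration). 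Using $B^+_\delta(y,y^2)=e^y$ and equating the two sides of the midpoint inequality on this critical chord recovers exactly \eqref{t11}, thereby singling out $\delta^+(\ve)$. The remaining work is a case analysis verifying that (a) for $\delta=\delta^+(\ve)$ midpoint concavity holds with equality on the critical chords and strictly on all others in $\Oe$, using the characteristic concavity inside $\Omega_{\delta^+(\ve)}$ together with boundary comparisons, and (b) for $\delta<\delta^+(\ve)$ the inequality fails on some critical chord, confirming sharpness.

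With midpoint concavity in hand, the upper bound $\bel{B}^{d+}_\ve\le B^+_{\delta^+(\ve)}$ follows from the standard dyadic Bellman induction: for $\varphi\in\BMO^d_\ve(I)$, iterating midpoint concavity across $n$ dyadic generations and then applying the Lebesgue differentiation theorem as $n\to\infty$, together with the boundary value $B^+_{\delta^+(\ve)}(y,y^2)=e^y$ and a truncation argument to handle integrability, yields the bound. For the reverse inequality I would iterate the extremal split identified above: at each generation, place $\varphi$ equal to the appropriate constant on one dyadic half (forcing its averages onto the lower parabola) and recurse on the other half. Summing the resulting geometric series gives $\av{e^\varphi}{I}=B^+_{\delta^+(\ve)}(x)$ exactly; the series converges iff $\ve<\sqrt2\log2$, matching $\ve^d_0$. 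For $\ve\ge\sqrt2\log2$ the same recursion produces dyadic $\BMO^d_\ve$ functions with $\av{e^\varphi}{I}$ arbitrarily large whenever $x_2>x_1^2$, while on the bottom parabola $\varphi$ is forced to be constant. The proof for $\bel{B}^{d-}_\ve$ is parallel, with midpoint convexity in place of concavity and \eqref{t11d} in place of \eqref{t11}; the $1+t$ factor from Proposition~\ref{pr1} replaces $1-t$, so the analogous geometric series converges for all $\ve\ge0$, explaining the absence of a threshold for the lower Bellman function.
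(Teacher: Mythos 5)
Your overall architecture matches the paper's: establish midpoint concavity of the candidate $B^+_{\delta^+(\ve)}$ on $\Oe$, run a dyadic Bellman induction to get the upper bound, and construct an explicit extremal function for the lower bound. You have also correctly identified the critical chord --- a segment whose midpoint $(x_1,x_1^2+\ve^2)$ and one endpoint $(x_1+\ve/\sqrt2,(x_1+\ve/\sqrt2)^2+\ve^2)$ lie on the upper parabola of $\Oe$ with the other endpoint $(x_1-\ve/\sqrt2,(x_1-\ve/\sqrt2)^2)$ on the lower one --- and equality on it does indeed reproduce \eqref{t11}. That is precisely the configuration the paper's Proposition~\ref{pr5} isolates.

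The genuine gap is in item (a) of your "case analysis." Proving that the midpoint inequality holds for \emph{all} admissible chords once $\delta=\delta^+(\ve)$ satisfies \eqref{t11} is the heart of Lemma~\ref{l3d} and is not a corollary of "characteristic concavity plus boundary comparisons." After the change of variables $a,a_\pm,\theta$ (with constraint $a_-^2+a_+^2=2a^2+2\theta^2$), one must minimize the functional $f_\delta(a,a_-,a_+,\theta)=2(1-a)e^a-(1-a_-)e^{-\theta+a_-}-(1-a_+)e^{\theta+a_+}$ over the region where $a,a_\pm\in[\rtde,\delta]$. The paper does this in two stages: for fixed $a$, a Lagrange-multiplier computation over the interior and each edge of a curvilinear quadrilateral in the $(a_-,a_+)$-plane (Proposition~\ref{pr4}), where the interior and two edges yield $f=0$ or $f\ge0$ by nontrivial manipulations and the only dangerous spot is the vertex $a_-=\delta,a_+=\rtde$; then a one-variable minimization in $\theta$ over $[0,\ve/\sqrt2]$ showing the minimum is at the endpoint $\theta=\ve/\sqrt2$ (Proposition~\ref{pr5}), which requires handling an interior critical point $\theta_*$ by a separate sign argument using Proposition~\ref{pr1} and Observation~\ref{obs2}. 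None of this is an exercise in convexity on $\Od$; the critical configurations typically involve segments that exit $\Omega_{\delta^+(\ve)}$, which is exactly why the brute-force bound $\delta\le\tfrac{3}{2\sqrt2}\ve$ (Proposition~\ref{pr2}) is sub-optimal and a finer argument is needed. As written, your proposal asserts the conclusion of this analysis without supplying it.

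A secondary, smaller gap: your description of the extremal function --- "place $\varphi$ equal to the appropriate constant on one dyadic half and recurse on the other half" --- only produces the extremal function for points on the upper boundary $x_2=x_1^2+\ve^2$ (this is the function $\varphi_0$ of Figure~\ref{fi}). For interior $x$ the recursion cannot always push to the lower parabola (the recursion point would leave $\Oe$); one must alternate between a constant branch and a scaled copy of $\varphi_0$ branch, with the branch choice driven by the binary expansion of the non-dyadic splitting parameter $\alpha=(\delta-r_2)/(\delta-r_1)$. The paper implements this as a rearrangement \eqref{7.1} of the na\"ive non-dyadic construction. You would need to spell this out to get the lower bound $\bel{B}^{d+}_\ve\ge B^+_{\delta^+(\ve)}$ at interior points, and then verify that the rearranged function actually stays in $\BMO^d_\ve$, which the paper does by a three-case check over dyadic subintervals.
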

Theorems~\ref{t1c} and \ref{t1d} immediately follow from the Theorems~\ref{t2c} and \ref{t2d}, respectively. Indeed,
Proposition~\ref{pr1} implies that $B^+_\ve$ and $B^+_\delta$ assume their maxima on the upper
boundary of $\Oe$, i.~e. when $x_2=x_1^2+\ve^2;$ so we have
$$
B^+_\ve(x)\le \frac{\ e^{-\ve}}{1-\ve}\;e^{x_1}
$$
and
$$
B^+_{\delta^-(\ve)}(x)\le\frac{C(\delta)}{C(\rtde\;)}\;e^{x_1}=
\frac{e^{-\frac\ve{\sqrt2}}}{2-e^{\frac\ve{\sqrt2}}}\;e^{x_1}
$$
giving~\eqref{t8} and~\eqref{t10} with the sharp constants~\eqref{t9} and~\eqref{t9.5}.

We will first consider the continuous case and then the dyadic one.
\section{The continuous case}
We split the proof of Theorem 2c into two parts.
\begin{lemc}
\label{l1c}
For every $x\in\Oe,$
\eq[1]{
\bel{B}^+_{\ve}(x)\ge
B^+_{\ve}(x);\qquad\bel{B}^-_{\ve}(x)\le B^-_{\ve}(x),
}
where $0<\ve<1$ for $B^+$ and
$\ve>0$ for $B^-.$
\end{lemc}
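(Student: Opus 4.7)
Lemma~\ref{l1c} is the ``existence of extremals'' half of Theorem~\ref{t2c}: for each $x\in\Oe$ I must exhibit $\varphi\in\BMO_\ve(I)$ with $\av{\varphi}I=x_1$, $\av{\varphi^2}I=x_2$, and $\av{e^\varphi}I$ equal to $B^+_\ve(x)$ in the ``$+$'' case (resp.\ $B^-_\ve(x)$ in the ``$-$'' case). The two halves are structurally symmetric, so I describe the upper-bound ansatz in detail; the lower-bound ansatz is the same construction with the sign of the log building block flipped.

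The elementary building block is the logarithmic BMO function. For $I=(0,1]$, the function $\psi(s)=-\ve\log s$ satisfies $\av{\psi}I=\ve$, $\av{\psi^2}I=2\ve^2$, and $\av{e^\psi}I=(1-\ve)^{-1}$ when $\ve<1$ (but $+\infty$ when $\ve\ge 1$, which is precisely the origin of the threshold $\ve_0=1$). Scale invariance of $\log$ reduces the variance on any subinterval $(\alpha,\beta)\subset(0,1]$ with $\alpha>0$ to the variance of $\log r$ on $(1,\beta/\alpha)$, which a short direct computation shows is at most $1$; on left-anchored subintervals $(0,t]$ the variance equals $1$ exactly. In either case $\psi\in\BMO_\ve(I)$.

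Given $x\in\Oe$, set $\rho=\sqrt{\ve^2+x_1^2-x_2}\in[0,\ve]$, $u=x_1+\rho$, $t_1=1-\rho/\ve$, and define
$$
\varphi(s)=\begin{cases}u-\ve-\ve\log(s/t_1),&0<s\le t_1,\\ u-\ve,&t_1<s\le 1,\end{cases}
$$
a translated rescale of $\psi$ on $(0,t_1]$ joined continuously to a constant on $(t_1,1]$. Using $\av{\log s}{(0,t)}=\log t-1$ and $\av{\log^2 s}{(0,t)}=\log^2 t-2\log t+2$, a short computation on each piece yields $\av{\varphi}I=x_1$, $\av{\varphi^2}I=x_2$, and
$$
\av{e^\varphi}I=\frac{1-\rho}{1-\ve}\,e^{x_1+\rho-\ve}=B^+_\ve(x).
$$
For the lower-bound half one replaces $-\ve\log(s/t_1)$ by $+\ve\log(s/t_1)$ and adjusts the constant value accordingly; no smallness of $\ve$ is needed there because $s^{\,\ve}$ is always integrable on $(0,1)$.

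The only nontrivial verification is that $\varphi\in\BMO_\ve(I)$. Subintervals lying entirely in the constant piece have variance $0$, and subintervals lying entirely in the log piece have variance at most $\ve^2$ by the scale-invariance observation. For a straddling subinterval $J=[\alpha,\beta]$ with $\alpha<t_1<\beta$, the standard conditional variance decomposition reads
$$
\operatorname{Var}_J(\varphi)=\frac{p_1\operatorname{Var}_{[\alpha,t_1]}(\varphi)+p_2\operatorname{Var}_{[t_1,\beta]}(\varphi)}{p_1+p_2}+\frac{p_1p_2}{(p_1+p_2)^2}(\bar A_1-\bar A_2)^2,
$$
with $p_1=t_1-\alpha$, $p_2=\beta-t_1$ and $\bar A_i$ the sub-averages. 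Setting $\alpha=t_1r_0$ with $r_0\in[0,1)$, a short calculation presents the mean gap as $\bar A_1-\bar A_2=\ve\,g(r_0)$ where $g(r_0)=(1-r_0+r_0\log r_0)/(1-r_0)$ lies in $[0,1]$, while the log-piece sub-variance equals $\ve^2 v$ with $v\in[0,1]$. The desired bound $\operatorname{Var}_J(\varphi)\le\ve^2$ then reduces to the manifestly true
$$
p_2^2+p_1^2(1-v)+p_1p_2(2-v-g^2)\ge 0.
$$
I expect this bookkeeping step --- elementary in content but needing care to set up --- to be the main technical obstacle; once it is in place, both halves of~\eqref{1} follow at once.
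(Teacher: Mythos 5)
Your construction is, up to the change of notation $(a,b,\gamma)\leftrightarrow(t_1,u-\ve,\ve)$, exactly the test function $\varphi_{a,b,\gamma}$ the paper uses: a constant joined to a scaled logarithm with the jump location and the constant level tuned so that the averages hit $(x_1,x_2)$, and the $\gamma=\ve\ge1$ degeneracy identified as the source of the threshold $\ve_0=1$. Your BMO-norm verification is organized via the conditional (within/between) variance decomposition and a normalized ratio $r_0=\alpha/t_1$, whereas the paper expands $\av{\varphi^2}{[c,d]}-\av{\varphi}{[c,d]}^2$ directly and completes a square; both routes reduce to the same elementary inequality, and the final bound $p_2^2+p_1^2(1-v)+p_1p_2(2-v-g^2)\ge0$ with $v,g\in[0,1]$ is a correct and clean way to package the paper's Case~2 estimate. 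This is essentially the paper's proof.
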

We prove each of inequalities \eqref{1} by explicitly finding a function $\varphi$ for every point $x\in
\Oe$ such that $\left(\av{\varphi}I,\av{\varphi^2}I\right)=(x_1,x_2)$ and
$$
\av{e^\varphi}I=B_{\ve}(x_1,x_2).
$$
Here $B_\ve$ stands for $B^+_\ve$ or $B^-_\ve$ and the result will then follow from the definition of $\bel{B}^\pm_\ve.$
\begin{proof}
Since $x_2=x_1^2$ occurs if and only if $\varphi=x_1=\text{const},$ it is clear
that $\bel{B}^\pm_0(x)=B^\pm_0(x)=e^{x_1}.$ So we only need to consider $\ve>0.$

Take $I=[0,1],~a\in(0,1],~b\in\mathbb{R},~\gamma\in\mathbb{R}\backslash\{0\}.$ Let
$$
\varphi_{a,b,\gamma}(t)=
\begin{cases}
\gamma\log\frac{a}t+b~&\hbox{for}~0\le t\le a
\\
b~&\hbox{for}~a\le t\le1.
\end{cases}
$$
Let us calculate the $\BMO$ norm of $\varphi_{a,b,\gamma}.$ To simplify calculations, let
$l(t)=\log(a/t)$ and observe that
$$
\int \left(\gamma l(t)+b\right)\,dt=(\gamma+b)t+\gamma tl(t)+C
$$
and
$$
\int \left(\gamma l(t)+b\right)^2\,dt=(2\gamma^2+2\gamma b+b^2)t+\gamma^2
tl^2(t)+2\gamma(b+\gamma)tl(t)+C.
$$
Take an interval $[c,d]\subset I.$ We have the following trichotomy \ben \item $0\le c<d\le
a\le1.$ In this case
$$
\av{\varphi}{[c,d]}=\gamma+b+\gamma\frac{dl(d)-cl(c)}{d-c}
$$
and
$$
\av{\varphi^2}{[c,d]}=2\gamma^2+2\gamma b+b^2+2\gamma(b+\gamma)\frac{dl(d)-cl(c)}{d-c}
+\gamma^2\frac{dl^2(d)-cl^2(c)}{d-c}.
$$
Therefore,
\begin{align*}
\av{\varphi^2}{[c,d]}-\av{\varphi}{[c,d]}^2&=\gamma^2+\frac{\gamma^2}{(d-c)^2}
\left[(dl^2(d)-cl^2(c))(d-c)-(dl(d)-cl(c))^2\right]\\
&=\gamma^2-\frac{\gamma^2cd}{(d-c)^2}\left[l(d)-l(c)\right]^2\le\gamma^2.
\end{align*}
\item
$0\le c\le a\le d\le1.$ In this case
$$
\av{\varphi}{[c,d]}=\frac{-\gamma cl(c)+(b+\gamma)(a-c)+b(d-a)}{d-c}
=\gamma\frac{-cl(c)+a-c}{d-c}+b
$$
and
\begin{align*}
\av{\varphi^2}{[c,d]}&=\frac{(2\gamma^2+2\gamma b+b^2)(a-c)
+\gamma^2(-cl^2(c))+2\gamma(b+\gamma)(-cl(c))+b^2(d-a)}{d-c}\\
&=\frac{\gamma}{d-c}\left[2(b+\gamma)(a-c)-\gamma cl^2(c)-2(b+\gamma)cl(c)\right]+b^2,
\end{align*}
so
\begin{align*}
\av{\varphi^2}{[c,d]}-\av{\varphi}{[c,d]}^2&=
\frac{\gamma^2}{d-c}\left[2(a-c)-cl^2(c)-2cl(c)\right]\\
&-\frac{\gamma^2}{(d-c)^2}\left[(a-c)^2-2c(a-c)l(c)+c^2l^2(c)\right]\\
&=\frac{\gamma^2}{(d-c)^2}\left[2(a-c)(d-c)-(a-c)^2-cdl^2(c)-2c(d-a)l(c)\right]\\
&\le\gamma^2\frac{a-c}{d-c}\left[2-\frac{a-c}{d-c}\right]\le\gamma^2,
\end{align*}
since $d\ge a$ and $\log(a/c)\ge0$ if $a\ge c.$ The last inequality follows from the fact
that the vertex of the parabola $(x,x(2-x))$ is at $(1,1).$ \item $0\le a\le c<d\le 1.$ In
this case,
$$
\av{\varphi^2}{[c,d]}-\av{\varphi}{[c,d]}^2=b^2-b^2=0.
$$
\een
We have shown that $\varphi_{a,b,\gamma}\in \BMO_{|\gamma|}(I).$ Also, using Case 2
above with $c=0,$ $d=1,$ we get $\av{\varphi_{a,b,\gamma}}I=\gamma a+b$ and
$\av{\varphi^2_{a,b,\gamma}}I=2\gamma^2a+2\gamma ab+b^2.$ Finally,
$$
\av{e^{\varphi_{a,b,\gamma}}}I=\int_0^ae^b\left(\frac at\right)^\gamma dt+\int_a^1e^b\,dt=
\begin{cases}
\ds\frac{1-\gamma+a\gamma}{1-\gamma}\;e^b\quad&\hbox{if}~\gamma<1\\
\qquad\infty~&\hbox{if}~\gamma\ge1.
\end{cases}
$$
Since $\bel{B}_{\ve}(x_1,x_1^2)=B_{\ve}(x_1,x_1^2)=e^{x_1}$ for all $\ve,$ we only need to
consider the points $x\in\Oe$ with $x_2>x_1^2.$ Then we can set
$a=1-\frac1{|\gamma|}\sqrt{\gamma^2+x_1^2-x_2}$ and $b=x_1-\gamma a,$ which yields
$\av{\varphi_{a,b,\gamma}}I=x_1,$ $\av{\varphi^2_{a,b,\gamma}}I=x_2.$ Now, if we put
$\gamma=\ve\ge1,$ we get $\bel{B}^+_{\ve}(x)=\infty.$ For $\gamma=\ve\in(0,1),$ we get
$$
\bel{B}^+_{\ve}(x)\ge\av{e^{\varphi_{a,b,\gamma}}}I=
\frac{1-\sqrt{\ve^2+x_1^2-x_2}}{1-\ve}\exp\left(x_1+\sqrt{\ve^2+x_1^2-x_2}-\ve\right)=
B^+_{\ve}(x).
$$
If we set $\gamma=-\ve\in(-\infty,0),$ we obtain
$$
\bel{B}^-_{\ve}(x)\le\av{e^{\varphi_{a,b,\gamma}}}I=
\frac{1+\sqrt{\ve^2+x_1^2-x_2}}{1+\ve}\exp\left(x_1-\sqrt{\ve^2+x_1^2-x_2}+\ve\right)=
B^-_{\ve}(x).\mbox{\qedhere}
$$
\end{proof}
\begin{lemc}
\label{l2c}
For every $x\in\Oe,$
\eq[t15]{
\bel{B}^+_{\ve}(x)\le
B^+_{\ve}(x);\qquad\bel{B}^-_{\ve}(x)\ge B^-_{\ve}(x),
}
where $0<\ve<1$ for $B^+$ and $\ve>0$ for $B^-.$
\end{lemc}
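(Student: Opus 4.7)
The plan is to prove the reverse inequalities in \eqref{t15} by Bellman induction: verify a concavity-type property of $B^+_\ve$ and a convexity-type property of $B^-_\ve$ on the Bellman domain, then iterate over dyadic partitions of $I$ and pass to the limit. The \emph{main inequality} to establish is this: whenever $\varphi\in\BMO_\ve(I)$ and $I=I^-\cup I^+$ with $\alpha^\pm=|I^\pm|/|I|$, setting $x=(\av{\varphi}I,\av{\varphi^2}I)$ and $x^\pm=(\av{\varphi}{I^\pm},\av{\varphi^2}{I^\pm})$, we have $x,x^\pm\in\Oe$, $x=\alpha^-x^-+\alpha^+x^+$, and
$$
B^+_\ve(x)\ge\alpha^-B^+_\ve(x^-)+\alpha^+B^+_\ve(x^+),\qquad B^-_\ve(x)\le\alpha^-B^-_\ve(x^-)+\alpha^+B^-_\ve(x^+).
$$
Introducing the coordinate $u=\sqrt{\ve^2+x_1^2-x_2}\in[0,\ve]$, in which $B^+_\ve=\frac{1-u}{1-\ve}e^{x_1+u-\ve}$ and $B^-_\ve=\frac{1+u}{1+\ve}e^{x_1-u+\ve}$, I would compute the Hessian and reduce these three-point inequalities to elementary algebraic facts; the sign restrictions $\ve<1$ (resp.\ $\ve\ge 0$) enter precisely at this step.

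Given the main inequality, I would then iterate. For $\varphi\in\BMO_\ve(I)$, take the uniform dyadic partition $\mathcal{D}_n$ of $I$ into $2^n$ equal subintervals. Since $\varphi|_J\in\BMO_\ve(J)$ for every $J\subset I$, applying the main inequality at each parent-children splitting and summing telescopically yields
$$
B^+_\ve(x(I))\ge\frac{1}{|I|}\int_I B^+_\ve(x(J_n(t)))\,dt,\qquad B^-_\ve(x(I))\le\frac{1}{|I|}\int_I B^-_\ve(x(J_n(t)))\,dt,
$$
where $J_n(t)\in\mathcal{D}_n$ is the piece containing $t$. By Lebesgue differentiation, $x(J_n(t))\to(\varphi(t),\varphi(t)^2)$ for a.e.\ $t\in I$; combined with continuity of $B^\pm_\ve$ on $\Oe$ and the boundary identity $B^\pm_\ve(y_1,y_1^2)=e^{y_1}$, the integrand converges a.e.\ to $e^{\varphi(t)}$. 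For $B^+_\ve$, Fatou's lemma applied to the nonnegative sequence yields $B^+_\ve(x(I))\ge\av{e^\varphi}I$. For $B^-_\ve$, the uniform bound $B^-_\ve(x_1,x_2)\le\frac{e^\ve}{1+\ve}e^{x_1}$---immediate from Proposition~\ref{pr1} via monotonicity in $u$---provides the control required to pass the reverse limit (truncating $\varphi$ and taking a monotone limit if necessary), giving $B^-_\ve(x(I))\le\av{e^\varphi}I$. Taking the supremum (resp.\ infimum) over admissible $\varphi$ closes both bounds.

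The main obstacle is the main inequality itself, because the domain $\Oe$ is not convex: although $x,x^-,x^+\in\Oe$, the chord $[x^-,x^+]$ may exit $\Oe$ through the upper parabolic boundary $x_2=x_1^2+\ve^2$, so pointwise concavity/convexity of $B^\pm_\ve$ inside $\Oe$ does not automatically deliver the three-point inequality. The clean workaround is to use the identity
$$
x_2-x_1^2=\alpha^-\bigl(x_2^--(x_1^-)^2\bigr)+\alpha^+\bigl(x_2^+-(x_1^+)^2\bigr)+\alpha^-\alpha^+\bigl(x_1^--x_1^+\bigr)^2,
$$
which parametrizes admissible triples by the $u$-coordinates at the endpoints and the gap $x_1^--x_1^+$, reducing the three-point inequality to one in scalar parameters and bypassing any need to extend $B^\pm_\ve$ beyond $\Oe$. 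Additional care is required near the critical value $\ve=1$, where $B^+_\ve$ develops the singular prefactor $(1-\ve)^{-1}$ on the upper boundary---precisely the locus from which the sharpness of $\ve_0=1$ originates.
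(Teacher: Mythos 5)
Your Bellman-induction skeleton and the limiting arguments (Fatou for $B^+$, domination/truncation for $B^-$) are sound \emph{given} your ``main inequality,'' but the main inequality itself is asserted rather than proved, and that is precisely where the difficulty lies. You correctly identify the obstacle --- the chord $[x^-,x^+]$ may leave $\Oe$ through the upper boundary, so the Hessian computation inside $\Oe$ does not deliver the three-point bound --- but the proposed workaround (the algebraic identity $x_2-x_1^2=\alpha^-\bigl(x_2^--(x_1^-)^2\bigr)+\alpha^+\bigl(x_2^+-(x_1^+)^2\bigr)+\alpha^-\alpha^+(x_1^--x_1^+)^2$) merely re-parametrizes the triple; it is an identity, not an inequality, and it does not show that the concavity/convexity of $B^\pm_\ve$ ``integrates'' across a chord that exits $\Oe$. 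In fact your main inequality, taken literally with $\alpha^\pm=\tfrac12$ (which is what uniform dyadic partitioning forces), would imply the dyadic Bellman function is bounded by $B^+_\ve$; but Theorem~\ref{t2d} shows $\bel{B}^{d+}_\ve=B^+_{\delta^+(\ve)}$ with $\delta^+(\ve)>\ve$, so the midpoint concavity of $B^+_\ve$ genuinely fails for some admissible dyadic triples. Your claim is restricted to triples arising from continuous $\BMO_\ve$ functions, which is a strictly smaller class, but you give no argument that actually exploits the continuous (as opposed to dyadic) constraint.

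The paper's proof is structured precisely to avoid this: it enlarges the domain to $\Omega_{\ve_1}$ with $\ve_1>\ve$ and proves local concavity of $B^+_{\ve_1}$ there (Lemma~\ref{l3c}), and then --- this is the crux --- uses the adaptive, non-uniform splitting of Lemma~\ref{l4c} to choose, for each $\varphi$ and each interval, a splitting parameter $\alpha_+$ (uniformly bounded away from $0$ and $1$) so that the chord $[x^-,x^+]$ stays inside $\Omega_{\ve_1}$. Only then does the local concavity yield the three-point inequality. This adaptive split is where the continuous-$\BMO$ structure is genuinely used: you need the freedom to split at an arbitrary interior point, which is exactly what the dyadic setting forbids (and why the dyadic answer is different). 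Taking $\ve_1\to\ve$ at the end gives the statement for $B^\pm_\ve$. Your uniform-dyadic scheme forgoes that freedom, and without it the inductive step has no valid justification.
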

\begin{proof}
To establish \eqref{t15}, we first prove that $\bel{B}^+_{\ve}(x)\le B^+_{\ve_1}(x),\;
\bel{B}^-_{\ve}(x)\ge B^-_{\ve_1}(x)\;\forall \ve_1>\ve,\;\forall x\in\Oe,$ and take the
limit as $\ve_1\to\ve.$ (Observe that $B^+_\ve$ and $B^-_\ve$ are continuous in $\ve.$) We
need the following two results; their proofs will be postponed until the end of the proof
of Lemma~\ref{l2c}. 
\begin{lemc}
\label{l3c}
The function $B^+_{\ve}$ is locally concave and the function $B^-_{\ve}$
locally convex in $\Oe,$ i.e.
\begin{align}
\notag B^+_{\ve}(\alpha_-x^-+\alpha_+x^+)\ge \alpha_-B_{\ve}(x^-)+\alpha_+B_{\ve}(x^+)
\label{t16}
\\
B^-_{\ve}(\alpha_-x^-+\alpha_+x^+)\le \alpha_-B_{\ve}(x^-)+\alpha_+B_{\ve}(x^+)
\end{align}
for any straight-line segment with the endpoints $x^{\pm}$ that lies entirely in $\Oe$ and
any pair of nonnegative numbers $\alpha_\pm$ such that $\alpha_-+\alpha_+=1.$
\end{lemc}
\begin{lemc}
\label{l4c}
Fix $\ve.$ Take any $\ve_1>\ve.$ Then for every interval $I$ and every $\varphi\in
\BMO_{\ve}(I),$ there exists a splitting $I=I_-\cup I_+$ such that the whole straight-line
segment with the endpoints
$x^{\pm}=\left(\av{\varphi}{I_{\pm}},\av{\varphi^2}{I_{\pm}}\right)$ is inside
$\Omega_{\ve_1}.$ Moreover\textup, the splitting parameter $\alpha_+=|I_+|/|I|$ can be chosen
uniformly \textup(with respect to $\varphi$ and $I$\textup) separated from $0$ and $1.$
\end{lemc}
Assuming these lemmas for the moment, take $\varphi\in \BMO_{\ve}(I).$ Take any
$\ve_1>\ve.$ Observe that $\varphi\in \BMO_{\ve}(J)$ for any subinterval $J$ of $I.$ Split
$I$ according to the rule from Lemma~\ref{l4c}. Let $I^{0,0}=I,\quad I^{1,0}=I_-,\quad I^{1,1}=I_+.$
Now split $I_-$ and $I_+$ according to the rule from Lemma~\ref{l4c} and continue this splitting
process. By $I^{n,m}$ we denote the intervals of the $n\!$-th generation, as follows:
$I^{n,2k}=I_-^{n-1,k}$ and $I^{n,2k+1}=I_+^{n-1,k},$ so the second index runs from $0$ to
$2^n-1.$ We call the quasi-dyadic lattice so obtained $D_\varphi=D_\varphi(I).$ Let
$x^{n,m}=\left(\av{\varphi}{I^{n,m}},\av{\varphi^2}{I^{n,m}}\right).$ Since Lemma~\ref{l4c} provides
for the value of $\alpha_+$ uniformly separated from 0 and 1 on every step, we have
$$
\max_{k=0,1,\dots,2^n-1}\left\{|I^{n,k}|\right\}\longrightarrow0~\text{as}~n\to\infty.
$$
With this notation, for a given $\varphi\in\BMO_\ve(J)$ let us now introduce two sequences of
step functions $\varphi_n(s)=x_1^{n,k}$ and $s_n(s)=x_2^{n,k}-(x_1^{n,k})^2$ for $s\in
I^{n,k}.$ Note that $\varphi_n-\av\varphi I$ is the partial sum of the expansion of the
function $\varphi-\av\varphi I$ with respect to the orthonormal family of the generalized
Haar functions related to $D_\varphi(I)$
$$
h_J=
\begin{cases}
\phantom{-}\left(\frac{|J_+|}{|J|\,|J_-|}\right)^{1/2}&\text{on}~J_-,\\
&\\
-\left(\frac{|J_-|}{|J|\,|J_+|}\right)^{1/2}&\text{on}~J_+.
\end{cases}
$$
It is clear that under the assumption that the lengths of intervals $I^{n,k}$ go to zero as
$n\to\infty,$ the family $\{h_J\}_{J\in D_\varphi}$ forms a basis in $L_0^2(I)=\{\psi\in
L^2(I)\colon\av{\psi}I=0\}.$  So $\varphi_n\to\varphi$ in the $L^2\!$-norm and since
\begin{align*}
\|\varphi-\varphi_n\|^2_{L^2}&=\int_I|\varphi(s)-\varphi_n(s)|^2ds
=\sum_{I^{n,k}}\int_{I^{n,k}}|\varphi(s)-\varphi_n(s)|^2ds
\\
&=\sum_{I^{n,k}}|I^{n,k}|\bigl(x_2^{n,k}-(x_1^{n,k})^2\bigr)= \int_I s_n(s)\,ds,
\end{align*}
we can choose a subsequence $n_j$ such that $\varphi_{n_j}(s)\to\varphi(s)$ and
$s_{n_j}(s)\to0$ almost everywhere on $I.$

Now, using the statement about $B^+$ from Lemma~\ref{l3c} repeatedly, we get
\begin{eqnarray}
\nonumber B^+_{\ve_1}(x^{0,0})&\ge&\ds
\frac{|I^{1,0}|}{|I^{0,0}|}B_{\ve_1}(x^{1,0})+\frac{|I^{1,1}|}{|I^{0,0}|}B^+_{\ve_1}(x^{1,1})\\
\nonumber &&\\
\nonumber &\ge&\ds
\frac{|I^{1,0}|}{|I^{0,0}|}\frac{|I^{2,0}|}{|I^{1,0}|}B^+_{\ve_1}(x^{2,0})+
\frac{|I^{1,0}|}{|I^{0,0}|}\frac{|I^{2,1}|}{|I^{1,0}|}B^+_{\ve_1}(x^{2,1})\\
\nonumber &&\\
\label{t17}&&+\ds \frac{|I^{1,1}|}{|I^{0,0}|}\frac{|I^{2,2}|}{|I^{1,1}|}B^+_{\ve_1}(x^{2,2})+
\frac{|I^{1,1}|}{|I^{0,0}|}\frac{|I^{2,3}|}{|I^{1,1}|}B^+_{\ve_1}(x^{2,3})\\
\nonumber &&\\
\nonumber &=&\ds
\frac{|I^{2,0}|}{|I^{0,0}|}B^+_{\ve_1}(x^{2,0})+\frac{|I^{2,1}|}{|I^{0,0}|}B^+_{\ve_1}(x^{2,1})+
\frac{|I^{2,2}|}{|I^{0,0}|}B^+_{\ve_1}(x^{2,2})+\frac{|I^{2,3}|}{|I^{0,0}|}B^+_{\ve_1}(x^{2,3})\\
\nonumber &&\\
\nonumber &\ge&\ds \frac1{|I^{0,0}|}\sum_{k=0}^{2^n-1}|I^{n,k}|B^+_{\ve_1}(x^{n,k})
=\frac1{|I|}\int_I e^{\varphi_n(s)}b_+(s_n(s))\,ds,
\end{eqnarray}
where
$$
b_+(t)=\frac{1-\sqrt{\ve_1^2-t}}{1-\ve_1}\exp\left(\sqrt{\ve_1^2-t}-\ve_1\right).
$$
The last equality is just the statement $B^+_{\ve_1}(x^{n,k})=e^{\varphi_n(s)}b_+(s_n(s)),$
for $s\in I^{n,k}.$

Likewise, applying the corresponding statement from Lemma~\ref{l3c} repeatedly, we obtain
\eq[t17-]{
B^-_{\ve_1}(x^{0,0})\le
\frac1{|I^{0,0}|}\sum_{k=0}^{2^n-1}|I^{n,k}|B^-_{\ve_1}(x^{n,k})=\frac1{|I|}\int_I
e^{\varphi_n(s)}b_-(s_n(s))\,ds.
}
Here
$$
b_-(t)=\frac{1+\sqrt{\ve_1^2-t}}{1+\ve_1}\exp\left(-\sqrt{\ve_1^2-t}+\ve_1\right).
$$
For functions $\varphi$ bounded from above we can pass to the limit in \eqref{t17} and
\eqref{t17-} using the dominated convergence theorem. Therefore, for such functions
$\varphi\in\BMO_\ve(J)$ we have the double inequality 
\eq[estim]{
B^-_{\ve_1}(\av{\varphi}I,\av{\varphi^2}I)\le \frac1{|I|}\int_I e^{\varphi(s)}\,ds\le
B^+_{\ve_1}(\av{\varphi}I,\av{\varphi^2}I).
}
It remains to approximate an arbitrary function $\psi\in \BMO_\ve(I)$ by its cut-offs in a
standard manner; namely, we take
$$
\psi_m(s)=
\begin{cases}
\psi(s)\quad&\text{if }\psi(s)\le m
\\
m&\text{if }\psi(s)>m.
\end{cases}
$$
If we denote $J_1=\{s\in J\colon \psi(s)\le m\}$ and  $J_2=\{s\in J\colon \psi(s)>\},$ we
have the following identity
\begin{align*}
\bigl(\av{\psi^2}J&-(\av\psi J)^2\bigr)-\bigl(\av{\psi_m^2}J-(\av{\psi_m}J)^2\bigr)
\\
=&\frac{|J_2|}{|J|}\bigl(\av{\psi^2}{J_2}-(\av{\psi}{J_2})^2\bigr)+
\frac{|J_2|\,|J_1|}{|J|^2}\bigl(\av{\psi}{J_2}-m)\bigr)
\bigl(\av{\psi}{J_2}+m-2\av{\psi}{J_1}\bigr)\ge0,
\end{align*}
which implies that $\psi_m$ is in $\BMO_\ve(I)$ if $\psi$ is. Therefore, for $\varphi=\psi_m$
inequalities~\eqref{estim} hold and we can pass to the limit as $m\to\infty.$ Clearly, the
averages of $\psi_m$ converge to the averages of $\psi$ and the values of
$B^\pm_{\ve_1}(\av{\psi_m}{},\av{\psi_m^2}{})$ converge to
$B^\pm_{\ve_1}(\av{\psi}{},\av{\psi^2}{})$ because of continuity of the functions $B^\pm.$
Due to the monotone convergence of $\psi_m$ we can pass to the limit under the integral.
Taking first the supremum and then infimum over all $\psi\in \BMO_\ve(I)$ with
$\av{\psi}I=x_1$ and $\av{\psi^2}I=x_2,$ we obtain the inequalities
$$
B^+_{\ve_1}(x)\ge \bel{B}^+_{\ve}(x),\qquad B^-_{\ve_1}(x)\le \bel{B}^-_{\ve}(x),
$$
thus proving the lemma.
\end{proof}
\begin{proof}[Proof of Lemma~\ref{l3c}] To prove the lemma, we need to check that
\eq[t20.1]{
\mp\frac{\partial^2 B_\ve^\pm}{\partial x_i\partial x_j}
}
is a nonnegative matrix. Direct calculation yields
\begin{align*}
\frac{\partial B_\ve^\pm}{\partial x_1}&=
\frac{1-x_1\mp\sqrt{\ve^2+x_1^2-x_2}}{1\mp\ve}
\exp\left\{x_1\pm\sqrt{\ve^2+x_1^2-x_2}\mp\ve\right\},
\\
\frac{\partial B_\ve^\pm}{\partial x_2}&=
\frac1{2(1\mp\ve)}
\exp\left\{x_1\pm\sqrt{\ve^2+x_1^2-x_2}\mp\ve\right\},
\\
\frac{\partial^2 B_\ve^\pm}{\partial x_1^2}&=
\mp\frac{\left(x_1\pm\sqrt{\ve^2+x_1^2-x_2}\right)^2}
{\sqrt{\ve^2+x_1^2-x_2}(1\mp\ve)}
\exp\left\{x_1\pm\sqrt{\ve^2+x_1^2-x_2}\mp\ve\right\},
\\
\frac{\partial^2 B_\ve^\pm}{\partial x_1\partial x_2}&=
\pm\frac{x_1\pm\sqrt{\ve^2+x_1^2-x_2}}
{2\sqrt{\ve^2+x_1^2-x_2}(1\mp\ve)}
\exp\left\{x_1\pm\sqrt{\ve^2+x_1^2-x_2}\mp\ve\right\},
\\
\frac{\partial^2 B_\ve^\pm}{\partial x_2^2}&=
\mp\frac1{4\sqrt{\ve^2+x_1^2-x_2}(1\mp\ve)}
\exp\left\{x_1\pm\sqrt{\ve^2+x_1^2-x_2}\mp\ve\right\}.
\end{align*}
Therefore, the quadratic form of the matrix \eqref{t20.1} is
\begin{align}
\notag\mp&\sum_{i,j=1}^2
\frac{\partial^2 B_\ve^\pm}{\partial x_i\partial x_j}\Delta_i\Delta_j=&\\
\label{t20.2}&\frac{\left(\left(x_1\pm\sqrt{\ve^2+x_1^2-x_2}\right)\Delta_1
-\frac12\Delta_2\right)^2}{\sqrt{\ve^2+x_1^2-x_2}(1\mp\ve)}
\exp\left\{x_1\pm\sqrt{\ve^2+x_1^2-x_2}\mp\ve\right\}
\ge0,
\end{align}
which establishes the result.
\end{proof}
\begin{proof}[Proof of Lemma~\ref{l4c}]
We fix an interval $I$ and a function $\varphi\in \BMO_{\ve}(I).$ We
now explicitly construct an algorithm to find the splitting $I=I_-\cup I_+,$ i.e. choose the
splitting parameters $\alpha_\pm=|I_\pm|/|I|.$ As before,
$x_1^{\pm}=\av{\varphi}{I_\pm},$ $x_2^{\pm}=\av{\varphi^2}{I_\pm}.$ Also,
put $x^0_1=\av{\varphi}I$ and $x^0_2=\av{\varphi^2}I.$ Lastly, by $[s,t]$
we will denote the straight-line segment connecting two points $s$ and $t$ in the plane.

First, we take $\alpha_-=\alpha_+=\frac12$ (see Fig. \ref{f11}). If the whole segment $[x^-,x^+]$ is in
$\Omega_{\ve_1},$ we fix this splitting. Assuming it is not the case, there exists a point
$x$ on this segment with $x_2-x_1^2>\ve_1^2.$ Observe that only one of the segments
$[x^-,x^0]$ and $[x^+,x^0]$ contains such points. Call the corresponding endpoint ($x^-$ or
$x^+$) $\xi.$
\begin{figure}[ht]
\begin{center}
\begin{picture}(200,200)
\thinlines
\put(100,0){\vector(0,1){180}}
\put(10,20){\vector(1,0){180}}
\linethickness{.5pt}
\thicklines
\qbezier[1000](20,82)(100,-42)(180,82)
\qbezier[1000](20,162)(100,15)(180,162)
\qbezier[1000](20,168)(100,15)(180,168)
\qbezier[1000](28,77)(28,77)(138,101)
\put(30,69){\footnotesize $x^-$}
\put(83,80){\footnotesize $x^0$}
\put(138,93){\footnotesize $\xi$}
\put(28,77){\circle*{2}}
\put(83,89){\circle*{2}}
\put(138,101){\circle*{2}}
\put(180,80){\footnotesize $x_2=x_1^2$}
\put(180,160){\footnotesize $x_2=x_1^2+\ve^2$}
\put(180,175){\footnotesize $x_2=x_1^2+\ve_1^2$}
\end{picture}
\caption{The initial splitting: $\alpha_-=\alpha_+=\frac12,~\xi=x^+.$}
\label{f11}
\end{center}
\end{figure}
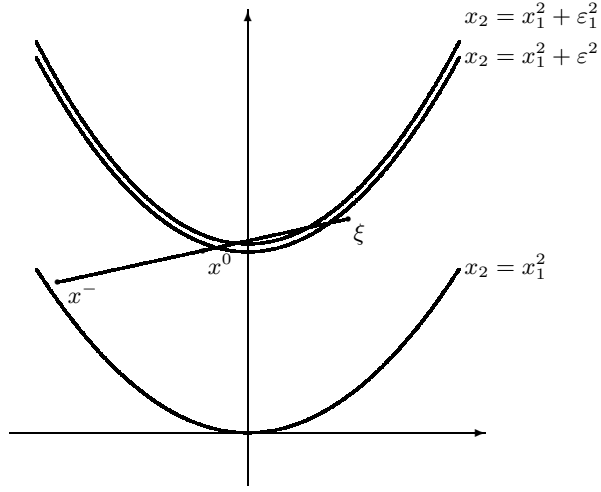
Its position is completely defined by the choice of $\alpha_+.$ Define the function $\rho$ as
follows: $\rho(\alpha_+)=\max_{x\in [\xi,x^0]}\{x_2-x_1^2\}.$ By assumption,
$\rho\left(\frac12\right)>\ve_1^2.$ We will now change $\alpha_+$ so that $\xi$ approaches
$x^0,$ i.e. we will increase $\alpha_+$ if $\xi=x^+$ and decrease it if $\xi=x^-.$ We stop
when $\rho(\alpha_+)=\ve_1^2$ and fix that splitting. It remains to check that such a moment
occurs at all and that the corresponding $\alpha_+$ is separated from 0 and 1. Without loss
of generality, assume that $\xi=x^+.$ Let $I=[a,b].$ Since $\varphi\in L^2(I),$ the functions
$\xi_1(\alpha_+)=\frac1{\alpha_+}\int_{b-|I|\alpha_+}^b \varphi(w)\,dw$ and
$\xi_2(\alpha_+)=\frac1{\alpha_+}\int_{b-|I|\alpha_+}^b \varphi^2(w)\,dw$ are continuous on
the interval $(0,1]$ and $\xi(1)=x^0.$ Therefore, $\rho$ is continuous on $(0,1].$ Since
$\rho\left(\frac12\right)>\ve_1^2$ and $\rho(1)\le \ve^2<\ve_1^2$ (recall, $x^0\in \Oe$), we
conclude that there is a point $\alpha_+\in\left[\frac12,1\right]$ with
$\rho(\alpha_+)=\ve_1^2$ (Fig. \ref{f12}).

Having just proved that the desired point exists, we need to check that the corresponding
$\alpha_+$ is not too close to 0 or 1. If $\xi=x^+,$ we have $\alpha_+>\frac12$ and
$\xi_1-x_1^0=x_1^+-x_1^0=\alpha_-(x_1^+-x_1^-).$ Analogously, if $\xi=x^-,$ we have
$\alpha_->\frac12$ and $\xi_1-x_1^0=x_1^--x_1^0=\alpha_+(x_1^--x_1^+).$ Thus
$|\xi_1-x_1^0|=\min\{\alpha_\pm\}|x_1^--x_1^+|.$
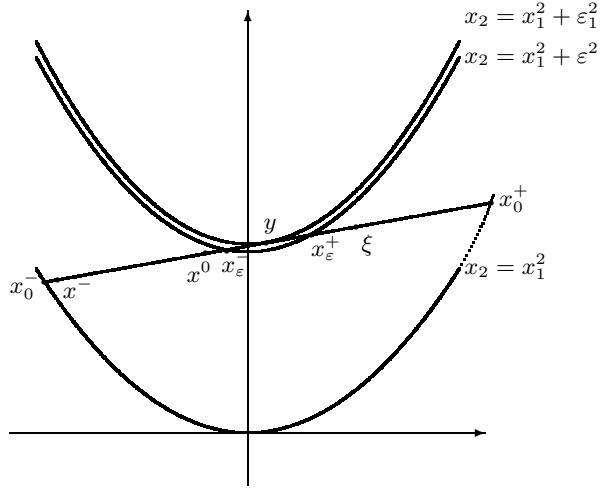
\begin{figure}[ht]
\begin{center}
\begin{picture}(200,200)
\thinlines
\put(100,0){\vector(0,1){180}}
\put(10,20){\vector(1,0){180}}
\linethickness{.5pt}
\thicklines
\qbezier[1000](20,82)(100,-42)(180,82)
\qbezier[1000](20,162)(100,15)(180,162)
\qbezier[1000](20,168)(100,15)(180,168)
\qbezier[1000](23,77)(23,77)(192,107)
\qbezier[20](180,82)(190,100)(193,110)
\put(23,77){\circle*{2}}
\put(28,78){\circle*{2}}
\put(141,98){\circle*{2}}
\put(84,88){\circle*{2}}
\put(92,89){\circle*{2}}
\put(107,92){\circle*{2}}
\put(124,95){\circle*{2}}
\put(192,107){\circle*{2}}
\put(180,80){\footnotesize $x_2=x_1^2$}
\put(180,160){\footnotesize $x_2=x_1^2+\ve^2$}
\put(180,175){\footnotesize $x_2=x_1^2+\ve_1^2$}
\put(28,71){\footnotesize $x^-$}
\put(75,77){\footnotesize $x^0$}
\put(88,81){\footnotesize $x_\ve^-$}
\put(104,97){\footnotesize $y$}
\put(122,87){\footnotesize $x_\ve^+$}
\put(141,89){\footnotesize $\xi$}
\put(193,106){\footnotesize $x_0^+$}
\put(8,73){\footnotesize $x_0^-$}
\end{picture}
\caption{The stopping time: $[x^-,\xi]$ is tangent to the parabola $x_2=x_1^2+\ve^2.$}
\label{f12}
\end{center}
\end{figure}
For the stopping value of $\alpha_+,$ the straight line through the points $x^-,x^+$ and
$x^0$ is tangent to the parabola $x_2=x_1^2+\ve_1^2$ at some point $y.$ The equation of this
line is, therefore, $x_2=2x_1y_1-y_1^2+\ve_1^2.$ The line intersects the graph of
$x_2=x_1^2+\ve^2$ at the points
$$
x_\ve^{\pm}=\left(y_1\pm\sqrt{\ve_1^2-\ve^2},y_2\pm 2y_1\sqrt{\ve_1^2-\ve^2}\right)
$$
and the graph of $x_2=x_1^2$ at the points
$$
x_0^{\pm}=(y_1\pm\ve_1,y_2\pm2y_1\ve_1).
$$
We then have
$$
[x_\ve^-,x_\ve^+]\subset[x^0,\xi]\subset[x^-,x^+]\subset[x_0^-,x_0^+]
$$
and, therefore,
$$
\begin{array}{lll}
2\sqrt{\ve_1^2-\ve^2}&=&|(x_\ve^+)_1-(x_\ve^-)_1|\le|x_1^0-\xi_1|=\min\{\alpha_\pm\}|x_1^+-x_1^-|\\
&&\\
&\le&\min\{\alpha_\pm\}|(x_0^+)_1-(x_0^-)_1|=\min\{\alpha_\pm\}2\ve_1,
\end{array}
$$
which implies
$$
\sqrt{1-\left(\frac{\ve}{\ve_1}\right)^2}\le \alpha_+\le 1-\sqrt{1-\left(\frac{\ve}{\ve_1}\right)^2}.
$$
As promised, this estimate does not depend on $\varphi$ or $I.$
\end{proof}
\subsection{How to find the Bellman function}
\label{howtobelc}
We first observe that the Bellman functions $\bel{B}^\pm$ must be of the form
\eq[t18]{
\bel{B}^\pm_{\ve}(x)=\exp\left\{x_1+w^\pm_{\ve}(x_2-x_1^2)\right\}
}
for some positive functions
$w^\pm$ on $[0,\ve^2]$ such that $w^\pm_{\ve}(0)=0.$

Indeed, fix an interval $I.$ Then $\varphi\in \BMO_{\ve}(I)$ if and only if $\varphi+c\in
\BMO_{\ve}(I),$ where $c$ is an arbitrary constant. Let $\tilde{\varphi}=\varphi+c.$ We have
(all averages are over $I$) $\ave{\tilde{\varphi}}=\ave{\varphi}+c,$ $\ave{\tilde{\varphi}}^2=\ave{\varphi^2}+2c\ave{\varphi}+c^2,$ and
$\ave{e^{\tilde{\varphi}}}=e^c\ave{e^\varphi}.$ Then
$$
\sup_{\varphi\in
\BMO_{\ve}(I)}\!\!\left\{\ave{e^{\tilde{\varphi}}}:\,\ave{\varphi}=x_1,\ave{\varphi}^2=x_2\right\}=
e^c\!\!\!\!\sup_{\varphi\in
\BMO_{\ve}(I)}\!\!\left\{\ave{e^\varphi}:\,\ave{\varphi}=x_1,\ave{\varphi^2}=x_2\right\}
$$
or
\begin{align*}
\sup_{\tilde{\varphi}\in
\BMO_{\ve}(I)}\!\!\left\{\ave{e^{\tilde{\varphi}}}:\,\ave{\tilde{\varphi}}=x_1+c,
\ave{\tilde{\varphi}^2}=x_2+2cx_1+c^2\right\}&\\= e^c\!\!\!\!\sup_{\varphi\in
\BMO_{\ve}(I)}\!\!&\left\{\ave{e^\varphi}:\,\ave{\varphi}=x_1,\ave{\varphi^2}=x_2\right\}.
\end{align*}
Completely analogous statements with $\inf$ instead of $\sup$ can be made. Altogether, we get
or
$$
\bel{B}^\pm_{\ve}(x_1+c,x_2+2cx_1+c^2)=e^c\bel{B}^\pm_{\ve}(x_1,x_2).
$$
Setting $c=-x_1,$ and omitting the index $\ve$ we get
$$
\bel{B}^\pm(0,x_2-x_1^2)=e^{-x_1}\bel{B}^\pm(x_1,x_2).
$$
By Jensen's inequality ($\ave{e^\varphi}\ge e^{\ave{\varphi}}$), we obtain
$\bel{B}^\pm(0,x_2-x_1^2)\ge 1.$ Hence, there exists a positive function $w^\pm=\log
\bel{B}^\pm(0,\cdot)$ defined on the interval $[0,\ve^2]$ such that \eqref{t18} holds.
Furthermore, $x_2=x_1=0$ if and only if $\varphi=0.$ Thus $\bel{B}^\pm(0,0)=1$ and
$w^\pm(0)=0.$

The successful Bellman function candidate $B$ (we will omit the index $\pm$ when no confusion results) must be of the form \eqref{t18}. Moreover, to use the machinery of Lemma~\ref{l2c}, we need the statements of Lemma~\ref{l3c} to hold. So we want
\eq[t19]{
\mp\frac{\partial^2 B^\pm}{\partial x_i\partial x_j}
}
to be a nonnegative matrix.

Using \eqref{t18}, we get
\begin{align*}
\frac{\partial B}{\partial x_1}&=(1-2x_1w')B,\smallskip \\
\frac{\partial B}{\partial x_2}&=w'B,\smallskip \\
\frac{\partial^2 B}{\partial x_1^2}&=\left((1-2x_1w')^2-2w'+4x_1^2w''\right)B,\smallskip \\
\frac{\partial^2 B}{\partial x_1\partial x_2}&=\left(w'(1-2x_1w')-2x_1w''\right)B,\smallskip \\
\frac{\partial^2 B}{\partial x_2^2}&=\left((w')^2+w''\right)B.
\end{align*}
Matrix \eqref{t19} turns into
\eq[t20]{
\mp\left[
\begin{array}{cc}
\ds \frac{\partial^2 B^\pm}{\partial x_1^2}&\ds \frac{\partial^2 B^\pm}{\partial x_1\partial x_2}\smallskip \\
\ds \frac{\partial^2 B^\pm}{\partial x_1\partial x_2}&\ds \frac{\partial^2 B^\pm}{\partial x_2^2}
\end{array}
\right]=
\mp B^\pm
\left[
\begin{array}{cc}
1&-2x_1\\
0&1
\end{array}
\right]
R
\left[
\begin{array}{cc}
1&0\\
-2x_1&1
\end{array}
\right],
}
where
\eq[t21]{
R=\left[
\begin{array}{cc}
1-2w'&w'\\
w'&(w')^2+w''
\end{array}
\right]. 
} 
For the extremal function (if any) we must have equality at every step in
\eqref{t17} and \eqref{t17-} in Lemma~\ref{l2c}, so the matrix \eqref{t19} has to be degenerate.
Because of the representation \eqref{t20} and \eqref{t21}, this translates into 
\eq[t22]{
(1-2w')\left((w')^2+w''\right)=(w')^2, 
} 
and the non-negativity condition~\eqref{t19} is
equivalent to the inequality 
\eq[t23]{ 
\pm(2(w^\pm)'-1)\ge0. 
} We solve equation \eqref{t22}
\begin{align*}
\ds(1-2w')w''&=2(w')^3\\
\ds\left(\frac1{2(w')^3}-\frac1{(w')^2}\right)w''&=1\\
\ds\left(\frac1{w'}-\frac1{4(w')^2}\right)'&=1\\
\ds\frac1{w'}-\frac1{4(w')^2}&=t+const\\
\ds-\left(1-\frac1{2w'}\right)^2&=t+const.
\end{align*}
This implies that the constant has to be non-positive. We parametrize the family of possible
solutions by a positive parameter $\delta$ setting $const=-\delta^2.$ Then
$$
\left(1-\frac1{2w'}\right)^2=\delta^2-t
$$
and 
\eq[t24]{ 
1-\frac1{2w'}=\pm\sqrt{\delta^2-t}. 
} 
We see that the solution is defined on
the interval $[0,\delta^2].$ Condition \eqref{t23} with ``$+$'' means that $w'\ge\half$. This
requires the ``$+$'' sign in~\eqref{t24} and this square root has to be strictly less than
$1.$ Therefore, the only feasible solution for $w^+$ is that for $\delta<1.$ We get the
solution for $w^-$ by choosing the ``$-$'' sign in~\eqref{t24}. It works for all $\delta>0.$
Thus, equation \eqref{t24} gives
$$
(w^\pm)'=\frac1{2(1\mp\sqrt{\delta^2-t})}
$$
and, taking into account that $w(0)=0,$ we obtain
$$
w^\pm(t)=\frac12\int_0^t \frac1{1\mp\sqrt{\delta^2-s}}\,ds=\log\frac{1\mp\sqrt{\delta^2-t}}{1\mp\delta}\pm\sqrt{\delta^2-t}\mp\delta,
$$
which, together with \eqref{t18}, gives \eqref{t12}
$$
B^\pm_{\delta}(x)=\frac{1\mp\sqrt{\delta^2+x_1^2-x_2}}{1\mp\delta}\exp\left(x_1\pm\sqrt{\delta^2+x_1^2-x_2}\mp\delta\right).
$$
\subsection{How to find the extremal function}
\label{howtoextc} 
We now show how to find the extremal function that appeared without an
explanation in the proof of Lemma~\ref{l1c}. As mentioned in the previous section, for the extremal
function there is equality at every step in the chain of inequalities \eqref{t17}. Thus in
the splitting process we only proceed along the vector field defined by the kernel vectors of
the matrix \eqref{t19}. The quadratic form of that matrix is given by~\eqref{t20.2}:
\eq[t25]{
\begin{array}{ll}
\ds \mp\sum_{i,j=1}^2\frac{\partial^2 B^\pm_\delta}{\partial x_i \partial x_j}\Delta_i\Delta_j=\\
\ds \frac{\left(\left(x_1\pm\sqrt{\delta^2+x_1^2-x_2}\right)\Delta_1-\frac12\Delta_2\right)^2}{\sqrt{\delta^2+x_1^2-x_2}\,(1\mp\delta)}
\exp\left\{x_1\pm\sqrt{\delta^2+x_1^2-x_2}\mp\delta\right\}.
\end{array}
} 
Hence, the trajectories along which $B$ is a linear function are given by 
\eq[t26]{
\left(x_1\pm\sqrt{\delta^2+x_1^2-x_2}\right)dx_1=\frac12dx_2. 
}
Introducing the variable
$t=\pm\sqrt{\delta^2+x_1^2-x_2},$ we have $t^2=\delta^2+x_1^2-x_2$ and
$2t\,dt=2x_1\,dx_1-dx_2.$ Replacing $\frac12dx_2$ in \eqref{t26} by $x_1\,dx_1-t\,dt,$ we get
$t\,dx_1=-t\,dt,$ i.e. $t=c-x_1$ and 
\eq[t26.1]{
x_2=\delta^2+x_1^2-t^2=2cx_1+\delta^2-c^2. 
}
The corresponding trajectories are straight lines tangent to the upper boundary
$x_2=x_1^2+\delta^2$ of $\Od$ at the point $x=(c,c^2+\delta^2).$ Consider the following two
families of such straight-line segments
$$
\begin{array}{c}
\ds\omega^+_\delta(c)=\left\{x=(x_1,2cx_1+\delta^2-c^2):c-\delta\le x_1\le c\right\};\\
\ds\omega^-_\delta(c)=\left\{x=(x_1,2cx_1+\delta^2-c^2):c\le x_1\le c+\delta\right\}.
\end{array}
$$
Each of these families covers the whole domain, i.e.
$$
\Od=\bigcup_{c\in\mathbb{R}}\omega^+_\delta(c)=\bigcup_{c\in\mathbb{R}}\omega^-_\delta(c).
$$
Furthermore, $B^+$ is a linear function on each segment $\omega^+_\delta(c),$ while $B^-$ is
a linear function on each segment $\omega^-_\delta(c).$ Indeed, since
$\sqrt{\delta^2+x_1^2-x_2}=|x_1-c|$ on the line $x_2=2cx_1+\delta^2-c^2,$ we have
\begin{align*}
B^+_{\delta}(x_1,2cx_1+\delta^2-c^2)=\frac{1+x_1-c}{1-\delta}e^{c-\delta}~~\text{for}~~c-\delta\le x_1\le c;\\
B^-_{\delta}(x_1,2cx_1+\delta^2-c^2)=\frac{1+x_1-c}{1+\delta}e^{c+\delta}~~\text{for}~~c\le x_1\le c+\delta.
\end{align*}
Therefore, if both points $x^\pm$ are on a segment $\omega^+_\delta(c)$ or
$\omega^-_\delta(c),$ we have equality in the corresponding line in \eqref{t16} (with
$\delta=\ve$).\footnote{To avoid misunderstanding, we note that $\pm$ in $x^\pm$ and in
$\omega_\delta^\pm$ are independent: $x^\pm$ are two points in the domain $\Od$ whose convex
combination is the point $x,$ while $\pm$ in $\omega_\delta^\pm$ means that we consider
either $B^+$ or $B^-,$ as appropriate.}

Note that we have one more ``acceptable trajectory,'' the envelope of the segments
$\omega^+_\delta(c)$ (or $\omega^-_\delta(c)$) the parabola $x_2=x_1^2+\delta^2.$

Let $x^0$ be an arbitrary point inside $\Od.$ Then we make the splitting so that $x^-$ is on
the boundary $x_2=x_1^2+\delta^2$ and the segment $\omega^+_\delta(x_1^-)$ passes through the
point $x^0.$ Every point on that segment satisfies the equation
$$
x_2=2x_1^-x_1+\delta^2-(x_1^-)^2,
$$
so $x_1^-=x_1^0+\sqrt{\delta^2+(x_1^0)^2-x_2^0}.$ We choose the second endpoint $x^+$ to be
the point of intersection of $\omega^+_\delta(x_1^-)$ and the lower boundary of $\Od,$
$x_2=x_1^2.$ This is equivalent to letting $\varphi$ be constant on $I_+.$ Then
$x_2^+=(x_1^+)^2=2x_1^-x_1^++\delta^2-(x_1^-)^2$ and, hence, $x_1^+=x_1^--\delta.$

Assume that $\varphi_c$ is the extremal function (defined on $[0,1]$) that corresponds to the
point $(c,c^2+\delta^2)$ on the upper boundary. Then for $\varphi|_{I_-}$ we have to take the
function $\varphi_{x_1^-}$ rescaled to the interval $I_-.$ So, if $I=[0,1],$ then
$I_-=[0,\alpha_-],$ $I_+=[\alpha_-,1],$ and 
\eq[extrf]{ 
\varphi(t)=
\begin{cases}
\varphi_{x_1^-}\bigl(\frac{t}{\alpha_-}\bigr),\quad & 0\le t<\alpha_-
\\
\quad x_1^+,&\alpha_-\le t\le1.
\end{cases}
} 
We have defined the extremal function $\varphi$ for an arbitrary point of $\Od$ under the
assumption that the extremal functions $\varphi_c$ for the upper boundary are known. Note that
it is sufficient to find one of these functions, say $\varphi_0$, because
$\varphi_c=\varphi_0+c$. Indeed, it is clear that $\varphi_0$ and $\varphi_0+c$ have the same
\BMO-norms and
$$
\av{\varphi_0+c}{}=c,\qquad\av{(\varphi_0+c)^2}{}=\av{\varphi_0^2}{}+2c\av{\varphi_0}{}+c^2=\delta^2+c^2.
$$
Let the point $x^0$ approach the point $x^-$ along the upper boundary, i.~e. let
$\alpha_+\to0.$ If we assume that the extremal function smoothly depends on the point $x^0,$
then the function $\varphi$ in~\eqref{extrf} coincides up to terms of the first order in
$\alpha_+$ with the function $\varphi_{x_1^0}:$
\begin{align*}
x_1^0&=\alpha_-x_1^-+\alpha_+x_1^+=(1-\alpha_+)x_1^-+\alpha_+(x_1^--\delta)=x_1^--\alpha_+\delta,
\\
x_2^0&=2x_1^-x_1^0+\delta^2-(x_1^-)^2=(x_1^0)^2-(x_1^--x_1^0)^2+\delta^2=(x_1^0)^2+(1-\alpha_+^2)\delta^2
\approx(x_1^0)^2+\delta^2.
\end{align*}
Therefore
$$
\varphi_{x_1^-}\Bigl(\frac{t}{\alpha_-}\Bigr)\approx\varphi_{x_1^0}(t)
$$
up to terms of the first order in $\alpha_+$. Since
$$
\varphi_{x_1^-}\Bigl(\frac{t}{\alpha_-}\Bigr)=\varphi_0\Bigl(\frac{t}{\alpha_-}\Bigr)+x_1^-=
\varphi_0\Bigl(\frac{t}{1-\alpha_+}\Bigr)+x_1^-\approx
x_1^-+\varphi_0(t)+\alpha_+t\varphi_0'(t)
$$
and
$$
\varphi_{x_1^0}(t)=\varphi_0(t)+x_1^0 =x_1^-+\varphi_0(t)-\alpha_+\delta,
$$
we have
\begin{align*}
t\varphi_0'(t)&=-\delta,
\\
\varphi_0(t)&=-\delta\log t+{\rm const}.
\end{align*}
Condition $\av{\varphi_0}{}=0$ implies 
$$
\varphi_0(t)=\delta\left(\log\frac1t-1\right). 
$$ 
This yields the function we used to prove Lemma~\ref{l1c}.
\section{The dyadic case}
To prove Theorem \ref{t2d}, we follow the procedure of the continuous case. Namely, we first produce
extremal functions $\varphi_\pm\in \BMO^d_\ve(I)$ with appropriate averages, for which
$\av{e^{\varphi_\pm}}I=B^\pm_{\delta^\pm(\ve)}.$ This proves that $\bel{B}^{d+}_\ve\ge
B^+_{\delta^+(\ve)}$ and $\bel{B}^{d-}_\ve\le
B^-_{\delta^-(\ve)}.$ Then, we use a concavity-type result similar to Lemma~\ref{l3c}, which allows us
to run the inductive machine of Lemma~\ref{l2c} to prove that the converse inequalities.

\begin{lemd}
\label{l1d}
For every $x\in\Oe,$
\eq[1d]{
\bel{B}^{d+}_{\ve}(x)\ge
B^+_{\delta^+(\ve)}(x),~~~ \bel{B}^{d-}_{\ve}(x)\le B^-_{\delta^-(\ve)}(x).
}
\end{lemd}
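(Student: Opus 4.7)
The plan is to exhibit, for each $x \in \Oe$, test functions $\varphi_\pm \in \BMO^d_\ve(I)$ with $(\av{\varphi_\pm}I, \av{\varphi_\pm^2}I) = (x_1, x_2)$ and $\av{e^{\varphi_\pm}}I = B^\pm_{\delta^\pm(\ve)}(x)$; both inequalities in \eqref{1d} then follow from the definitions of $\bel{B}^{d\pm}_\ve$. The chief structural difference from Lemma~\ref{l1c} is that every admissible split $I = I_- \cup I_+$ is now forced to be symmetric, $|I_\pm| = |I|/2$, so the continuous logarithmic profile $\varphi_{a,b,\gamma}$ must be replaced by a self-similar dyadic step function built by recursion on the dyadic tree of~$I$.

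I would first build the ``$+$'' extremal at the upper-parabola point $x^0 = (0, \ve^2)$. The midpoint constraint $x^0 = \half(x^- + x^+)$, together with the requirement that $x^-$ lie on $x_2 = x_1^2 + \ve^2$ and $x^+$ on $x_2 = x_1^2$, uniquely forces $x^- = (\ve/\sqrt 2, 3\ve^2/2)$ and $x^+ = (-\ve/\sqrt 2, \ve^2/2)$. This dictates the recursive definition $\varphi_+ \equiv -\ve/\sqrt 2$ on $[\half, 1]$ and $\varphi_+(t) = \varphi_+(2t) + \ve/\sqrt 2$ on $[0, \half)$, which unfolds to the step function
\[
\varphi_+(t) = (k-1)\frac{\ve}{\sqrt 2},\qquad t \in \bigl[2^{-k-1}, 2^{-k}\bigr),\quad k \ge 0.
\]
Three verifications then complete this case: (i) $\varphi_+ \in \BMO^d_\ve(I)$ by self-similarity, since every dyadic subinterval either lies inside a constant piece (variance $0$) or is a rescaled copy of $I$ (variance $\ve^2$); (ii) the averages $(\av{\varphi_+}I, \av{\varphi_+^2}I) = (0, \ve^2)$ follow from the fixed-point identities of the recursion; and (iii) the geometric sum
\[
\av{e^{\varphi_+}}I = \sum_{k=0}^{\infty} 2^{-k-1}\,e^{(k-1)\ve/\sqrt 2} = \frac{e^{-\ve/\sqrt 2}}{2 - e^{\ve/\sqrt 2}}
\]
converges exactly when $\ve < \sqrt 2\log 2$, and a direct rearrangement of \eqref{t11} shows that this value equals $B^+_{\delta^+(\ve)}(0, \ve^2)$---indeed, \eqref{t11} is nothing other than the identity $B^+_{\delta^+(\ve)}(0, \ve^2) = e^{-\ve/\sqrt 2}/(2 - e^{\ve/\sqrt 2})$. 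The shift $\varphi_+ \mapsto \varphi_+ + c$ then transports this to every upper-parabola point $(c, c^2+\ve^2)$.

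For an arbitrary interior $x \in \Oe$, I would prolong the construction by further midpoint splits $x = \half(x^- + x^+)$, chosen so that the dyadic equality $B^+_{\delta^+(\ve)}(x) = \half B^+_{\delta^+(\ve)}(x^-) + \half B^+_{\delta^+(\ve)}(x^+)$ holds, and then place on $I_\pm$ the already-constructed extremals for $x^\pm$; since this equality is preserved at every scale, the averaged exponential over the whole dyadic tree equals $B^+_{\delta^+(\ve)}(x)$. The ``$-$'' bound is obtained mirror-symmetrically: take $\varphi_- = -\varphi_+$ at the upper-parabola point $(0,\ve^2)$, which has the same $\BMO^d$-norm and averages, and whose exponential average $e^{\ve/\sqrt 2}/(2-e^{-\ve/\sqrt 2}) = B^-_{\delta^-(\ve)}(0,\ve^2)$ by exactly the same rearrangement applied to \eqref{t11d}; translation and the same midpoint-splitting scheme extend this to all of $\Oe$.

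The main obstacle I anticipate is precisely the interior extension. Solving the midpoint equation for $x^\pm \in \Oe$ given $x$ is straightforward only when $x_2 - x_1^2 \ge \ve^2/2$, where one may place $x^-$ on the upper parabola and $x^+$ on the lower one in a single step; for points closer to the lower parabola one must perform intermediate splits with $x^\pm$ still in the interior and verify, via a uniform contraction estimate along the resulting trajectories, that the infinite dyadic tree produces a genuine $\BMO^d_\ve$-function with the correct averages and exponential integral.
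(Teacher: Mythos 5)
Your construction at the vertex $(0,\ve^2)$ is exactly the paper's: the self-similar step function $\varphi_0(t)=(k-1)\ve/\sqrt2$ on $[2^{-k-1},2^{-k})$, the recursion $\varphi_0(t)=\varphi_0(2t)+\ve/\sqrt2$, the geometric sum giving $e^{-\ve/\sqrt2}/(2-e^{\ve/\sqrt2})$, the observation that equation~\eqref{t11} is just the identity $B^+_{\delta^+(\ve)}(0,\ve^2)=e^{-\ve/\sqrt2}/(2-e^{\ve/\sqrt2})$, and the shift $\varphi_0\mapsto\varphi_0+c$ for other upper-parabola points. All of that is right and complete.

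The gap is precisely the ``main obstacle'' you flag and then do not resolve: extending to interior $x$. Your sketch (``further midpoint splits chosen so the dyadic equality holds'') is morally the paper's Perspective~2, but the details you float would not go through as stated. You suggest performing intermediate splits with \emph{both} $x^\pm$ in the interior and invoking ``a uniform contraction estimate along the resulting trajectories''; that is not how the paper does it, and it obscures the norm verification. The paper instead computes the non-dyadic splitting parameter $\alpha=\frac{\delta-r_2}{\delta-r_1}$ (so that the would-be test function $\tilde\psi$ is the scaled $\varphi_0$ on $(0,\alpha)$ and a constant on $(\alpha,1)$), and then \emph{rearranges} $\tilde\psi$ into a genuinely dyadic function $\psi$ using the binary digits $\alpha_k$ of $\alpha$: on each dyadic ring $(2^{-k},2^{-k+1})$ it puts either the scaled $\varphi_0$ (if $\alpha_k=1$) or the constant $\gamma$ (if $\alpha_k=0$), per equation~\eqref{7.1}. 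A single algebraic identity, $\av{\mu\circ\psi}I=\alpha\,\av{\mu\circ(\varphi_0+\beta)}I+(1-\alpha)\mu(\gamma)$, then gives the correct first and second moments and exponential integral at once. The $\BMO^d$-norm bound $\|\psi\|_{\BMO^d}\le\ve$ is then checked by a trichotomy over dyadic $J$, and the only nontrivial case, $J=(0,2^{-n})$, requires maximizing $\eta(p)=p[\ve^2+(\beta-\gamma)^2(1-p)]$ over a partial-digit-sum parameter $p\in[0,1]$; the inequality $\eta'(p)\ge 2\rtde(\delta-\rtde)\ge 0$ pins the max at $p=1$ giving $\ve^2$. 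This last computation is the genuine technical content of the proof, and your proposal, while correctly diagnosing where the difficulty lies, leaves it entirely undone.

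A secondary point: your procedure needs each $x^\pm$ to sit on a tangent line to $x_2=x_1^2+\delta^2$ (where $B^+_\delta$ is linear), not merely on some midpoint split, and at each stage the paper places exactly one of the two split endpoints on a boundary parabola and recurses on the other. Allowing both $x^\pm$ in the interior, as you suggest for points near the lower parabola, complicates both the convergence argument and the $\BMO^d$-verification unnecessarily; the boundary-endpoint choice is what makes the norm trichotomy tractable.
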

\begin{proof}
Let $I=[0,1].$ We prove \eqref{1d} by explicitly finding functions $\varphi_+,\varphi_-\in
\BMO^d_\ve(I)$ for every $x\in\Omega_\ve$ such that
$(\av{\varphi_\pm}I,\av{\varphi_\pm^2}I)=(x_1,x_2)$ and
$$
\av{e^{\varphi_+}}I=B^+_{\delta^+(\ve)}(x),~~~
\av{e^{\varphi_-}}I=B^-_{\delta^-(\ve)}(x).
$$
As before, we only need to consider $\ve>0.$

Fix $\ve>0.$ Let the function $\varphi_0$ be defined on $I=(0,1]$ as follows:
$$
\left.\varphi_0\right|_{\left(2^{-(k+1)},2^{-k}\right]}=(k-1)a,~~k=0,1,...,
$$
with the constant $a$ to be determined later (see Fig. \ref{fi}).
\begin{figure}[ht]
\begin{center}
\begin{picture}(200,150)
\thinlines
\put(0,0){\vector(0,1){150}}
\put(-5,40){\vector(1,0){215}}
\put(200,38){\line(0,1){4}}
\put(100,38){\line(0,1){4}}
\put(50,38){\line(0,1){4}}
\put(25,38){\line(0,1){4}}
\put(12,38){\line(0,1){4}}
\put(6,38){\line(0,1){4}}
\put(3,38){\line(0,1){4}}
\put(195,32){\tiny $1$}
\put(95,32){\tiny $\frac12$}
\put(45,32){\tiny $\frac14$}
\put(20,32){\tiny $\frac18$}
\put(6,32){\tiny $\frac1{16}$}
\put(-2,20){\line(1,0){4}}
\put(-2,40){\line(1,0){4}}
\put(-2,60){\line(1,0){4}}
\put(-2,80){\line(1,0){4}}
\put(-2,100){\line(1,0){4}}
\put(-2,120){\line(1,0){4}}
\put(-2,140){\line(1,0){4}}
\put(-14,22){\tiny $-a$}
\put(-8,62){\tiny $a$}
\put(-12,82){\tiny $2a$}
\put(-12,102){\tiny $3a$}
\put(-12,122){\tiny $4a$}
\put(-12,142){\tiny $5a$}
\thicklines
\put(100,20){\line(1,0){100}}
\put(50,40){\line(1,0){50}}
\put(25,60){\line(1,0){25}}
\put(12,80){\line(1,0){13}}
\put(6,100){\line(1,0){6}}
\put(3,120){\line(1,0){3}}
\put(1,140){\line(1,0){2}}
\end{picture}
\caption{The function $\varphi_0.$ } 
\label{fi}
\end{center}
\end{figure}
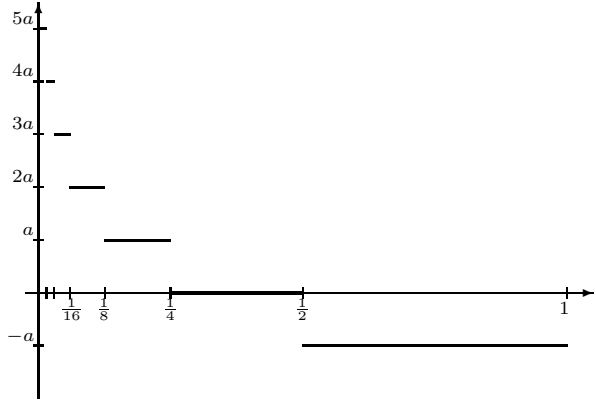
We now calculate the $\BMO^d$ norm of $\varphi_0$ and choose $a$ so that
$\|\varphi_0\|_{\BMO^d}=\ve.$ The only dyadic intervals on which $\varphi_0$ is not constant
and, hence, $\av{\varphi_0^2}I-\av{\varphi_0}I^2\ne 0$ are the ones with $0$ as
their left endpoint. Let $I_n=\left(0,2^{-n}\right].$ Then
$$
\av{\varphi_0}{I_n}=2^n\int_0^{1/2^n}\varphi_0(s)\,ds=2^n\sum_{k=n-1}^{\infty}\frac{ka}{2^{k+2}}=
\frac{a}42^n\left(\frac12\right)^{n-2}\!\!\!n=an
$$
and
$$
\av{\varphi_0^2}{I_n}=2^n\int_0^{1/2^n}\varphi_0^2(s)\,ds=2^n\sum_{k=n-1}^{\infty}\frac{k^2a^2}{2^{k+2}}=
\frac{a^2}4 2^n\left(\frac12\right)^{n-2}\left(n^2+2\right)=a^2(n^2+2),
$$
where we have used the identities
$$
\sum_{k=N-1}^{\infty}k\left(\frac12\right)^k=\left(\frac12\right)^{N-2}N,~~~~
\sum_{k=N-1}^{\infty}k^2\left(\frac12\right)^k=\left(\frac12\right)^{N-2}(N^2+2).
$$
Then
$$
\begin{array}{lll}
\|\varphi_0\|^2_{\BMO^d}&=&\ds \sup_{J\,\text{dyad}\,\subset I}\left\{\av{\varphi_0^2}{J}-\av{\varphi_0}{J}^2\right\}\\
&&\\
&=&\ds \sup_{n}\left\{\av{\varphi_0^2}{I_n}-\av{\varphi_0}{I_n}^2\right\}=\sup_n\left\{a^2(n^2+2)-a^2n^2\right\}=2a^2.
\end{array}
$$
Setting $\|\varphi_0\|_{\BMO^d}=\ve,$ we get $a=\ve/\sqrt2.$ Now,
$$
\av{e^{\varphi_0}}I=\sum_{k=-1}^{\infty}\frac{e^{ka}}{2^{k+2}}=\sum_{k=-1}^{\infty}\frac14\left(\frac{e^a}2\right)^k.
$$
The latter sum converges if and only if $e^a<2,$ i.e. $a<\log 2.$ In this case,
\eq[4]{
\av{e^{\varphi_0}}I=\frac{e^{-\ve/\sqrt2}}{2-e^{\ve/\sqrt2}}.
}
In terms of $\ve_0^d$ from Theorem~\ref{t1d}, we obtain the following crucial estimate
$$
\ve_0^d\le \sqrt2\log 2.
$$
Likewise,
$$
\av{e^{-\varphi_0}}I=\frac{e^{\ve/\sqrt2}}{2-e^{-\ve/\sqrt2}}
$$
for
arbitrary $\ve>0$.

We now use $\varphi_0$ to construct the desired functions $\varphi_\pm.$ Let
$$
r_1=\rtde;~~r_2=\sqrt{\delta^2-x_2+x_1^2};~~\beta=r_2-r_1;~~\gamma=r_2-\delta;~~
\alpha=\frac{\delta-r_2}{\delta-r_1}.
$$
Here $\delta$ will mean either $\delta^+$ or $\delta^-,$ depending on the context. Define
$\tilde{\varphi}_\pm$ on $I$ by 
$$
\tilde{\varphi}_\pm(t)=x_1\pm\tilde{\psi}(t),\text{~~where~~}\tilde{\psi}(t)=
\begin{cases}
\varphi_0\left(\frac{t}{\alpha}\right)+\beta & \mbox{for}~0<t<\alpha\\
\gamma & \mbox{for}~\alpha<t<1.
\end{cases} 
$$
Observe that $\av{\varphi_0}I=0,~\av{\varphi_0^2}I=\ve^2.$ Since
$\alpha=\gamma/(\gamma-\beta),$ we have $\av{\tilde{\psi}}I=\beta\alpha+\gamma(1-\alpha)=0$
and so $\av{\tilde{\varphi}_\pm}I=x_1.$ Also,
$\av{\tilde{\psi}^2}{(0,\alpha)}=\av{\varphi_0^2}I+2\av{\varphi_0}I\beta+
\beta^2=\ve^2+\beta^2$ and we get
$\av{\tilde{\varphi}_\pm^2}I=x_1^2+(\ve^2+\beta^2)\alpha+\gamma^2(1-\alpha)=x_2.$ It remains
to calculate $\av{e^{\tilde{\varphi}_\pm}}I.$ In the notation we have introduced, equations
\eqref{t11}, \eqref{t11d} can be rewritten (for the appropriate $\delta\!$'s) as
$$
\frac{e^{-\ve/\sqrt2}}{2-e^{\ve/\sqrt2}}=\frac{1-r_1}{1-\delta}e^{r_1-\delta}=
\left(1+\frac{\delta-r_2}{\alpha(1-\delta)}\right)e^{r_1-\delta},
$$
$$
\frac{e^{\ve/\sqrt2}}{2-e^{-\ve/\sqrt2}}=\frac{1+r_1}{1+\delta}e^{-r_1+\delta}=
\left(1-\frac{\delta-r_2}{\alpha(1+\delta)}\right)e^{-r_1+\delta}.
$$
Therefore, using \eqref{4} we get
\begin{align*}
\av{e^{\tilde{\varphi}_+}}I&=e^{x_1}\av{e^{\tilde{\psi}}}I=\frac{e^{x_1+\beta-\ve/\sqrt2}}{2-e^{\ve/\sqrt2}}\,\alpha+e^{x_1+\gamma}\,(1-\alpha)\\
&=\left(1+\frac{\delta-r_2}{1-\delta}\right)\exp(x_1+r_2-\delta)=\frac{1-r_2}{1-\delta}\exp(x_1+r_2-\delta)=B^+_{\delta^+(\ve)}(x).
\end{align*}
Similarly,
$$
\av{e^{\tilde{\varphi}_-}}I=B^-_{\delta^-(\ve)}(x).
$$
We observe that $\tilde{\psi}$ (and so $\tilde{\varphi}_\pm$) does not in general belong to
$\BMO^d_\ve,$ since the jumps in the scaled function $\varphi_0$ are not at dyadic nodes for
an arbitrary $\alpha.$ We overcome this problem by constructing a rearrangement of
$\tilde{\psi}$ that belongs to $\BMO^d_\ve,$ while preserving the necessary averages. Namely,
let $\alpha_n$ be the $n\!$-th digit in the dyadic representation of $\alpha$ (we will assume
this representation is infinite, completing the sequence with zeros if needed). We define
$\psi$ as follows
\eq[7.1]{
\psi(t)=\sum_{k=1}^\infty\left\{
\alpha_k\left[\varphi_0(2^kt-1)+\beta\right]+
(1-\alpha_k)\gamma\right\}\chi_{(2^{-k},2^{-k+1})}. 
}
Naturally, we set
$$
\varphi_\pm=x_1\pm\psi.
$$
Then for any function $\mu$ we have
\begin{align*}
\av{\mu\circ\psi}I&=\sum_{k=1}^\infty\left\{\alpha_k \av{\mu(\varphi_0(2^kt-1)+\beta)}{(2^{-k},2^{-k+1})}+\mu(\gamma)(1-\alpha_k)\right\}2^{-k}\\
&=\sum_{k=1}^\infty\left\{\av{\mu(\varphi_0(t)+\beta)}I\alpha_k+\mu(\gamma)(1-\alpha_k)\right\}2^{-k}\\
&=\av{\mu\circ(\varphi_0+\beta)}I\alpha+\mu(\gamma)(1-\alpha).
\end{align*}
This calculation, with the appropriate choice of $\mu$ and the reasoning used above for
$\tilde{\varphi}_\pm,$ gives $\av{\varphi_\pm}I=x_1,$
$\av{\varphi_\pm^2}I=x_2,$ $\av{e^{\varphi_\pm}}I=B^\pm_{\delta^\pm}(x).$ It remains to check that
$\|\psi\|_{\BMO^d(I)}=\ve.$ This will immediately imply that
$\|\varphi_\pm\|_{\BMO^d(I)}=\ve.$

Take any (open) dyadic interval $J\subset I.$ We have the following trichotomy
\ben
\item
$J\subseteq (2^{-n},2^{-n+1})$ for a certain $n$ and $\alpha_n=0.$ Then
$\left.\psi\right|_J=\gamma$ and $\av{\psi^2}J-\av{\psi}J^2=0.$
\item
$J\subseteq
(2^{-n},2^{-n+1})$ for a certain $n$ and $\alpha_n=1.$ Then
$\psi(t)=\varphi_0(2^nt-1),\;\forall t\in J$ and $\av{\psi^2}J-\av{\psi}J^2\le\ve^2$ (see the
detailed consideration for $\varphi_0$ above). Also, if $J=(2^{-n},2^{-n+1}),$ then
$\av{\psi^2}J-\av{\psi}J^2=\ve^2.$
\item
$J=(0,2^{-n})$ for a certain $n.$ Then
$$
\psi(t)=
\sum_{k=n+1}^\infty\left\{ \alpha_k\left[\varphi_0(2^kt-1)+\beta\right]+
(1-\alpha_k)\gamma\right\}\chi_{(2^{-k},2^{-k+1})}
$$
So
$$
\av{\psi}J=\frac1{|J|}
\sum_{k=n+1}^\infty\left\{\alpha_k\av{\varphi_0+\beta}I+\gamma(1-\alpha_k)\right\}2^{-k}=\beta p+\gamma(1-p)
$$
and
$$
\av{\psi^2}J=(\ve^2+\beta^2)p+\gamma^2(1-p),
$$
where $p=2^n\sum_{k=n+1}^\infty \alpha_k2^{-k}.$ We have $\av{\psi^2}J-\av{\psi}J^2=
p\left[\ve^2+(\beta-\gamma)^2(1-p)\right]\df\eta(p).$ We maximize $\eta$ subject to the
constraint $0\le p\le1.$ Since
\begin{align*}
\eta'(p)&=\ve^2+(\beta-\gamma)^2(1-2p)\ge\ve^2-(\beta-\gamma)^2\\
&=\ve^2-\left(\delta-\rtde\right)^2=2\rtde\left(\delta-\rtde\right)\ge0,
\end{align*}
we have $\ds\av{\varphi^2}J-\av{\varphi}J^2\le\eta(1)=\ve^2.$ This completes the
proof of the lemma. \qedhere
\een
\end{proof}
\begin{lemd}
\label{l2d}
For every $x\in\Oe,$
\eq[t15d]{
\bel{B}^{d+}_{\ve}(x)\le
B^+_{\delta^+(\ve)}(x);~~~\bel{B}^{d-}_{\ve}(x)\ge B^-_{\delta^-(\ve)}(x)
}
\end{lemd}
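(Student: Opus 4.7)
The plan is to mimic Lemma~\ref{l2c}, with two modifications. First, the variable stopping-time split of Lemma~\ref{l4c} is replaced by the trivial dyadic halving $\alpha_\pm=\tfrac12$. Second, the local concavity of Lemma~\ref{l3c} must be upgraded to a midpoint inequality. This is a genuine strengthening: because $\Omega_{\delta^\pm(\varepsilon)}$ is nonconvex, the chord $[x^-,x^+]$ can leave the domain of definition of $B^+_{\delta^+(\varepsilon)}$ even when $x^\pm$ and their midpoint all lie in $\Omega_\varepsilon$, so the continuous concavity does not apply directly.

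The first step is to prove the dyadic concavity: whenever $x^\pm\in\Omega_\varepsilon$ and $x^0=\tfrac12(x^-+x^+)\in\Omega_\varepsilon$,
\[
B^+_{\delta^+(\varepsilon)}(x^0)\ge\tfrac12\bigl(B^+_{\delta^+(\varepsilon)}(x^-)+B^+_{\delta^+(\varepsilon)}(x^+)\bigr),
\]
with the reverse inequality for $B^-_{\delta^-(\varepsilon)}$. I would exploit the affineness of $B^+_\delta$ along the characteristic segments $\omega^+_\delta(c)$ (see Section~\ref{howtoextc}): sliding $x^\pm$ along their respective $\omega^+$-lines keeps $B^+_\delta(x^\pm)$ constant while moving $x^0$, and iterating reduces the problem to the finite-parameter family where each of $x^-,x^+,x^0$ sits on $\partial\Omega_\varepsilon$. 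The extremal configuration is $x^-$ on the lower parabola $x_2=x_1^2$, $x^+$ on the upper parabola $x_2=x_1^2+\varepsilon^2$, and $x^0$ again on the upper parabola, so that $|x^+_1-x^-_1|=\varepsilon\sqrt 2$. Substituting into the midpoint inequality and demanding equality produces exactly the transcendental equation~\eqref{t11}; the dual computation with one endpoint on each parabola yields~\eqref{t11d}.

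Given the dyadic concavity, the rest is routine. For $\varphi\in\BMO^d_\varepsilon(I)$ I iterate the dyadic halving, producing the tree of averages $x^{n,k}=(\langle\varphi\rangle_{I^{n,k}},\langle\varphi^2\rangle_{I^{n,k}})$. Since $\varphi|_{I^{n,k}}\in\BMO^d_\varepsilon(I^{n,k})$ and $\varphi\in\BMO^d_\varepsilon(I^{n-1,\lfloor k/2\rfloor})$, each $x^{n,k}$ and each relevant midpoint lie in $\Omega_\varepsilon$, so the dyadic concavity applies at every node and yields
\[
B^+_{\delta^+(\varepsilon)}(x^{0,0})\ge\frac1{|I|}\sum_{k=0}^{2^n-1}|I^{n,k}|B^+_{\delta^+(\varepsilon)}(x^{n,k})=\frac1{|I|}\int_I e^{\varphi_n(s)}b_+(s_n(s))\,ds,
\]
exactly parallel to~\eqref{t17}, and similarly for $B^-_{\delta^-(\varepsilon)}$ mirroring~\eqref{t17-}. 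No enlargement $\varepsilon_1>\varepsilon$ is needed because the midpoint inequality is formulated directly on $\Omega_\varepsilon$. Martingale convergence on the dyadic filtration gives $\varphi_n\to\varphi$ and $s_n\to 0$ a.e., and the cutoff reduction $\psi_m=\min(\psi,m)$ from the proof of Lemma~\ref{l2c} carries over verbatim, since the variance identity used there involves only dyadic subintervals. Dominated and monotone convergence deliver $B^-_{\delta^-(\varepsilon)}(x)\le\tfrac1{|I|}\int_I e^{\varphi}\le B^+_{\delta^+(\varepsilon)}(x)$, and taking sup/inf over admissible $\varphi$ finishes the proof of~\eqref{t15d}.

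The main obstacle is the dyadic concavity of the first step. Unlike Lemma~\ref{l3c}, it is not an infinitesimal statement and does not reduce to checking a pointwise nonnegative Hessian: the chord $[x^-,x^+]$ can protrude out of $\Omega_{\delta^+(\varepsilon)}$ entirely, so the standard concavity of $B^+_{\delta^+(\varepsilon)}$ within its domain of definition is unusable. The boundary reduction via the $\omega^+$-line sliding, and the computation pinning the extremal case to~\eqref{t11}, is precisely where the specific value $\delta=\delta^+(\varepsilon)$ is forced: any smaller $\delta$ would violate the midpoint inequality in the critical configuration identified above.
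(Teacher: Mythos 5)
Your overall plan is exactly the paper's: replace the variable stopping-time split by the trivial dyadic halving, upgrade local concavity to a midpoint inequality stated on $\Oe$, identify the extremal configuration $x^-$ on $x_2=x_1^2$, $x^+$ and $x^0$ on $x_2=x_1^2+\ve^2$ with $|x^+_1-x^-_1|=\sqrt2\,\ve$, derive~\eqref{t11} and~\eqref{t11d}, and then run the Bellman induction verbatim. The induction and convergence parts of your argument match the paper and are correct as written.

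The gap is in the proof of the midpoint concavity, which is the entire substance of the lemma (the paper devotes Lemma~\ref{l3d} together with Propositions~\ref{pr2}--\ref{pr6} and Observations~\ref{obs1}--\ref{obs2} to it). Your proposed mechanism --- slide $x^\pm$ along their characteristic $\omega^+$-lines so that $B^+_\delta(x^\pm)$ stays constant and thereby reduce to a boundary configuration --- does not work as stated. When you slide $x^-$ along \emph{its} $\omega^+$-line, the midpoint $x^0$ moves parallel to that line, but the $\omega^+$-line through $x^0$ is in general a \emph{different} tangent to $x_2=x_1^2+\delta^2$, so $B^+_\delta(x^0)$ does not vary affinely and its sign of change is not controlled; the deficit $2B^+_\delta(x^0)-B^+_\delta(x^-)-B^+_\delta(x^+)$ need not decrease. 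The sliding picture is exactly what the paper's Lagrange-multiplier analysis makes rigorous: the interior critical points of $f_\delta$ on the constraint set $S_{\delta,\ve,a}$ are shown to give $f_\delta=0$ (collinear configurations on one $\omega^+$-line), edges $e_1$--$e_4$ are handled with the geometric Observation~\ref{obs1} to control signs, and only a single ``corner'' can be negative; one must then separately vary $a$ (Proposition~\ref{pr5}) and finally prove existence and uniqueness of the root of $g(\delta,\ve)=0$ on $(\ve,1)$ (Proposition~\ref{pr6}) --- a nontrivial step you do not address, without which $\delta^+(\ve)$ is not even well defined. In short: your identification of what must be proved and of the extremal configuration is correct, but the reduction you sketch is not a proof, and the rest of your argument inherits the gap.
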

\begin{proof}
We follow the template of Lemma~\ref{l2c}. As in the continuous case, we have a concavity-type
result, Lemma~\ref{l3d}, allowing us to use the induction on the order of the dyadic generation to
construct an integral sum for $\av{e^\varphi}I.$ Lemma~\ref{l4c}, the splitting lemma, cannot have a
dyadic analog, since in the dyadic setting an interval is always split in half. This lack of
splitting flexibility forces us to use a Bellman function candidate satisfying a stronger
concavity (convexity) condition. Namely, the following two inequalities are true.
\begin{lemd}
\label{l3d}
\eq[t35]{
B^+_{\delta^+(\ve)}\left(\frac12x^-+\frac12x^+\right)\ge\frac12B^+_{\delta^+(\ve)}(x^-)+\frac12B^+_{\delta^+(\ve)}(x^+)
}
\eq[t35-]{
B^-_{\delta^-(\ve)}\left(\frac12x^-+\frac12x^+\right)\le\frac12B^-_{\delta^-(\ve)}(x^-)+\frac12B^-_{\delta^-(\ve)}(x^+)
}
for any straight-line segment with the endpoints $x^\pm\in\Oe$ such that
$(x^-+x^+)/2\in\Oe.$
\end{lemd}
Assuming this lemma for the time being, take $\varphi\in \BMO^d_\ve(I).$ Observe that
$\varphi\in \BMO^d_\ve(J)$ for any dyadic subinterval $J$ of $I.$ Let $I^{0,0}=I$ and let
$I^{n,m}$ be the $m\!$-th interval of the $n\!$-th generation in the dyadic lattice based on
$I.$ Let $x^{n,m}=\left(\av{\varphi}{I^{n,m}},\av{\varphi^2}{I^{n,m}}\right).$ The argument
of Lemma~\ref{l2c} now translates verbatim to the dyadic case. For the sake of completeness we
repeat its major points. Using \eqref{t35} from Lemma~\ref{l3d} repeatedly, we get
\begin{eqnarray}
\nonumber B^+_{\delta^+(\ve)}(x^{0,0})&\ge&\ds
\frac12 B^+_{\delta^+(\ve)}(x^{1,0})+\frac12 B^+_{\delta^+(\ve)}(x^{1,1})\\
\nonumber &&\\
\label{t17d} &\ge&\ds \frac14 B^+_{\delta^+(\ve)}(x^{2,0})+ \frac14
B^+_{\delta^+(\ve)}(x^{2,1})+ \frac14 B^+_{\delta^+(\ve)}(x^{2,2})+
\frac14 B^+_{\delta^+(\ve)}(x^{2,3})\\
\nonumber &&\\
\nonumber &\ge&\ds \frac1{2^n}\sum_{m=0}^{2^n-1}B^+_{\delta^+(\ve)}(x^{n,m})=\frac1{|I|}\int_I e^{\varphi_n(s)}b_+(s_n(s))\,ds,
\end{eqnarray}
where $\varphi_n$ and $s_n$ are the same step functions that appeared in the proof of
Lemma~\ref{l2c}: $\varphi_n(s)=x_1^{n,k}$ and $s_n(s)=x_2^{n,k}-(x_1^{n,k})^2$ for $s\in I^{n,k}.$
Function $b_+$ also has a meaning similar to that in the proof of Lemma~\ref{l2c}:
$$
b_+(t)=\frac{1-\sqrt{\delta^+(\ve)^2-t}}{1-\delta^+(\ve)}\exp\left(\sqrt{\delta^+(\ve)^2-t}-\delta^+(\ve)\right).
$$
The last equality is just the statement
$B^+_{\delta^+(\ve)}(x^{n,k})=e^{\varphi_n(s)}b_+(s_n(s)),\, s\in I^{n,k}.$

Likewise, applying \eqref{t35-} repeatedly, we obtain
$$
B^-_{\delta^-(\ve)}(x^{0,0})\le
\frac1{|I^{0,0}|}\sum_{k=0}^{2^n-1}|I^{n,k}|B^-_{\delta^-(\ve)}(x^{n,k})=\frac1{|I|}\int_I
e^{\varphi_n(s)}b_-(s_n(s))\,ds.
$$
Here
$$
b_-(t)=\frac{1+\sqrt{\delta^-(\ve)^2-t}}{1+\delta^-(\ve)}\exp\left(-\sqrt{\delta^-(\ve)^2-t}+\delta^-(\ve)\right).
$$
The technical convergence arguments of Lemma~\ref{l2c} completely carry over to the dyadic case (the
quasi-Haar system in the proof of Lemma~\ref{l2c} now becomes the usual Haar system) and we obtain
$$
B^-_{\delta^-(\ve)}(\av{\psi}I,\av{\psi^2}I)\le
\frac1{|I|}\int_I e^{\psi(s)}\,ds\le
B^+_{\delta^+(\ve)}(\av{\psi}I,\av{\psi^2}I).
$$
Taking first supremum and then infimum over all $\psi\in \BMO^d_\ve(I)$ with
$\av{\psi}I=x_1$ and $\av{\psi^2}I=x_2,$ we obtain the inequalities
$$
B^+_{\delta^+(\ve)}(x)\ge \bel{B}^{d+}_{\ve}(x),\qquad B^-_{\delta^-(\ve)}(x)\le
\bel{B}^{d-}_{\ve}(x),
$$
thus proving the lemma.
\end{proof}

\noindent{\it Proof of Lemma~\ref{l3d}.}
We will first prove the ``concavity'' result for $B^+,$ i.e.
inequality \eqref{t35}, and then indicate what changes are needed in the case of $B^-.$ To
simplify notation, we will use $B,\bel{B},$ and $\delta$ without the superscript $\pm$ when
the context is unambiguous.

{\it Proof of \eqref{t35}.} We prove the inequality in the most constructive manner: for
every $\ve$ we will choose the smallest $\delta$ so that the statement of the lemma holds.
From the proof of Lemma~\ref{l1d}, it is clear that $\delta(\ve)>\ve.$

One straightforward approach would be to choose $\delta(\ve)$ large enough so that any
straight-line segment $[x^-,x^+]$ with $x^-,x^+,x^0\in\Oe$ would fit entirely inside
$\Omega_{\delta(\ve)}.$ The statement of Lemma~\ref{l3d} would then follow from Lemma~\ref{l3c}. Let us
investigate how large the $\delta(\ve)$ so chosen would be with regard to $\ve.$
\begin{prop}
\label{pr2}
If $\ve\le\frac{2\sqrt2}3\delta,$ then the segment $[x^-,x^+]$ lies entirely in $\Od,$ for
all $x^-,x^+\in\Oe$ such that $\frac12x^-+\frac12x^+\in\Oe.$
\end{prop}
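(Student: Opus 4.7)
The plan is to parametrize the segment as $x(t) = (1-t)x^- + tx^+$, $t \in [0,1]$, and verify the two inequalities $x_1(t)^2 \le x_2(t) \le x_1(t)^2 + \delta^2$ defining membership in $\Od$. The lower bound is automatic: since $x_2^\pm \ge (x_1^\pm)^2$, linearity of $x_2(t)$ and convexity of $s \mapsto s^2$ give $x_2(t) \ge (1-t)(x_1^-)^2 + t(x_1^+)^2 \ge x_1(t)^2$. So the entire content is the upper bound on $f(t) := x_2(t) - x_1(t)^2$.

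Direct expansion yields the identity
$$
f(t) = (1-t) f(0) + t f(1) + t(1-t)(x_1^+ - x_1^-)^2.
$$
Writing $u = f(0)$, $v = f(1)$, $a = x_1^+ - x_1^-$, the hypothesis $x^\pm \in \Oe$ becomes $u, v \in [0, \ve^2]$, while $\frac12 x^- + \frac12 x^+ \in \Oe$ is exactly $f(1/2) = (u+v)/2 + a^2/4 \le \ve^2$. The goal is then to show $\max_{t \in [0,1]} f(t) \le 9\ve^2/8$ under these constraints, since $9\ve^2/8 \le \delta^2$ is the same as $\ve \le \frac{2\sqrt2}{3}\delta$.

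The unconstrained maximizer of $f$ sits at $t^* = \frac12 + (v-u)/(2a^2)$. When $t^* \notin [0,1]$, the maximum on $[0,1]$ is just $\max(u,v) \le \ve^2$, well below the target. In the interior case ($|v-u| \le a^2$) the maximum equals
$$
\frac{u+v}{2} + \frac{a^2}{4} + \frac{(v-u)^2}{4a^2}.
$$
A derivative check shows this is nondecreasing in $a^2$ on the admissible range $[|v-u|, 4(\ve^2 - m)]$, with $m := (u+v)/2$, so it is largest when the midpoint constraint is saturated, $a^2 = 4(\ve^2 - m)$, and it then reduces to $\ve^2 + (v-u)^2/(16(\ve^2 - m))$. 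The endpoint constraints $u, v \in [0, \ve^2]$ force $|v-u| \le 2\min(m, \ve^2 - m)$; substituting this and splitting cases at $m = \ve^2/2$ produces the sharp bound $\ve^2 + \ve^2/8 = 9\ve^2/8$, attained at $u = 0$, $v = \ve^2$, $a^2 = 2\ve^2$ (so $x^-$ and $x^+$ sit on the two parabolas bounding $\Oe$).

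The only obstacle is the optimization above: once one writes down the quadratic identity for $f(t)$, the rest is a one-variable exercise, but a crude estimate $(v-u)^2/(4a^2) \le a^2/4 \le \ve^2$ only yields the weaker constant $2\ve^2$ (equivalently $\ve \le \delta/\sqrt2$); the case split on $m$ is what extracts the sharp constant $9/8$.
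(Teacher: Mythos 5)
Your proof is correct, and it takes a genuinely different route from the paper's. The paper uses a geometric reduction first: since $x^0\in\Oe$, the part of the segment outside $\Oe$ sits between $x^0$ and one endpoint (say $x^-$), so one may replace $[x^-,x^+]$ with the shorter segment $[a,2b-a]$, putting $x^-$ and $x^0$ on the upper parabola $x_2=x_1^2+\ve^2$ while only requiring $x^+$ to sit above $x_2=x_1^2$. This kills most of the degrees of freedom up front; the resulting $\tau(t)$ is $\ve^2+\frac12(x_1^-+x_1^+)^2(t-2t^2)$, whose maximum is read off at $t=1/4$, and the constraint $(x_1^-+x_1^+)^2\le 2\ve^2$ gives the bound $\frac98\ve^2$ immediately. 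You instead keep all parameters $u=f(0)$, $v=f(1)$, $a^2$ and carry out the full constrained optimization: the quadratic identity $f(t)=(1-t)u+tv+t(1-t)a^2$, monotonicity in $a^2$ up to the midpoint constraint, and a case split at $m=\ve^2/2$ to exhaust the constraint $|v-u|\le 2\min(m,\ve^2-m)$. Both arrive at the sharp constant $9/8$ and identify the extremal configuration (endpoints on the two parabolas, midpoint on the top one, $a^2=2\ve^2$). The paper's reduction is slicker and shorter; yours is more mechanical and perhaps more robust, since it does not require the geometric observation about where the segment exits $\Oe$, and it makes the sharpness and the equality case transparent from the optimization itself.
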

\begin{proof}
We only need to consider those segments $[x^-,x^+]$ that have points outside $\Oe,$ because
otherwise $[x^-,x^+]\subset\Oe\subset\Od.$ Parameterize the points of $[x^-,x^+]$ as follows
$$
x(t)=(1-t)x^-+tx^+.
$$
Then we need to check that for the function
$$
\tau(t)=x_2(t)-x_1^2(t),\quad 0\le t\le1,
$$
the inequality $\tau(t)\le\delta^2$ holds.

Denote by $a$ and $b$ the points of intersection of the segment $[x^-,x^+]$ with the upper
boundary of $\Oe,$ the parabola $x_2=x_1^2+\ve^2.$ Since
$\frac12x^-+\frac12x^+\in\Oe,$ the segment $[a,b]$ lies between this point and one of the
endpoints $x^\pm.$ Let us call this endpoint $x^-.$ Since $\tau(t)\le\ve^2$ for $x(t)\in\Oe,$
we have
$$
\max_{x(t)\in[x^-,x^+]}\tau(t)=\max_{x(t)\in[a,b]}\tau(t).
$$
Therefore, instead of the initial segment $[x^-,x^+],$ it is sufficient to consider the shorter
segment $[a,2b-a].$ This means that without loss of generality we may assume the points $x^-$
and $\frac12x^-+\frac12x^+$ to be on the upper bound of $\Oe,$ i.~e.,
\eq[prop2-1]{
x_2^-=(x_1^-)^2+\ve^2,
}
\eq[prop2-2]{
\frac12(x_2^-+x_2^+)=\frac14(x_1^-+x_1^+)^2+\ve^2.
}
From~\eqref{prop2-1} and~\eqref{prop2-2} we get
$$
x_2^+=\frac12\bigl((x_1^+)^2-(x_1^-)^2\bigr)+x_1^-x_1^++\ve^2.
$$
Since $x^+\in\Oe,$ we have
the restriction $x_2^+\ge(x_1^+)^2,$ which is equivalent to the inequality
\eq[prop2-4]{
(x_1^-+x_1^+)^2\le2\ve^2.
}
Now, calculate $\max\tau(t)$:
\[
\begin{split}
\tau(t)&=x_2(t)-x_1^2(t)
\\
&=\bigl[(1-t)x_2^-+tx_2^+\bigr]-\bigl[(1-t)x_1^-+tx_1^+\bigr]^2
\\
&=\ve^2+\frac12(x_1^-+x_1^+)^2(t-2t^2).
\end{split}
\]
This function attains its maximum at $t=\frac14,$ so
$$
\max\tau(t)=\ve^2+\frac1{16}(x_1^-+x_1^+)^2.
$$
Taking into account inequality~\eqref{prop2-4} we get
$$
\max\tau(t)\le\frac98\ve^2\le\delta^2.
$$
This means $[x^-,x^+]\subset\Od,$ as claimed.
\end{proof}
Applying now Lemma~\ref{l3c}, we obtain 
\eq[t341]{
B_\delta\Bigl(\frac12x^-+\frac12x^+\Bigr)\ge\frac12B_\delta(x^-)+\frac12B_\delta(x^+),
}
as long as the triple $x^-,x^+,\frac12x^-+\frac12x^+\in\Od.$
We observe that if $\ve<\frac{2\sqrt2}3,$ then we can run
the machine of Lemma~\ref{l2d} to establish that 
$$
B_{\frac3{2\sqrt2}\ve}(x)\ge\bel{B}^d_{\ve}(x),~~\forall x\in\Oe. 
$$ 
Together with Lemma~\ref{l1d},
this gives us the following estimates 
\eq[t33]{ 
\frac{2\sqrt2}3\le\ve_{0}^d\le\sqrt2\log 2 
}
and 
\eq[t34]{ 
\delta(\ve)\le \frac3{2\sqrt2}\,\ve. 
} 
The rest of the $B^+$ part of the proof
of Lemma~\ref{l3d} is devoted to bridging the gap in \eqref{t33}.

So far, we have been trying to ensure that the segment $[x^-,x^+]$ lies inside the domain of
concavity of a certain function $B,$ so that we can then infer \eqref{t341}. Now, we try to
enforce that condition directly instead.

Since we are searching for $\delta(\ve)$ such that $\bel{B}^d_{\ve}=B_{\delta(\ve)},$ we
attempt to solve the extremal problem
\eq[t36]{
\begin{array}{ll}
\ds \delta(\ve)=\min_{\ve<\delta <1}
&\ds \left\{\delta:~B_{\delta}(x^0)\ge\frac12B_{\delta}(x^-)+\frac12B_{\delta}(x^+),\right.\\
&\ds \left.~\forall x^-,x^+\in\Oe~\text{such~that}~x^0=\frac12x^-+\frac12x^+\in\Oe\right\}.
\end{array}
}
We can simplify this formulation by
observing that we can, without loss of generality, set $x_1^0=0.$ Indeed, consider the change of variables
$$
\begin{array}{l}
\tilde{x}_1=x_1-x_1^0;\\
\tilde{x}_2=x_2-2x_1x_1^0+(x_1^0)^2=x_2-x_1^2+\tilde{x}_1^2.
\end{array}
$$
Then $\tilde{x}_2-\tilde{x}_1^2=x_2-x_1^2,$ i.e. the point $\tilde{x}$ belongs to $\Oe$ (or $\Od$) if and only if $x$ does. Furthermore,
condition
\eqref{t35} is equivalent to
\begin{align}
\label{t351} \notag F_\delta(x^-,x^+,x^0)&\df 2\left(1-\sqrt{\delta^2+(x^0_1)^2-x^0_2}\right)
\exp\left(\sqrt{\delta^2+(x^0_1)^2-x^0_2}\right)\\
&-\left(1-\sqrt{\delta^2+(x^-_1)^2-x^-_2}\right)\exp\left(\frac{x^-_1-x^+_1}2+\sqrt{\delta^2+(x^-_1)^2-x^-_2}\right)\\
\notag&-\left(1-\sqrt{\delta^2+(x^+_1)^2-x^+_2}\right)\exp\left(\frac{x^+_1-x^-_1}2+\sqrt{\delta^2+(x^+_1)^2-x^+_2}\right)\\
\notag&=F_\delta(\tilde{x}^-,\tilde{x}^+,0)\ge0.
\end{align}
Due to the ensuing symmetry we can also assume $x_1^+\ge0.$

Now, let
\eq[t352]{
a=\sqrt{\delta^2-x^0_2},\qquad
a_\pm=\sqrt{\delta^2+\left(x^\pm_1\right)^2-x^\pm_2},\qquad
\theta=x^+_1. 
}
Geometrically, $a$
and $a_\pm$ are the square roots of the vertical distances from $x$ and $x^\pm$ to the
parabola $x_2=x_1^2+\delta^2,$ as shown on Fig.~\ref{f13}. 
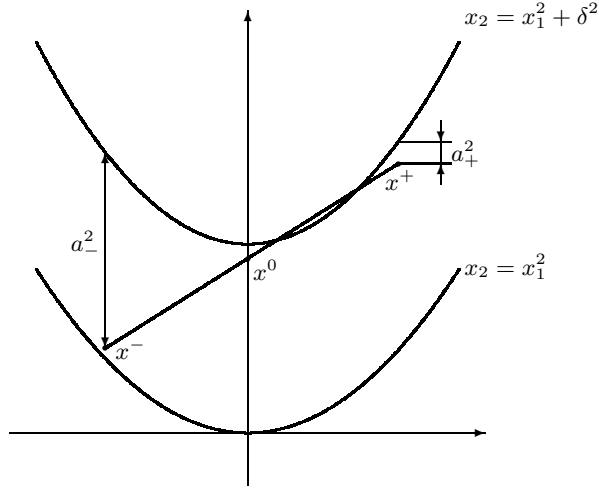
\begin{figure}[ht]
\begin{center}
\begin{picture}(200,200)
\thinlines
\put(100,0){\vector(0,1){180}}
\put(10,20){\vector(1,0){180}}
\put(46,89){\vector(0,1){37}}
\put(46,89){\vector(0,-1){37}}
\put(157,122){\line(1,0){20}}
\put(157,130){\line(1,0){20}}
\put(173,122){\line(0,1){8}}
\put(173,138){\vector(0,-1){8}}
\put(173,114){\vector(0,1){8}}
\linethickness{.7pt}
\qbezier[1000](20,82)(100,-42)(180,82)
\qbezier[1000](20,168)(100,15)(180,168)
\qbezier[1000](46,52)(46,52)(157,122)
\put(46,52){\circle*{2}}
\put(100,86){\circle*{2}}
\put(157,122){\circle*{2}}
\put(48,48){\footnotesize $x^-$}
\put(100,78){\footnotesize $x^0$}
\put(150,112){\footnotesize $x^+$}
\put(31,89){\footnotesize $a^2_-$}
\put(175,123){\footnotesize $a^2_+$}
\put(180,80){\footnotesize $x_2=x_1^2$}
\put(180,175){\footnotesize $x_2=x_1^2+\delta^2$}
\end{picture}
\end{center}
\caption{Geometrical meaning of $a_-$ and $a_+.$}
\label{f13}
\end{figure}
Using this notation, we can rewrite the condition $F_\delta(x^-,x^+,x^0)\ge0$ as
\eq[t353]{
f_\delta(a,a_-,a_+,\theta)\df 2(1-a)e^{a}-(1-a_-)e^{-\theta+a_-}-(1-a_+)e^{\theta+a_+}\ge0
}
(we will omit the index $\delta$ when the context is clear). A straightforward calculation
shows that $a_-^2+a_+^2=2a^2+2\theta^2.$ The condition $x,x_\pm\in\Oe$ can be rewritten as
$a,a_-,a_{+}\in[\rtde,\delta]$ and the condition $x_1^+\ge0$ becomes $\theta\ge0.$ Finally,
we observe that since
$(1-u)e^{-\theta+u}+(1-v)e^{\theta+v}\ge(1-v)e^{-\theta+v}+(1-u)e^{\theta+u}$ if $0\le v\le
u$ (see Proposition~\ref{pr1}), it suffices to consider the case $a_+\le a_-$ (equivalently,
$x_2^+\ge x_2^-$) when enforcing the condition $f_\delta(a,a_-,a_+,\theta)\ge0,$ i.e. we can
consider only those segments slanted upward. We are in a position to reformulate the extremal problem \eqref{t36} as follows

For $0<\ve<\sqrt2\log2$ and $\ve<\delta<1,$ let
$$
S_{\delta,\ve}=\ds\left\{(x,y,z,w)\in[\rtde,\delta]^3\times[0,\infty);~z\le
y;~y^2+z^2=2x^2+2w^2\right\}.
$$
Then
\eq[t39]{
m(\delta,\ve)=\min\left\{f(a,a_-,a_+,\theta):(a,a_-,a_+,\theta)\in S_{\delta,\ve}\right\},
}
\eq[t40]{
\delta(\ve)=\min\{\delta:m(\delta,\ve)\ge 0\}.
}
In addition, we will need the following notation
$$
S_{\delta,\ve,a}=S_{\delta,\ve}\cap\{x=a\};~~~m_a(\delta,\ve)=\min f|_{S_{\delta,\ve,a}}.
$$
While simplifying calculations, formulation \eqref{t39},~\eqref{t40} has a drawback: the
underlying geometry of segments in $\Oe$ and/or $\Od$ is obscured. For example, the fact that
$B_\delta$ is locally concave in $\Od$ and, hence, $F_\delta\ge0$ if the whole segment
$[x^-,x^+]$ lies in $\Od,$ will take a certain amount of effort to phrase in terms of the new
variables.
\subsection{Stage 1}
We first fix $a$ and collect several geometric observations.
\begin{prop}
\label{pr3}
If $a\in [\sqrt{\delta^2-\ve^2/2},\delta],$ then
$m_a(\delta,\ve)=0.$
\end{prop}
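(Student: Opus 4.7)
The plan is to prove $m_a(\delta,\ve)=0$ in two steps: first, exhibit a point of $S_{\delta,\ve,a}$ at which $f$ vanishes (so $m_a\le 0$), and then show $f\ge 0$ on all of $S_{\delta,\ve,a}$. The natural candidate zero is the diagonal point $(a,a_-,a_+,\theta)=(a,a,a,0)$: its membership in $S_{\delta,\ve,a}$ is immediate since $a$ itself lies in $[\rtde,\delta]$ and $a^2+a^2=2a^2+2\cdot 0^2$, and direct substitution gives $f(a,a,a,0)=2(1-a)e^{a}-2(1-a)e^{a}=0$. So the content of the proposition is the matching lower bound $f\ge 0$.

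To establish the lower bound, I would undo the change of variables~\eqref{t352} to return to the geometric picture. A point $(a,a_-,a_+,\theta)\in S_{\delta,\ve,a}$ corresponds uniquely, under the normalization $x_1^0=0$, to a triple $(x^-,x^0,x^+)$ in $\Oe$ with $x_2^0=\delta^2-a^2$, $x_1^\pm=\pm\theta$, $x_2^\pm=\delta^2+\theta^2-a_\pm^2$ and $x^0=\tfrac12(x^-+x^+)$; moreover $f(a,a_-,a_+,\theta)=F_\delta(x^-,x^+,x^0)$ by the algebra preceding \eqref{t352}. Since Lemma~\ref{l3c} gives local concavity of $B_\delta$ on $\Od$, it will suffice to prove that, under the hypothesis $a\ge\sqrt{\delta^2-\ve^2/2}$, every such segment $[x^-,x^+]$ lies entirely in $\Od$; then $F_\delta(x^-,x^+,x^0)\ge 0$ is immediate.

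For the geometric verification I would study $\tau(t)=x_2(t)-x_1^2(t)$ along the parametrization $x(t)=(1-t)x^-+tx^+$. It is a concave quadratic in $t$ whose unconstrained maximizer is $t^*=\tfrac12+(x_2^+-x_2^-)/(8\theta^2)$, and $t^*\in[0,1]$ precisely when $|a_-^2-a_+^2|\le 4\theta^2$. If $t^*\notin[0,1]$, the max of $\tau$ on the segment is attained at an endpoint $x^\pm$, where $\tau\le\ve^2<\delta^2$ follows from $x^\pm\in\Oe$. Otherwise, a short computation gives
\[
\tau(t^*)=(\delta^2-a^2)+\frac{(a_-^2-a_+^2)^2}{16\theta^2}\le(\delta^2-a^2)+\theta^2,
\]
using $(a_-^2-a_+^2)^2\le 16\theta^4$ in this case. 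So the required bound $\tau(t^*)\le\delta^2$ reduces to $\theta^2\le a^2$; and since $a_\pm^2\le\delta^2$ together with $a_-^2+a_+^2=2(a^2+\theta^2)$ yield $\theta^2\le\delta^2-a^2\le\ve^2/2$ under the hypothesis on $a$, while $a^2\ge\delta^2-\ve^2/2\ge\ve^2/2$ because $\delta>\ve$ (noted just above the proposition), we conclude $\theta^2\le\ve^2/2\le a^2$, as needed.

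The argument is essentially a direct calculation once the geometric reformulation is in place, so there is no serious obstacle. The only subtlety worth flagging is the case split on whether the parabolic maximum of $\tau$ is interior to $[0,1]$; it is precisely in the interior case that the hypothesis $a\ge\sqrt{\delta^2-\ve^2/2}$ is genuinely used.
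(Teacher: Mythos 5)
Your argument is correct, and the overall skeleton (exhibit the vanishing point $(a,a,a,0)$, then show $f\ge0$ by geometric concavity of $B_\delta$) is the same as the paper's. But you take a longer road through the concavity step. The paper observes that under the hypothesis, $\ve^2\ge 2(\delta^2-a^2)=2x_2^0=x_2^-+x_2^+$, hence both $x_2^\pm\le\ve^2$; since $x_2(t)$ is linear it stays $\le\ve^2$ along the segment, and since $x_1(t)^2\ge0$ this gives $x_2(t)\le x_1(t)^2+\ve^2$, i.e.\ the whole segment $[x^-,x^+]$ actually lies in the \emph{smaller} domain $\Oe\subset\Od$. That one-line observation replaces your case split on $t^*\in[0,1]$ and the explicit computation of $\tau(t^*)$. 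Your version proves a weaker inclusion (segment in $\Od$ rather than $\Oe$), which is still sufficient once Lemma~\ref{l3c} is invoked, so nothing is wrong — you just re-derive by calculation a containment that can be seen directly. One small technicality worth closing in your write-up: when $\theta=0$ the formula for $t^*$ is undefined; in that degenerate case $x_1(t)\equiv0$, $\tau$ is linear, and the endpoint bound $\tau\le\ve^2$ applies, so it falls under your first alternative, but you should say so explicitly rather than let it be absorbed silently into "$t^*\notin[0,1]$."
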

\begin{proof}
Our assumption $a\ge \sqrt{\delta^2-\ve^2/2}$ can be reformulated as
$$
\ve^2\ge2(\delta^2-a^2)=2x_2^0=x_2^++x_2^-,
$$
hence $x_2^\pm\le\ve^2,$ so any segment $[x^-,x^+]$ with $x^-,x^+\in\Oe$ such that
$(x^-+x^+)/2=(0,\delta^2-a^2)$ lies in $\Oe.$ Therefore,
$B_\delta(x)\ge\frac12B_{\delta}(x^-)+\frac12B_{\delta}(x^+)$ or, equivalently,
$f(a,a_-,a_+,\theta)\ge0.$ Of course, if $a_-=a_+=a$ and $\theta=0,$ we have $f=0,$ which
completes the proof.
\end{proof}
\begin{obs}
\label{obs1}
If $x^-,x^+\in\Oe,$ $(x^-+x^+)/2=(0,\delta^2-a^2),$ and
$x^+_1<a+\sqrt{\delta^2-\ve^2},$ then the segment $[x^-,x^+]$ lies in $\Od.$
\end{obs}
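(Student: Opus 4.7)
I plan to parametrize the segment explicitly and verify that $\tau(t) := x_2(t) - x_1(t)^2$ stays at most $\delta^2$ throughout $[0,1]$, which is the defining condition for the segment to lie in $\Od$. Writing $\theta = x^+_1$ (so $x^-_1 = -\theta$ by the midpoint condition) and $p = x^-_2$, $q = x^+_2$ with $p + q = 2(\delta^2 - a^2)$ and $p, q \in [\theta^2, \theta^2 + \ve^2]$, a direct calculation shows $\tau$ is a concave quadratic in $t$, with endpoint values $\le \ve^2 < \delta^2$ and interior critical value $(\delta^2 - a^2) + (q-p)^2/(16\theta^2)$, attained precisely when $|q-p| \le 4\theta^2$. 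Hence the segment lies in $\Od$ if and only if either $|q-p| > 4\theta^2$ or $|q-p| \le 4a\theta$.

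The case $\theta \le a$ is immediate, since then $4a\theta \ge 4\theta^2$ and the two good intervals cover $[0,\infty)$. For $\theta > a$, I would bound the admissible $|q - p|$ by optimizing over $(p,q)$ under the box and midpoint constraints; the maximum splits into two regimes according to the sign of $\theta^2 - (\delta^2 - a^2 - \ve^2/2)$. In Case~i ($\theta^2 \le \delta^2 - a^2 - \ve^2/2$) the maximum is $2(\theta^2 + a^2 + \ve^2 - \delta^2)$, and the desired bound $\le 4a\theta$ rearranges to $(\theta - a)^2 \le \rtde^2$, which is exactly the hypothesis. In Case~ii the maximum is $2(\delta^2 - a^2 - \theta^2)$, and the needed inequality becomes $(\theta + a)^2 \ge \delta^2$, i.e.\ $\theta \ge \delta - a$.

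The main obstacle is Case~ii, where the conclusion does not follow immediately from the hypothesis. I would argue by contradiction: supposing $\theta + a < \delta$ and combining with the Case~ii defining inequality yields $\ve^2 > 4a(\delta - a)$, while the mere compatibility of Case~ii with the hypothesis (the lower bound $\sqrt{\delta^2 - a^2 - \ve^2/2}$ for $\theta$ must be less than the upper bound $a + \rtde$) requires $\ve^2 < 4a(a + \rtde)$. Analyzing the roots $(\delta \pm \rtde)/2$ of $4a^2 - 4a\delta + \ve^2 = 0$ and the root $(\delta - \rtde)/2$ of $4a^2 + 4a\rtde - \ve^2 = 0$, these combined constraints force $a > (\delta + \rtde)/2 > \delta/2$, giving $\theta > a > \delta - a$ and contradicting $\theta + a < \delta$; the subtlety is precisely in the interplay of the two quadratic-in-$a$ conditions on $\ve^2$ that extracts $\theta \ge \delta - a$ from the stated hypothesis.
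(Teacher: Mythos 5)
Your argument is sound in its architecture but takes a distinctly more computational route than the paper's. The paper's proof is a short slope comparison: the tangent from $x^0=(0,\delta^2-a^2)$ to the outer parabola $x_2=x_1^2+\delta^2$ has slope $2a$ and touches it at $(a,a^2+\delta^2)$, and it meets $x_2=x_1^2+\ve^2$ at $p_1=a+\rtde$; a chord through $x^0$ that leaves $\Od$ must have slope $>2a$ (since along a line of slope $m$ through $x^0$ the maximum of $x_2-x_1^2$ is $\delta^2-a^2+m^2/4$), and being steeper than the tangent pushes its re-entry into $\Oe$, hence $x^+$, strictly to the right of $p_1$. Your version replaces this picture with an explicit parametrization, the vertex value $\tau_{\max}=(\delta^2-a^2)+(q-p)^2/(16\theta^2)$, a reduction of the claim to $|q-p|\le4a\theta$ (once the vertex is interior), and an optimization of $|q-p|$ over the box and midpoint constraints with a two-regime case split. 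This is correct and even proves marginally more (it handles the worst $(p,q)$ for the given $\theta$), but the price is the Case~ii contradiction, which the paper's tangent argument simply sidesteps.

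There is one genuine gap in your Case~ii. The ``Condition~B'' inequality $\ve^2<4a(a+\rtde)$ is derived from $\sqrt{\delta^2-a^2-\ve^2/2}<a+\rtde$, which presupposes $\delta^2-a^2-\ve^2/2\ge0$, i.e.\ $a\le\sqrt{\delta^2-\ve^2/2}$. For $a>\sqrt{\delta^2-\ve^2/2}$ --- which the hypotheses permit, since the midpoint only forces $a\in[\rtde,\delta]$ --- the Case~ii lower bound on $\theta$ is vacuous and Condition~B is not available, so the stated contradiction stalls. The repair is one line: $\sqrt{\delta^2-\ve^2/2}\ge(\delta+\rtde)/2$ because the difference of squares is $(\delta-\rtde)^2/4\ge0$, so in this regime $a>\delta/2$ holds directly and $\theta>a>\delta-a$ again contradicts $\theta+a<\delta$. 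You should state this sub-case explicitly rather than tacitly assume the radicand is nonnegative.
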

\begin{proof}
To show this, consider the line through $x^0$ tangent to the parabola $x_2=x_1^2+\delta^2.$
The point of tangency is $t=(a,a^2+\delta^2)$ and the equation of the tangent is
\eq[t40.2]{
x_2=2ax_1+x_2^0.
}
\begin{figure}[ht]
\begin{center}
\begin{picture}(200,200)
\thinlines
\put(100,0){\vector(0,1){180}}
\put(10,20){\vector(1,0){180}}
\qbezier[1000](20,82)(100,-42)(180,82)
\qbezier[1000](20,162)(100,15)(180,162)
\qbezier[1000](20,168)(100,15)(180,168)
\qbezier[1000](46,52)(46,52)(159,123)
\qbezier[1000](10,41)(10,41)(190,131)
\put(46,52){\circle*{2}}
\put(100,86){\circle*{2}}
\put(159,123){\circle*{2}}
\put(138,105){\circle*{2}}
\put(124,98){\circle*{2}}
\put(48,48){\footnotesize $x^-$}
\put(100,78){\footnotesize $x^0$}
\put(160,122){\footnotesize $x^+$}
\put(134,97){\footnotesize $p$}
\put(117,102){\footnotesize $t$}
\put(180,80){\footnotesize $x_2=x_1^2$}
\put(180,160){\footnotesize $x_2=x_1^2+\ve^2$}
\put(180,175){\footnotesize $x_2=x_1^2+\delta^2$}
\put(190,130){\footnotesize $x_2=2ax_1+x^0_2$}
\end{picture}
\end{center}
\caption{A segment $[x^-,x^+]\nsubseteq\Od$ vs. the tangent to $x_2=x_2^2+\delta^2$}
\label{f14}
\end{figure}
Any segment $[x^-,x^+]$ that does not lie entirely in $\Od$ will have a slope higher than
that of this tangent, see Fig.~\ref{f14}. The segment's endpoint $x^+$ will then have to be to the right of the
point $p$ of intersection of the tangent \eqref{t40.2} and the parabola $x_2=x_1^2+\ve^2,$
i.e. we will have $x^+_1>p_1.$ Solving for $p_1,$ we get
$$
p_1^2+\ve^2=2ap_1+x_2^0,
$$
so $(p_1-a)^2=\delta^2-\ve^2.$ Since $p$ is to the right of $t,$ we have $p_1=a+\rtde,$
completing our observation.
\end{proof}
We now show that the only ``interesting'' (i.e. not obviously non-negative) minimum of $f$ can
happen at the ``corner'' $a_-=\delta,a_+=\rtde, \theta^2+a^2=\delta^2-\ve^2/2.$ More
precisely, we have the following proposition.
\begin{prop}
\label{pr4}
If $a\in[\rtde,\sqrt{\delta^2-\ve^2/2}),$ then
$$
m_a(\delta,\ve)=\min\{0,f(a,\delta,\rtde,\sqrt{\delta^2-\ve^2/2-a^2})\}.
$$
\end{prop}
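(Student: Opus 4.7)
The plan is to fix $a$ and localize the minimum of $f$ on $S_{\delta,\ve,a}$ in two stages: first analyzing interior critical points of $f$ restricted to the constraint surface, then showing that the remaining boundary minimum concentrates at the corner. To streamline the calculations, I will parametrize segments $[x^-,x^+]$ through $x^0=(0,\delta^2-a^2)$ by their half-width $\theta\ge 0$ and slope $m\ge0$, so that $x^\pm=(\pm\theta,\delta^2-a^2\pm m\theta)$ and accordingly $a_\pm^2=a^2+\theta^2\mp m\theta$.

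A direct computation yields $\partial_m f=\tfrac\theta2(e^{a_--\theta}-e^{a_++\theta})$, which vanishes exactly when $a_--a_+=2\theta$. Combined with the constraint $a_-^2+a_+^2=2a^2+2\theta^2$, this forces $a_-=a+\theta$, $a_+=a-\theta$ (which requires $\theta\le a$), i.e.\ $m=2a$; substituting back gives $f\equiv 0$ along this critical curve. Moreover $\partial_m f<0$ for $m<2a$ and $\partial_m f>0$ for $m>2a$. Hence, on each $\theta$-slice, the minimum of $f$ is either $0$ (at the interior critical point, when $m=2a$ lies in the allowed range) or is attained at the endpoint $m=m_{\max}(\theta)$ dictated by the box constraints $a_-\le\delta$ and $a_+\ge\rtde$.

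These constraints cap $m$ by $(\delta^2-a^2-\theta^2)/\theta$ and $(a^2+\theta^2-\delta^2+\ve^2)/\theta$ respectively; the smaller one determines $m_{\max}(\theta)$, producing two arcs joined at the corner $\theta_c=\sqrt{\delta^2-\ve^2/2-a^2}$, where both constraints are simultaneously active (so $a_-=\delta$, $a_+=\rtde$, exactly as in the proposition). The endpoints of the envelope are at $\theta=0$ (with $a_+=\rtde$, $a_-=\sqrt{2a^2-\delta^2+\ve^2}$) and at $\theta=\sqrt{\delta^2-a^2}$ (with $a_-=a_+=\delta$). At the latter, the segment $[x^-,x^+]$ is horizontal at height $\delta^2-a^2$ and lies entirely in $\Od$, so Lemma~\ref{l3c} gives $f\ge 0$; at the former, the identity $df/da_-=a_-(e^{a_-}-e^{a_+})\ge 0$ on the $\theta=0$ face (coupled with $f=0$ at $a_-=a_+=a$) yields $f\ge 0$ there as well.

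It therefore suffices to prove monotonicity of $f$ along each arc of the envelope: nonincreasing on $[0,\theta_c]$ along $a_+=\rtde$, and nondecreasing on $[\theta_c,\sqrt{\delta^2-a^2}]$ along $a_-=\delta$. Using $a_-a_-'=2\theta$ on the first arc, one computes $f'(\theta)=(1-a_-+2\theta)e^{a_--\theta}-(1-\rtde)e^{\rtde+\theta}$; establishing $f'(\theta)\le 0$ for $\theta\in[0,\theta_c]$ is the main technical obstacle, as it reduces to a pointwise inequality between two exponentials whose exponents and leading factors vary jointly with $\theta$ through the constraint $a_-^2=2a^2+2\theta^2-\delta^2+\ve^2$. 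An analogous derivative formula (with $a_-=\delta$ fixed and $a_+'=2\theta/a_+$) handles the second arc symmetrically. Putting all of this together, the minimum of $f$ on $S_{\delta,\ve,a}$ equals $\min\{0,\ f(a,\delta,\rtde,\theta_c)\}$, which is the claim.
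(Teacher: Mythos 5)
Your reparametrization by half-width $\theta$ and slope $m$, together with the computation $\partial_m f=\tfrac\theta2\bigl(e^{a_--\theta}-e^{a_++\theta}\bigr)$ giving the interior critical curve $m=2a$ with $f\equiv 0$, is correct and is an equivalent packaging of the paper's interior Lagrange computation. Your handling of the two trivial endpoints of the envelope (at $\theta=0$, via the $e_4$-face argument, and at $\theta=\sqrt{\delta^2-a^2}$, a horizontal segment inside $\Od$) is also fine.

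The gap is the monotonicity claim on the envelope arcs, which you yourself flag as unproved; in fact it is false. On the arc $a_+=\rtde$, factoring your derivative as $f'(\theta)=e^\theta\bigl[(1-(a_--2\theta))e^{a_--2\theta}-(1-\rtde)e^{\rtde}\bigr]$ and invoking Proposition~\ref{pr1}, the sign of $f'$ is that of $\rtde-(a_--2\theta)$ when both are nonnegative. At $\theta=0$ one has $a_--2\theta=\sqrt{2a^2-\delta^2+\ve^2}\ge\rtde$, but at $\theta=\theta_c$ one has $a_--2\theta=\delta-2\theta_c$, which over a substantial range of $(a,\delta,\ve)$ drops strictly below $\rtde$; there $f'(\theta_c)>0$, contradicting nonincrease on $[0,\theta_c]$. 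Likewise on the arc $a_-=\delta$, stationarity forces $(1-\delta)e^\delta=(1-(a_++2\theta))e^{a_++2\theta}$, hence $a_++2\theta=\delta$ and $\theta=\delta-a$ with $f=0$ there; this $\theta$ can lie strictly inside $(\theta_c,\sqrt{\delta^2-a^2})$, so $f$ has an interior extremum and is not monotone. The proposition survives because a weaker statement suffices: every critical point of $f$ on each arc has $f\ge 0$, so the arc minimum is attained either at such a point or at an endpoint, and the non-corner endpoints already give $f\ge0$. On $a_-=\delta$ the critical value is $0$; on $a_+=\rtde$ stationarity lets one rewrite $f=2\bigl[(1-a)e^a-(1-(a_--\theta))e^{a_--\theta}\bigr]$, and then $f\ge0$ follows from Observation~\ref{obs1} when $\theta<a+\rtde$ and from $a_--\theta\ge a$ together with Proposition~\ref{pr1} when $\theta\ge a+\rtde$. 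Replacing your (false) monotonicity step by this critical-point analysis closes the argument and essentially reproduces the paper's proof.
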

\begin{proof}
Fix an $a\in[\rtde,\sqrt{\delta^2-\ve^2/2}).$ $S_{\delta,\ve,a}$ is the portion
of the hyperboloid $a_-^2+a_+^2=2\theta^2+2a^2$ sitting above this ``quadrilateral'' region
in the $(a_-,a_+)\!$-plane (the plane $\theta=0$). Fig.~\ref{p} shows this region, while Fig.~\ref{p1} gives the corresponding region in the original variables.
\begin{figure}[ht]
\begin{center}
\begin{picture}(100,100)
\thinlines
\put(-10,0){\vector(1,0){115}}
\put(0,-10){\vector(0,1){115}}
\put(15,-2){\line(0,1){4}}
\put(95,-2){\line(0,1){4}}
\put(-2,15){\line(1,0){4}}
\put(-2,95){\line(1,0){4}}
\thicklines
\qbezier(45,15)(45,15)(95,15)
\qbezier(95,15)(95,15)(95,95)
\qbezier(95,95)(95,95)(35,35)
\qbezier(35,35)(45,30)(45,15)
\put(0,-11){\footnotesize $\rtde$}
\put(-43,15){\footnotesize $\rtde$}
\put(90,-11){\footnotesize $\delta$}
\put(-15,90){\footnotesize $\delta$}
\put(105,-2){\footnotesize $a_-$}
\put(-5,106){\footnotesize $a_+$}
\put(65,9){\scriptsize $e_1$}
\put(97,50){\scriptsize $e_2$}
\put(50,65){\scriptsize $e_3$}
\put(30,20){\scriptsize $e_4$}
\end{picture}
\end{center}
\caption{The projection of $S_{\delta,\ve,a}$ onto the $(a_-,a_+)\!$-plane.}
\label{p}
\end{figure}
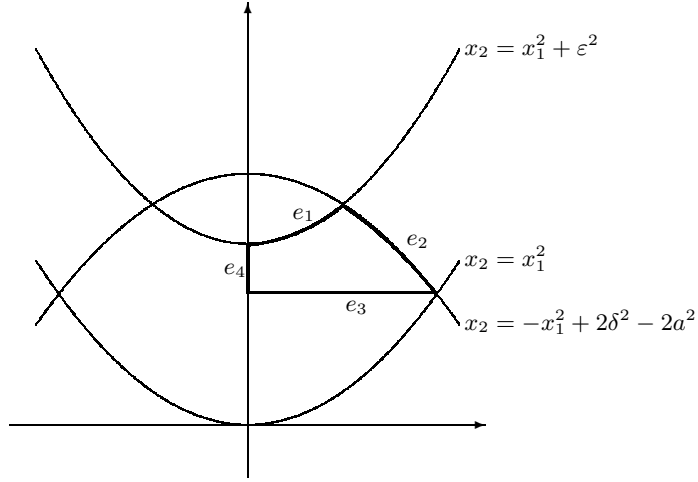
\begin{figure}[ht]
\begin{center}
\begin{picture}(200,200)
\thinlines \put(100,0){\vector(0,1){180}} \put(10,20){\vector(1,0){180}}
\qbezier(20,82)(100,-42)(180,82) \qbezier(20,162)(100,15)(180,162)
\qbezier(20,58)(100,172)(180,58) \thicklines \qbezier(100,70)(100,70)(171,70)
\qbezier(100,70)(100,70)(100,88) \qbezier(100,88)(120,90)(136,103)
\qbezier(136,103)(155,90)(171,70) \put(180,80){\footnotesize $x_2=x_1^2$}
\put(180,160){\footnotesize $x_2=x_1^2+\ve^2$}
\put(180,56){\footnotesize$x_2=-x_1^2+2\delta^2-2a^2$} \put(89,77){\footnotesize $e_4$}
\put(135,63){\footnotesize $e_3$} \put(158,88){\footnotesize $e_2$}
\put(115,98){\footnotesize $e_1$}
\end{picture}
\end{center}
\caption{The domain of variation of $x^+$ in $\Oe$ corresponding to $S_{\delta,\ve,a}.$}
\label{p1}
\end{figure}
The edges are as follows: $e_1\!\!:~a_+=\rtde,$ $e_2\!\!:~a_-=\delta,$ $e_3\!\!:~a_+=a_-,$
$e_4\!\!:~a_-^2+a_+^2=2a^2.$ The fact that the $(e_1,e_2)$ corner is in the picture is due to
the condition $a<\sqrt{\delta^2-\ve^2/2}.$ We include the degenerate cases $a=\rtde,$
$a=\sqrt{\delta^2-\ve^2/2}$ when edges $e_4$ and $e_1,$ respectively, shrink to a point, in
the general computation.

To minimize $f$ on $S_{\delta,\ve,a},$ we will utilize Lagrange multipliers in the interior
of the quadrilateral as well as on its nontrivial edges $e_1$ and $e_2.$

{\bf Interior.} We form the corresponding Lagrangian:
$$
L(a_-,a_+,\theta,\lambda)=2(1-a)e^a-(1-a_-)e^{a_--\theta}-(1-a_+)e^{a_++\theta}-\lambda(a_-^2+a_+^2-2\theta^2-2a^2).
$$
$\nabla L=0$ yields
$$
\begin{array}{rcl}
a_-e^{a_--\theta}&=&2\lambda a_-\\
a_+e^{a_++\theta}&=&2\lambda a_+\\
(1-a_-)e^{a_--\theta}-(1-a_+)e^{a_++\theta}&=&-4\theta\lambda\\
a_-^2+a_+^2&=&2\theta^2+2a^2
\end{array}
$$
The first two equations give $a_+=a_--2\theta.$ Plugging this into the last equation, we
obtain $(a_--\theta)^2=a^2,$ hence $a_-=a+\theta$ ($a_-=-a+\theta$ would imply
$a_+=-a-\theta<0,$ an impossibility). Calculating $f$ for this combination of variables, we
obtain
$$
f(a,a_-,a_+,\theta)=2(1-a)e^a-(1-a-\theta)e^a-(1-a+\theta)e^a=0.
$$

{\bf Edge $e_1.$} We have $a_+=\rtde,$ so $a_-^2+a_+^2=2\theta^2+2a^2$ becomes
$a_-^2+\delta^2-\ve^2=2\theta^2+2a^2.$ Again, we form the Lagrangian:
$$
l(a_-,\theta,\lambda)=2(1-a)e^a-(1-a_-)e^{a_--\theta}-(1-\rtde)e^{\rtde+\theta}-\lambda(a_-^2+\delta^2-\ve^2-2\theta^2-2a^2).
$$
$\nabla l=0$ yields
$$
\begin{array}{rcl}
a_-e^{a_--\theta}&=&2\lambda a_-\\
(1-a_-)e^{a_--\theta}-(1-\rtde)e^{\rtde+\theta}&=&-4\theta\lambda\\
a_-^2+\delta^2-\ve^2&=&2\theta^2+2a^2
\end{array}
$$
The first two equations give $(1-\rtde)e^{\rtde+\theta}=(1-a_-)e^{a_--\theta}+2\theta e^{a_--\theta}$ and so
$$
f(a,a_-,a_+,\theta)=2\left((1-a)e^a-(1-(a_--\theta))e^{a_--\theta}\right).
$$
Two separate cases need to be considered here. If $\theta<a+\rtde,$ then, by Observation~\ref{obs1},
the whole segment $[x_-,x_+],$ underlying our $a,\theta$ notation, lies inside $\Od.$ But
$B_\delta$ is locally concave inside $\Od,$ so
$2B_\delta(x^0)-B_\delta(x^-)-B_\delta(x^+)\ge0,$ which is equivalent to
$f(a,a_-,a_+,\theta)\ge0.$

If, on the other hand, $\theta\ge a+\rtde,$ then $\theta^2+a^2-2\theta a\ge\delta^2-\ve^2,$
hence
$$
a_-^2+\theta^2+a^2-2\theta a\ge a_-^2+\delta^2-\ve^2=2\theta^2+2a^2,
$$ so
$$
a_-^2\ge(\theta+a)^2\Longrightarrow a_--\theta\ge a.
$$
Using Proposition~\ref{pr1}, we obtain $f\ge0.$

{\bf Edge $e_2.$} We have $a_-=\delta,$ so $a_-^2+a_+^2=2\theta^2+2a^2$ becomes
$a_+^2+\delta^2=2\theta^2+2a^2.$ Once more, we form the Lagrangian:
$$
l(a_+,\theta,\lambda)=
2(1-a)e^a-(1-\delta)e^{\delta-\theta}-(1-a_+)e^{a_++\theta}-\lambda(a_+^2+\delta^2-2\theta^2-2a^2).
$$
$\nabla l=0$ gives
$$
\begin{array}{rcl}
a_+e^{a_++\theta}&=&2\lambda a_+\\
(1-\delta)e^{\delta-\theta}-(1-a_+)e^{a_++\theta}&=&-4\theta\lambda\\
a_+^2+\delta^2&=&2\theta^2+2a^2
\end{array}
$$
The first two equations give $(1-\delta)e^{\delta-\theta}=(1-a_+)e^{a_++\theta}-2\theta
e^{a_++\theta},$ so $(1-\delta)e^\delta=(1-(a_++2\theta))e^{a_++2\theta}$ and so
$a_++2\theta=\delta.$ The third equation then gives $a=\delta-\theta$ and we have
$$
f(a,a_-,a_+,\theta)=f(\delta-\theta,\delta,\delta-2\theta,\theta)=0.
$$

{\bf Edge $e_3.$} If $a_-=a_+,$ the underlying segment $[x^-,x^+]$ is horizontal and thus
lies entirely in $\Oe.$ In this case, $f\ge0.$

{\bf Edge $e_4.$} If $a_-^2+a_+^2=2a^2,$ then $\theta=0$ and we get a vertical segment, also
lying entirely in $\Oe.$

{\bf Vertices.} The only nontrivial vertex is $a_-=\delta,a_+=\rtde.$ If we make sure that
$f\ge0$ at this vertex, then we will have $f\ge0$ on $S_{\delta,\ve,a}.$ This completes the
proof of the proposition.
\end{proof}
In our search of a segment that would minimize $f$ on $S_{\delta,\ve,a},$ we have now planted
the endpoints $x^+$ and $x^-$ on the top and bottom boundary of $\Oe,$ correspondingly. To
finish the proof of Lemma~\ref{l3d}, we need to vary $x^0.$ Another geometric observation is in
order.
\begin{obs}
\label{obs2}
If $\theta\le(\delta+\rtde)/2,$ then
$f(\sqrt{\delta^2-\ve^2/2-\theta^2},\delta,\rtde,\theta)\ge0.$
\end{obs}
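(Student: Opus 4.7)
The plan is to reduce Observation~\ref{obs2} to the local concavity of $B^+_\delta$ on $\Od$ furnished by Lemma~\ref{l3c}, applied to a single explicit chord. With the special values $a_-=\delta$, $a_+=\rtde$, and $a=\rtdet$, together with the normalization $x_1^0=0$, the parameters encode the segment from $x^-=(-\theta,\theta^2)$ (lying on the lower boundary $x_2=x_1^2$) to $x^+=(\theta,\theta^2+\ve^2)$ (lying on the upper boundary $x_2=x_1^2+\ve^2$ of $\Oe$), whose midpoint is exactly $x^0=(0,\theta^2+\ve^2/2)$. A direct computation shows
$$
2B^+_\delta(x^0)-B^+_\delta(x^-)-B^+_\delta(x^+)=\frac{e^{-\delta}}{1-\delta}\,f\bigl(a,\delta,\rtde,\theta\bigr),
$$
so the inequality $f\ge 0$ is literally the midpoint concavity of $B^+_\delta$ along $[x^-,x^+]$.

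By Lemma~\ref{l3c}, it therefore suffices to verify that $[x^-,x^+]\subset\Od$ whenever $\theta\le(\delta+\rtde)/2$. For $\theta=0$ the segment is vertical and trivially in $\Od$. For $\theta>0$ the chord joining $x^\pm$ has slope $\ve^2/(2\theta)$, and it stays below the parabola $x_2=x_1^2+\delta^2$ on $[-\theta,\theta]$ iff the quadratic
$$
q(x_1)=x_1^2-\frac{\ve^2}{2\theta}\,x_1+\delta^2-\theta^2-\ve^2/2
$$
is nonnegative on that interval. A short case split based on whether the vertex $x_1^*=\ve^2/(4\theta)$ lies inside $[-\theta,\theta]$ reduces $q\ge 0$ either to $q(\theta)=\delta^2-\ve^2\ge 0$ (when $\theta\le\ve/2$, using $\delta>\ve$) or to $q(x_1^*)\ge 0$, which after clearing denominators is equivalent to $(4\theta^2+\ve^2)^2\le(4\theta\delta)^2$, i.e., $\theta^2-\theta\delta+\ve^2/4\le 0$. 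The roots of this last quadratic are precisely $(\delta\pm\rtde)/2$, so the hypothesis $\theta\le(\delta+\rtde)/2$ is exactly what is needed.

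The main (and essentially only) obstacle is this geometric verification, whose point is to identify $(\delta+\rtde)/2$ as the largest $\theta$ for which the chord $[x^-,x^+]$ is tangent to or below $x_2=x_1^2+\delta^2$ on the relevant segment; geometrically it corresponds to the tangent-line condition that appeared already in Observation~\ref{obs1}. Once this geometric fact is in hand, Lemma~\ref{l3c} closes the proof without invoking any further structural property of $B^+_\delta$.
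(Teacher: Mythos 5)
Your overall strategy is the same as the paper's. Via the identity
$$
2B^+_\delta(x^0)-B^+_\delta(x^-)-B^+_\delta(x^+)=\frac{e^{-\delta}}{1-\delta}\,f\bigl(a,\delta,\rtde,\theta\bigr),
$$
valid since $\delta<1$, the inequality $f\ge0$ is exactly midpoint concavity of $B^+_\delta$ along $[x^-,x^+]$, and Lemma~\ref{l3c} gives this once $[x^-,x^+]\subset\Od$. Where you diverge is in how that containment is certified. The paper simply recycles Observation~\ref{obs1}: its hypothesis $\theta\le a+\rtde$, with $a=\sqrt{\delta^2-\ve^2/2-\theta^2}$, is trivial when $\theta\le\rtde$, and for $\theta>\rtde$ a single squaring followed by completing the square shows it is equivalent to $\theta\le(\delta+\rtde)/2$. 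You instead re-derive the inclusion from scratch by parametrizing the chord and checking that the quadratic $q$ stays nonnegative on $[-\theta,\theta]$. This is a perfectly correct, self-contained elementary replacement for the tangent-line computation already banked in the proof of Observation~\ref{obs1} --- slightly longer, but no less rigorous, and it buys you independence from that earlier observation.

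One small gap to close in the vertex-inside case $\theta>\ve/2$: the condition $q(x_1^*)\ge0$ is equivalent to $\theta^2-\theta\delta+\ve^2/4\le0$, whose solution set is $\frac{\delta-\rtde}{2}\le\theta\le\frac{\delta+\rtde}{2}$, while the hypothesis supplies only the upper bound. The lower bound is in fact automatic, but you should say why: $\ve\le\delta$ gives $(\delta-\ve)^2\le\delta^2-\ve^2$, hence $\delta-\rtde\le\ve$, hence $\frac{\delta-\rtde}{2}\le\frac{\ve}{2}<\theta$. With that line inserted, the argument is complete.
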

\begin{proof}
We demonstrate this by rephrasing Observation 1. Namely, we investigate what the condition
$\theta\le a+\rtde$ means when $a_-=\delta$ and $a_+=\rtde.$

Since $a_-^2+a_+^2=2\theta^2+2a^2,$ we have $a=\sqrt{\delta^2-\ve^2/2-\theta^2}.$ Therefore,
the condition becomes
$$
\theta\le\sqrt{\delta^2-\ve^2/2-\theta^2}+\rtde.
$$
If $\theta\le\rtde,$ Observation~\ref{obs1} works and $f\ge0.$ If $\theta\ge\rtde,$ the above
inequality is equivalent to
$$
\theta^2-\theta\rtde\le\frac{\ve^2}4.
$$
We continue
$$
\left(\theta-\frac{\rtde}2\right)^2\le\frac{\delta^2}4,
$$
which gives (taking into account the fact that $\theta\ge0$)
$$
0\le\theta\le\frac{\delta+\rtde}2.\mbox{\qedhere}
$$
\end{proof}
We are now in a position to finalize the first stage of the extremal problem \eqref{t39},
\eqref{t40}. 
\begin{prop}
\label{pr5}
$$
m(\delta,\ve)=\min\{0,f(\rtde,\delta,\rtde,\ve/\sqrt2)\}.
$$
\end{prop}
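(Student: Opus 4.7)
The plan is to combine Propositions~\ref{pr3} and~\ref{pr4} to reduce the four-variable extremal problem defining $m(\delta,\ve)$ to a scalar one, then to control that scalar problem via Observation~\ref{obs2} together with a concavity argument on a residual interval. Propositions~\ref{pr3} and~\ref{pr4} together yield
$$
m(\delta,\ve)=\min\Big\{0,\,\min_{a\in[\rtde,\,\sqrt{\delta^2-\ve^2/2}]}f\bigl(a,\delta,\rtde,\sqrt{\delta^2-\ve^2/2-a^2}\bigr)\Big\},
$$
the only candidate for a negative contribution being the ``corner'' $a_-=\delta$, $a_+=\rtde$, $\theta^2+a^2=\delta^2-\ve^2/2$ isolated in Proposition~\ref{pr4}. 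Reparametrize via $\theta\in[0,\ve/\sqrt 2]$ using $a(\theta):=\sqrt{\delta^2-\ve^2/2-\theta^2}$ and set $g(\theta):=f(a(\theta),\delta,\rtde,\theta)$. Since $g(\ve/\sqrt 2)=f(\rtde,\delta,\rtde,\ve/\sqrt 2)$, Proposition~\ref{pr5} is equivalent to
$$
\min_{\theta\in[0,\ve/\sqrt 2]}g(\theta)=\min\{0,\,g(\ve/\sqrt 2)\}.
$$

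Next, split $[0,\ve/\sqrt 2]$ at the geometrically natural threshold $\theta_*:=(\delta+\rtde)/2$ appearing in Observation~\ref{obs2}. On $[0,\theta_*]\cap[0,\ve/\sqrt 2]$, Observation~\ref{obs2} already gives $g\ge 0$. If $\theta_*\ge \ve/\sqrt 2$, then $g\ge 0$ on the entire target interval, so $\min_\theta g\ge 0$ and $g(\ve/\sqrt 2)\ge 0$, whence both sides of the proposition equal $0$. Otherwise $\theta_*<\ve/\sqrt 2$, and the identities $\theta_*\pm a(\theta_*)\in\{\delta,\rtde\}$ (which follow from $a(\theta_*)=(\delta-\rtde)/2$) permit a direct computation
$$
g(\theta_*)=e^{a(\theta_*)}\bigl[(1+\rtde)-(1-\rtde)e^{2\rtde}\bigr]\ge 0,
$$
the bracket being nonnegative because $(1+r)e^{-r}\ge(1-r)e^{r}$ for $r\ge 0$, itself a direct consequence of Proposition~\ref{pr1}.

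The crux of the argument is then to prove that $g$ is concave on $[\theta_*,\ve/\sqrt 2]$. Granted this, a concave function on a closed interval attains its minimum at an endpoint, so
$$
\min_{[\theta_*,\ve/\sqrt 2]}g=\min\{g(\theta_*),g(\ve/\sqrt 2)\}=\min\{0,g(\ve/\sqrt 2)\},
$$
where the second equality uses $g(\theta_*)\ge 0$. Combining with the left subinterval (where $g\ge 0$ by Observation~\ref{obs2}) yields $\min_{[0,\ve/\sqrt 2]}g=\min\{0,g(\ve/\sqrt 2)\}$, and the proof is complete.

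The main obstacle is therefore the concavity claim on $[\theta_*,\ve/\sqrt 2]$. Direct differentiation, using $a'(\theta)=-\theta/a(\theta)$, gives
$$
g''(\theta)=\frac{2e^{a(\theta)}}{a(\theta)}\bigl(a(\theta)-\theta^2\bigr)-(1-\delta)e^{\delta-\theta}-(1-\rtde)e^{\rtde+\theta},
$$
so one must show that the two explicitly negative summands dominate the first (which is positive only when $a(\theta)>\theta^2$) throughout $[\theta_*,\ve/\sqrt 2]$. I would argue this using Proposition~\ref{pr1} together with the algebraic constraint $a(\theta)^2+\theta^2=\delta^2-\ve^2/2$ tying $a(\theta)$ to $\theta$; the estimate becomes tightest at the right endpoint $\theta=\ve/\sqrt 2$, $a(\theta)=\rtde$, where the underlying point $x^+$ sits at the top of $\Oe$ and the constraint is at its most restrictive.
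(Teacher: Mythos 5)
Your reduction via Propositions~\ref{pr3} and~\ref{pr4} is correct, the reparametrization to the scalar function $g(\theta)=f(a(\theta),\delta,\rtde,\theta)$ on $[0,\ve/\sqrt2]$ matches the paper's $V(\theta)$, and your endpoint computation $g(\theta_*)=e^{a(\theta_*)}\bigl[(1+\rtde)-(1-\rtde)e^{2\rtde}\bigr]\ge0$ at $\theta_*=(\delta+\rtde)/2$ is an accurate identity (though it is already subsumed by Observation~\ref{obs2}). The problem is that your entire argument then rests on the claim that $g$ is concave on $[\theta_*,\ve/\sqrt2]$, and you do not prove it. You derive the second-derivative formula correctly, but the concluding sentence --- ``I would argue this using Proposition~\ref{pr1} together with the algebraic constraint'' --- is a sketch, not an argument. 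The sign of $g''$ is a genuine competition between the positive term $\frac{2e^a}{a}(a-\theta^2)$, which near the boundary $\delta\downarrow\ve$ contributes nearly $2$, and the two negative exponential terms; the balance is delicate (it degenerates to $0$ in the limit $\delta\to\ve$), and there is no reason to expect Proposition~\ref{pr1} alone to settle it. This is a real gap: without the concavity you cannot conclude that the minimum over $[\theta_*,\ve/\sqrt2]$ is at an endpoint.

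The paper avoids concavity entirely and does something both simpler and more robust. It observes that at \emph{any} interior critical point $\theta_*$ with $V'(\theta_*)=0$, one can solve the stationarity equation for $(1-\rtde)e^{\rtde+\theta_*}$ and substitute back into $V(\theta_*)$, yielding
$$
V(\theta_*)=2e^{-\theta_*}\Bigl[\bigl(1-(\rtdet+\theta_*)\bigr)e^{\rtdet+\theta_*}-(1-\delta)e^{\delta}\Bigr].
$$
Then either $\theta_*\le(\delta+\rtde)/2$, in which case Observation~\ref{obs2} gives $V(\theta_*)\ge0$; or $\theta_*>(\delta+\rtde)/2$, in which case an elementary manipulation shows $\delta\ge\theta_*+\rtdet$, whence $V(\theta_*)\ge0$ by the monotonicity in Proposition~\ref{pr1}. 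Since every interior critical value is $\ge0$ and $V(0)\ge0$, the minimum of $\min\{0,V\}$ over the interval must equal $\min\{0,V(\ve/\sqrt2)\}$ --- no convexity information needed. You should adopt this first-order (stationarity) argument in place of the unproven concavity claim; the rest of your setup can stay as is.
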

\begin{proof}
By Propositions~\ref{pr3} and \ref{pr4} we have
$$
m(\delta,\ve)=\min_{\rtde\le a\le\delta}m_a(\delta,\ve)=
\min\{0,\min_{\rtde\le
a\le\sqrt{\delta^2-\ve^2/2}}f(a,\delta,\rtde,\sqrt{\delta^2-\ve^2/2-a^2})\}.
$$
Expressing, as has been our custom, everything in terms of
$\theta=\sqrt{\delta^2-\ve^2/2-a^2},$ we set out to minimize the function
$$
V(\theta)\df f(\sqrt{\delta^2-\ve^2/2-\theta^2},\delta,\rtde,\theta),\qquad
0\le\theta\le\frac{\ve}{\sqrt2}\,.
$$
The interval $[0,\ve/\sqrt2]$ for $\theta$ is determined from the condition
$a_-^2+a_+^2=2a^2+2\theta^2,a\ge\rtde.$ Geometrically, we are sliding $x^0$ upward, while
$x^+$ and $x^-$ slide along the top and bottom boundary curves of $\Oe.$ We have
$$
V(\theta)=2(1-\sqrt{\delta^2-\ve^2/2-\theta^2})e^{\sqrt{\delta^2-\ve^2/2-\theta^2}}-(1-\rtde)e^{\rtde+\theta}-(1-\delta)e^{\delta-\theta}.
$$
Assume that $V$ has a local extremum $\theta=\theta_*$ in the interval $(0,\ve/\sqrt2).$ Then
$V'(\theta_*)=0,$ i.e.
$$
2\theta_*e^{\rtdet}-(1-\rtde)e^{\rtde+\theta_*}+(1-\delta)e^{\delta-\theta_*}=0.
$$
We have
$$
(1-\rtde)e^{\rtde+\theta_*}=2\theta_*e^{\rtdet}+(1-\delta)e^{\delta-\theta_*}
$$
and so
$$
V(\theta_*)=2e^{-\theta_*}\left[(1-(\rtdet+\theta_*))e^{\rtdet+\theta_*}-(1-\delta)e^{\delta}\right].
$$
If $0\le\theta_*\le(\delta+\rtde)/2,$ then, by Observation 2, $V(\theta_*)\ge0.$ What happens
if $(\delta+\rtde)/2\le\theta_*\le\ve/\sqrt2?$ First of all, in order to ensure that this
question makes sense, we observe that the inequality $(\delta+\rtde)/2\le\ve/\sqrt2$ is
equivalent to the condition $\delta\le\frac3{2\sqrt2}\ve.$ If it does not hold, Proposition 2
implies that $V(\theta_*)\ge0.$ Assuming the inequality does hold, we have
$$
\frac{\delta+\rtde}2\le\theta_*,
$$
which, after rearrangement and squaring, becomes
$$
\delta^2-\ve^2\le4\theta_*^2+\delta^2-4\theta_*\delta,
$$
then
$$
\delta^2-\ve^2/2-\theta_*^2\le\theta_*^2+\delta^2-2\theta_*\delta
$$
and, finally (since $\theta_*\le\frac\ve{\sqrt2}<\delta$),
$$
\delta\ge\theta_*+\rtdet.
$$
It follows from Proposition~\ref{pr1} that $V(\theta_*)\ge0.$

This consideration means that
$$
\min_{0\le\theta}\{0,V(\theta)\}= \min\{0,V(0),V(\ve/\sqrt2)\},
$$
but $V(0)\ge0$ (by Observation~\ref{obs2}) and therefore
$$
m(\delta,\ve)=\min\{0,V(\ve/\sqrt2)\}=
\min\{0,f(\rtde,\delta,\rtde,\ve/\sqrt2)\}.\mbox{\qedhere}
$$
\end{proof}
We have completed the first stage of our extremal problem. We can now rephrase \eqref{t40}, as follows. Let
$$
g(\delta,\ve)=f(\rtde,\delta,\rtde,\ve/\sqrt2).
$$
Equivalently,
$$
g(\delta,\ve)=(1-\rtde)e^{\rtde}\left(2-e^{\ve/\sqrt2}\right)-(1-\delta)e^{\delta-\ve/\sqrt2}.
$$
Then
\eq[t43]{
\delta(\ve)=\min_{\ve<\delta<1}\{\delta:g(\delta,\ve)\ge 0\}.
}
\subsection{Stage 2}
The following simple result will complete the ``+'' part of the proof of Lemma~\ref{l3d}.

\begin{prop}
\label{pr6}
For any $\ve,$
$0<\ve<\sqrt2\log2,$ the equation $g(\delta,\ve)=0$ has a unique solution on
the interval $(\ve,1)$ and it is $\delta(\ve)$ from~{\rm\eqref{t43}}.
\end{prop}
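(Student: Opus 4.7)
The plan is to prove existence and uniqueness by establishing that $\delta \mapsto g(\delta, \ve)$ is strictly increasing on $(\ve, 1)$, with $g(\ve, \ve) < 0$ and $g(1, \ve) > 0$; the intermediate value theorem then delivers the unique root. Throughout, the natural auxiliary is $h(t) := (1-t)e^t$ (the function from Proposition~\ref{pr1}), which satisfies $h'(t) = -t e^t$.

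Setting $r = \sqrt{\delta^2 - \ve^2}$ and applying the chain rule gives
$$\frac{\partial g}{\partial \delta} = \delta e^{r}\!\left[e^{\delta - r - \ve/\sqrt 2} + e^{\ve/\sqrt 2} - 2\right].$$
For the bracket, AM-GM applied as $e^A + e^B \geq 2 e^{(A+B)/2}$ with $A = \delta - r - \ve/\sqrt 2$ and $B = \ve/\sqrt 2$ gives $e^A + e^B \geq 2 e^{(\delta - r)/2}$; since $\delta > r = \sqrt{\delta^2 - \ve^2}$ when $\ve > 0$, the bracket is strictly greater than $2$. Hence $\partial_\delta g > 0$ throughout $(\ve, 1)$, establishing strict monotonicity. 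At the right endpoint, the second term in $g$ vanishes, yielding $g(1, \ve) = (1 - \sqrt{1-\ve^2}) e^{\sqrt{1-\ve^2}}(2 - e^{\ve/\sqrt 2}) > 0$ because $2 - e^{\ve/\sqrt 2} > 0$ for $\ve < \sqrt 2 \log 2$.

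The remaining and most delicate ingredient is $g(\ve, \ve) < 0$. At $\delta = \ve$ we have $r = 0$, and the inequality becomes
$$\Psi(\ve) := (1-\ve)e^{\ve(1-1/\sqrt 2)} + e^{\ve/\sqrt 2} - 2 > 0 \quad \text{on } (0, \sqrt 2 \log 2).$$
Direct differentiation shows $\Psi(0) = \Psi'(0) = \Psi''(0) = 0$ with $\Psi'''(0) = 3\sqrt 2/2 - 2 > 0$. I plan to prove $\Psi''' > 0$ on the entire interval $[0, \sqrt 2 \log 2]$ and then obtain $\Psi > 0$ on $(0, \sqrt 2 \log 2]$ by three integrations using the vanishing initial data at $\ve = 0$. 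The positivity of $\Psi'''$ reduces to a comparison among three exponentials of rates $\sqrt 2$-type and $1/\sqrt 2$-type, and can be verified by checking the value at $\ve = 0$ and tracking its own derivative, which consistently favors the faster-growing term. This exponential-vs-exponential estimate is the main obstacle; by contrast, the monotonicity and the positivity at $\delta = 1$ follow immediately from AM-GM and substitution, and the triple vanishing at $\ve = 0$ is precisely what prevents any naive AM-GM-type bound from closing the gap in $\Psi > 0$ directly.
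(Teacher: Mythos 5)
Your proposal follows the same overall IVT skeleton as the paper (strict monotonicity of $\delta\mapsto g(\delta,\ve)$ plus the sign checks $g(\ve,\ve)<0$, $g(1,\ve)>0$), but the individual steps differ, and one of them is not actually carried out.

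The monotonicity step is a genuine improvement on the paper's argument. You rewrite $\partial_\delta g = \delta e^{\rtde}\bigl[e^{\delta-\rtde-\ve/\sqrt2}+e^{\ve/\sqrt2}-2\bigr]$ and dispose of the bracket with AM--GM and $\delta>\rtde$, giving strict positivity uniformly on $(\ve,1)$. The paper instead splits at $\delta=\tfrac{3}{2\sqrt2}\ve$: on $(\ve,\tfrac{3}{2\sqrt2}\ve)$ it bounds $\partial_\delta g$ below by $\delta e^{\rtde}(e^{\ve/\sqrt2}-1)>0$ using $\delta-\ve/\sqrt2>\rtde$, and for $\delta\ge\tfrac{3}{2\sqrt2}\ve$ it falls back on the already-proved concavity statement (Proposition~\ref{pr2} and \eqref{t341}) to assert $g(\delta,\ve)\geq0$ there. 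Your one-line AM--GM argument avoids both the case split and the appeal to earlier structural results; it is cleaner and self-contained. The check $g(1,\ve)>0$ is identical in both.

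However, the hard step --- $g(\ve,\ve)<0$, equivalently $\Psi(\ve)>0$ --- is where you stop short. You correctly identify the triple degeneracy $\Psi(0)=\Psi'(0)=\Psi''(0)=0$ with $\Psi'''(0)=\tfrac{3\sqrt2}{2}-2>0$ (I checked this: with $c_1=1-1/\sqrt2$, $c_2=1/\sqrt2$, one gets $\Psi'''(0)=c_1^3-3c_1^2+c_2^3=3/\sqrt2-2$), and your reduction ``prove $\Psi'''>0$ on $[0,\sqrt2\log2]$, then integrate three times'' is a valid route. But the claim $\Psi'''>0$ is only asserted, with a vague description (``tracking its own derivative, which consistently favors the faster-growing term'') that is not a proof; indeed, it is not obvious a priori that $\Psi'''$ stays positive, since the bracket $(1-\ve)c_1^3-3c_1^2$ is negative and $e^{c_1\ve}$ grows. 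This is a genuine gap. It can be closed: writing $\Psi'''(\ve)=e^{c_1\ve}\bigl(L(\ve)-R(\ve)\bigr)$ with $L(\ve)=c_2^3 e^{(c_2-c_1)\ve}$ strictly convex and $R(\ve)=c_1^2(3-c_1)+c_1^3\ve$ affine, one verifies $L(0)>R(0)$ and $L'(0)>R'(0)$, and convexity of $L-R$ then gives $L>R$ for all $\ve\ge0$. Alternatively, the paper's own approach --- factoring $g(\ve,\ve)=e^{-\ve/\sqrt2}\sum_{k\ge3}\frac{\ve^k}{k!}\bigl[2^{1-k/2}-1-2^{k/2}+k\bigr]$ and observing every bracket with $k\geq3$ is negative (the $k=3$ term being $(2\sqrt2-3)/\sqrt2<0$, and for $k\geq4$ one has $2^{1-k/2}<1$ and $k\le 2^{k/2}$) --- gives $g(\ve,\ve)<0$ for all $\ve>0$ in one stroke, with no Taylor remainder estimate needed. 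Either closing argument would complete your proof; as written, the most delicate inequality is left unestablished.
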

\begin{proof}
Differentiating $g$ with respect to $\delta,$ we obtain
$$
\frac{\partial g}{\partial \delta}(\delta,\ve)=\delta\left[e^{\delta-\ve/\sqrt2}-e^{\rtde}\left(2-e^{\ve/\sqrt2}\right)\right].
$$
If $\ve<\delta<\frac3{2\sqrt2}\ve,$ then $\delta-\ve/\sqrt2>\rtde,$ we have
$$
\frac{\partial g}{\partial \delta}(\delta,\ve)\ge\delta e^{\rtde}\left(-1+e^{\ve/\sqrt2}\right)>0.
$$
If $\ve<\frac{2\sqrt2}3$ and $\frac3{2\sqrt2}\ve\le\delta<1,$ we know that $g(\delta,\ve)>0.$
Hence, if the equation $g(\delta,\ve)=0$ has a root on the interval $(\ve,1),$ the root is
unique. Therefore, to prove the proposition, it suffices to show that the equation
$g(\delta,\ve)=0$ has a solution on the interval $(\ve,1).$ To do this, we check that
$g(\ve,\ve)<0$ and $g(1,\ve)>0.$

At the left endpoint,
\begin{align*}
g(\ve,\ve)&=2-e^{\ve/\sqrt2}-(1-\ve)e^{\ve-\ve/\sqrt2}=
e^{-\ve/\sqrt2}\left(2e^{\ve/\sqrt2}-e^{\sqrt2\ve}-(1-\ve)e^{\ve}\right)\\
&=e^{-\ve/\sqrt2}\sum_{k=0}^{\infty}\frac{\ve^k}{k!}\left[\frac2{2^{k/2}}-1-2^{k/2}+k\right]
=e^{-\ve/\sqrt2}\sum_{k=3}^{\infty}\frac{\ve^k}{k!}\left[\frac2{2^{k/2}}-1-2^{k/2}+k\right].
\end{align*}
If $k=3,$ we get
$\frac2{2^{k/2}}-1-2^{k/2}+k=\frac1{\sqrt2}-2\sqrt2+2=\frac{2\sqrt2-3}{\sqrt2}<0.$ If
$k\ge4,$ we have $2^{1-k/2}<1$ and $k\le2^{k/2},$ so $\frac2{2^{k/2}}-1-2^{k/2}+k<0.$ We
conclude that $g(\ve,\ve)<0.$

At the other endpoint we have
$$
g(1,\ve)=\left(1-\sqrt{1-\ve^2}\right)e^{\sqrt{1-\ve^2}}\left(2-e^{\ve/\sqrt2}\right).
$$
Since $\ve<\sqrt2\log2,$ we conclude that $g(1,\ve)>0.$  This completes the proof of the
proposition and the $B^+$ part of Lemma~\ref{l3d}.
\end{proof}
{\it Proof of \eqref{t35-}.} We briefly outline what changes are necessary in the preceding
to prove the second half of Lemma~\ref{l3d}. We will designate the analogs of the propositions and
observations with a ``$-$'' sign. Proposition~\ref{pr2} implies that if
$\ve\le\frac{2\sqrt2}3\delta,$ then
$$
B^-_\delta\Bigl(\frac12x^-+\frac12x^+\Bigr)\le\frac12B^-_\delta(x^-)+\frac12B^-_\delta(x^+),
$$
for all $x^-,x^+\in\Oe$ such that $\frac12x^-+\frac12x^+\in\Oe.$

The argument allowing us to consider only those line segments with $x^-_1+x^+_1=0$ still
works. However, there is an important difference in the case of $B^-:$ we now consider those
segments slanted downward, i.e. those whose right endpoint $x^+$ is lower that the left
endpoint $x^-$ (recall that previously we considered only those slanted upward). Next, we
formulate the two-stage extremal problem for $B^-.$ As in \eqref{t352}, let
$$
a=\sqrt{\delta^2-x^0_2},\qquad
a_\pm=\sqrt{\delta^2+\left(x^\pm_1\right)^2-x^\pm_2},\qquad
\theta=x^+_1.
$$
Also, let
$$
f^-_\delta(a,a_-,a_+,\theta)\df 2(1+a)e^{-a}-(1+a_-)e^{-\theta-a_-}-(1+a_+)e^{\theta-a_+}.
$$
Observe that $f^-_\delta(a,a_-,a_+,\theta)=f_\delta(-a,-a_-,-a_+,\theta),$ where $f_\delta$
is defined by \eqref{t353}. We will mimic the formulation \eqref{t39}, \eqref{t40}, but
designate key ingredients with a ``$-$'' to avoid confusion and facilitate cross-reference.

For $0<\ve<\delta,$ let
$$
S^-_{\delta,\ve}=\ds\left\{(x,y,z,w)\in[\rtde,\delta]^3\times[0,\infty);~z\ge
y;~y^2+z^2=2x^2+2w^2\right\}.
$$
Then \eq[t39-]{ m^-(\delta,\ve)=\max\left\{f^-(a,a_-,a_+,\theta)\colon (a,a_-,a_+,\theta)\in
S^-_{\delta,\ve}\right\}, } \eq[t40-]{ \delta^-(\ve)=\min\{\delta\colon m^-(\delta,\ve)\le
0\}. } As before, we will need the following notation
$$
S^-_{\delta,\ve,a}=S^-_{\delta,\ve}\cap\{x=a\};\qquad m^-_a(\delta,\ve)=\max
f^-|_{S^-_{\delta,\ve,a}}.
$$
\subsection{Stage 1$^-$}
Again, we fix $a$ and collect several geometric facts. The first one is identical in meaning
and proof to Proposition~\ref{pr3}, stating that if the midpoint $x^0$ is low enough, then the whole
segment $[x^-,x^+]$ is inside $\Oe.$
\setcounter{propm}{2}
\begin{propm}
\label{pr3-}
If $a\in [\sqrt{\delta^2-\ve^2/2},\delta],$ then $m^-_a(\delta,\ve)=0.$
\end{propm}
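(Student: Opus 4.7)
The plan is to mirror the proof of Proposition~\ref{pr3} almost verbatim, with two bookkeeping changes: replace the local concavity of $B^+_\delta$ by the local convexity of $B^-_\delta$ supplied by the ``$-$'' half of Lemma~\ref{l3c}, and remember that $m^-$ is a maximum we wish to bound above by $0$, not a minimum to be bounded below by $0$.

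First I would translate the hypothesis back into geometric variables. After the standing reduction $x^0_1=0,$ the condition $a\ge\sqrt{\delta^2-\ve^2/2}$ becomes $x^0_2=\delta^2-a^2\le\ve^2/2,$ hence $x_2^-+x_2^+=2x_2^0\le\ve^2.$ Because both $x^\pm$ lie in $\Oe,$ we have $x_2^\pm\ge(x_1^\pm)^2\ge0,$ so each $x_2^\pm\le\ve^2.$ For every $t\in[0,1]$ the point $x(t)=(1-t)x^-+tx^+$ then satisfies $x_2(t)\le\max\{x_2^-,x_2^+\}\le\ve^2\le x_1(t)^2+\ve^2,$ while $x_2(t)\ge x_1(t)^2$ follows from Jensen's inequality applied to the linear $x_1(t).$ Therefore the whole segment $[x^-,x^+]$ sits inside $\Oe\subset\Omega_\delta.$

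Applying the local convexity of $B^-_\delta$ on $\Omega_\delta$ from Lemma~\ref{l3c} now gives
$$
B^-_\delta(x^0)\le\tfrac12 B^-_\delta(x^-)+\tfrac12 B^-_\delta(x^+),
$$
which, upon rewriting via the substitution~\eqref{t352} and a direct evaluation of $B^-_\delta$ at $x^0=(0,\delta^2-a^2),$ $x^\pm=(\pm\theta,x_2^\pm),$ is precisely $f^-(a,a_-,a_+,\theta)\le0$ throughout $S^-_{\delta,\ve,a}.$ Equality is attained at the degenerate configuration $a_-=a_+=a,\ \theta=0$ (corresponding to $x^-=x^+=x^0$), where direct substitution in the definition of $f^-$ gives $f^-=0.$ Hence $m^-_a(\delta,\ve)=0,$ as claimed.

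I do not expect any genuine obstacle in this proposition; the argument is purely geometric and all real difficulty is deferred to the ``$-$'' analog of Proposition~\ref{pr4}, where the midpoint $x^0$ is too high for the whole segment to remain in $\Oe$ and one must run a full Lagrange-multiplier analysis on the quadrilateral $S^-_{\delta,\ve,a}$ — in particular paying attention to the reversed slant of the admissible segments (now $a_+\ge a_-$, i.e., $x^+_2\le x^-_2$) that was flagged in the introductory remarks to this subsection.
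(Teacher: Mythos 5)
Your proof is correct and is exactly the argument the paper intends: the paper omits the proof of Proposition~3$^-$ entirely, remarking only that it is ``identical in meaning and proof to Proposition~\ref{pr3},'' and your write-up is precisely that adaptation (concavity of $B^+_\delta$ replaced by convexity of $B^-_\delta$, $\min$ replaced by $\max$, equality still at $a_-=a_+=a$, $\theta=0$).
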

We now state the following analog of the key Proposition\ref{pr4}.
\begin{propm}
\label{pr4-}
If $a\in[\rtde,\sqrt{\delta^2-\ve^2/2}),$ then
$$
m^-_a(\delta,\ve)=\max\{0,f^-(a,\rtde,\delta,\sqrt{\delta^2-\ve^2/2-a^2})\}.
$$
\end{propm}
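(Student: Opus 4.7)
The plan is to mirror the proof of Proposition~\ref{pr4} essentially verbatim, with the key observation that the roles of $a_-$ and $a_+$ get swapped because the slanted segment assumption has been reversed ($a_+\ge a_-$, i.e.\ segments slanted downward, replaces $a_+\le a_-$). Proposition~\ref{pr1} gives that $t\mapsto(1+t)e^{-t}$ is decreasing for $t>0$, which is exactly the companion tool needed for $B^-$ that $t\mapsto(1-t)e^t$ provided for $B^+$. First I would fix $a\in[\rtde,\sqrt{\delta^2-\ve^2/2})$ and consider the quadrilateral in the $(a_-,a_+)$-plane now lying \emph{above} the diagonal, with edges
$$
e_1^-\colon a_-=\rtde,\quad e_2^-\colon a_+=\delta,\quad e_3^-\colon a_-=a_+,\quad e_4^-\colon a_-^2+a_+^2=2a^2,
$$
intersected with the hyperboloid $a_-^2+a_+^2=2a^2+2\theta^2$. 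The only nontrivial vertex is $(a_-,a_+)=(\rtde,\delta)$, at which the value to be produced is $f^-(a,\rtde,\delta,\sqrt{\delta^2-\ve^2/2-a^2})$.

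Next I would argue that on the degenerate edges $e_3^-$ and $e_4^-$ the underlying segment $[x^-,x^+]$ is horizontal or vertical and thus lies entirely in $\Oe$, so Lemma~\ref{l3c} (local convexity of $B^-_\delta$) gives $f^-\le0$. For the interior critical points I would form the Lagrangian for $f^-$ subject to the hyperboloid constraint; the analog of the computation in the proof of Proposition~\ref{pr4} should force $a_+=a_--2\theta$ replaced by $a_+=a_-+2\theta$ and $a_-=-a+\theta$ (the sign choice that keeps $a_\pm\ge0$), and then direct substitution into $f^-$ should give $0$ exactly as in the $B^+$ case.

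The main work, and the place I expect real care to be needed, is the two nontrivial edges $e_1^-$ and $e_2^-$. On each, the Lagrange conditions reduce $f^-$ to a two-term expression of the form
$$
f^-=2\left[(1+(a_++\theta))e^{-(a_++\theta)}-(1+a)e^{-a}\right]
$$
(for $e_1^-$, with the symmetric formula on $e_2^-$). Here I would split into two subcases using a downward-slanted analog of Observation~\ref{obs1}: if $\theta<a+\rtde$, the whole segment lies in $\Od$, so Lemma~\ref{l3c} applied to $B^-_\delta$ gives $f^-\le0$; if $\theta\ge a+\rtde$, the same algebra as in Proposition~\ref{pr4} yields $a_++\theta\ge a$, at which point Proposition~\ref{pr1} (monotonicity of $(1+t)e^{-t}$) delivers $f^-\le0$. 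The same dichotomy handles $e_2^-$ after swapping roles.

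Finally, taking the maximum over all five pieces of the boundary and the interior leaves $\max\{0,f^-(a,\rtde,\delta,\sqrt{\delta^2-\ve^2/2-a^2})\}$ as the only possibility for $m^-_a(\delta,\ve)$, proving the proposition. The main obstacle is keeping the inequality directions straight: every ``$\ge0$'' in the proof of Proposition~\ref{pr4} must become ``$\le0$,'' every ``min'' a ``max,'' and every use of $t\mapsto(1-t)e^t$ must be replaced by $t\mapsto(1+t)e^{-t}$, while the underlying geometry in $\Oe,\Od$ (which is unchanged) continues to supply the $\Od$-convexity bound via Lemma~\ref{l3c}.
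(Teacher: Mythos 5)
Your overall outline---reflecting the quadrilateral across $a_-=a_+$, reusing the same edge structure, and flipping inequalities throughout---is the right scaffolding, and it is what the paper does. But the key step, the treatment of edge $e_1^-$ (where $a_-=\rtde$), contains a genuine error: the case split you propose does not carry over from Proposition~\ref{pr4}, and the paper in fact uses a different one.

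First, a computational slip that matters: on $e_1^-$ the Lagrange relation gives
$(1+\rtde)e^{-\rtde-\theta}=(1+a_+)e^{-a_++\theta}-2\theta e^{-a_++\theta}$, and substituting back yields
$f^-=2\bigl[(1+a)e^{-a}-(1+(a_+-\theta))e^{-(a_+-\theta)}\bigr]$,
not $2\bigl[(1+(a_++\theta))e^{-(a_++\theta)}-(1+a)e^{-a}\bigr]$ as you wrote (both the sign in the exponent and the overall sign are off). With the correct expression, to get $f^-\le0$ via Proposition~\ref{pr1} one would need $a\ge a_+-\theta$. Now run your proposed ``$\theta\ge a+\rtde$'' branch: the algebra from Proposition~\ref{pr4} (with $a_+$ replacing $a_-$ because the fixed edge is now $a_-=\rtde$) gives $a_+^2\ge(\theta+a)^2$, hence $a_+-\theta\ge a$---the \emph{opposite} of what is needed. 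Combined with Proposition~\ref{pr1} this would produce $f^-\ge0$, not $\le0$, so the branch as you describe it fails. (Your statement ``$a_++\theta\ge a$'' is a further sign slip; even that true-but-useless inequality does not give the conclusion.)

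What the paper does instead on $e_1^-$ is split on $a_+\ge2\theta$ versus $a_+<2\theta$, which is the natural dichotomy coming from the Lagrange equation $(1+\rtde)e^{-\rtde}=(1+(a_+-2\theta))e^{-(a_+-2\theta)}$. If $a_+\ge2\theta$, injectivity of $t\mapsto(1+t)e^{-t}$ on $[0,\infty)$ forces $a_+-2\theta=\rtde$, the constraint then gives $\theta=a-\rtde$, $a_+=2a-\rtde$, and a direct evaluation shows $f^-=0$. If $a_+<2\theta$, the observation that the negative root of $(1+t)e^{-t}=c$ ($0<c<1$) has smaller absolute value than the positive one gives $2\theta-a_+<\rtde$; plugging this into the constraint yields $a>\theta-\rtde$, which is exactly the hypothesis $\theta<a+\rtde$ of Observation~\ref{obs1}, and then $B^-_\delta$-convexity on $\Od$ gives $f^-\le0$. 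Note that edge $e_2^-$ ($a_+=\delta$) needs no dichotomy at all: there $(1+\delta)e^{-\delta}=(1+(a_-+2\theta))e^{-(a_-+2\theta)}$ with both arguments nonnegative forces $a_-+2\theta=\delta$ directly. So ``the same dichotomy handles $e_2^-$'' is also not quite right, although it causes no harm there.

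A smaller remark: in the interior case the correct root is $a_-=a-\theta$ (giving $a_+=a+\theta$), not $a_-=-a+\theta$; the conclusion $f^-=0$ is unaffected, but it is another sign that the $\pm$ bookkeeping needs to be redone carefully rather than transcribed by pattern-matching from Proposition~\ref{pr4}.
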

\begin{proof}
As before, fix an $a\in[\rtde,\sqrt{\delta^2-\ve^2/2}).$ We have a picture for
$S^-_{\delta,\ve,a},$ Fig.~\ref{p-}, which is a reflection of the corresponding picture for
$S_{\delta,\ve,a}$ on Fig. \ref{p} in the line $a_+=a_-.$
\begin{figure}
\begin{center}
\begin{picture}(100,100)
\thinlines
\put(-10,0){\vector(1,0){115}}
\put(0,-10){\vector(0,1){115}}
\put(15,-2){\line(0,1){4}}
\put(95,-2){\line(0,1){4}}
\put(-2,15){\line(1,0){4}}
\put(-2,95){\line(1,0){4}}
\thicklines
\qbezier(15,45)(15,45)(15,95)
\qbezier(15,95)(15,95)(95,95)
\qbezier(95,95)(95,95)(35,35)
\qbezier(35,35)(30,45)(15,45)
\put(0,-11){\footnotesize $\rtde$}
\put(-43,15){\footnotesize $\rtde$}
\put(90,-11){\footnotesize $\delta$}
\put(-15,90){\footnotesize $\delta$}
\put(105,-2){\footnotesize $a_-$}
\put(-5,106){\footnotesize $a_+$}
\put(5,65){\scriptsize $e_1$}
\put(50,99){\scriptsize $e_2$}
\put(64,55){\scriptsize $e_3$}
\put(19,35){\scriptsize $e_4$}
\end{picture}
\end{center}
\caption{The projection of $S^-_{\delta,\ve,a}$ onto the $(a_-,a_+)\!$-plane.} 
\label{p-}
\end{figure}
The edges are: $e_1\!\!:~a_-=\rtde,$ $e_2\!\!:~a_+=\delta,$ $e_3\!\!:~a_+=a_-,$
$e_4\!\!:~a_-^2+a_+^2=2a^2.$ Again, we make ample use of Lagrange multipliers.

{\bf Interior.} We form the corresponding Lagrangian
$$
L(a_-,a_+,\theta,\lambda)=2(1+a)e^{-a}-(1+a_-)e^{-a_--\theta}-(1+a_+)e^{-a_++\theta}-\lambda(a_-^2+a_+^2-2\theta^2-2a^2).
$$
$\nabla L=0$ yields
$$
\begin{array}{rcl}
a_-e^{-a_--\theta}&=&2\lambda a_-\\
a_+e^{-a_++\theta}&=&2\lambda a_+\\
(1+a_-)e^{-a_--\theta}-(1+a_+)e^{-a_++\theta}&=&-4\theta\lambda\\
a_-^2+a_+^2&=&2\theta^2+2a^2
\end{array}
$$
The first two equations give $a_+=a_-+2\theta.$ Plugging this into the last equation, we
obtain $(a_-+\theta)^2=a^2;$ the only acceptable solution is $a_-=a-\theta.$ Calculating
$f^-$ for this combination of variables, we obtain $f^-=0.$

{\bf Edge $e_1.$} We have $a_-=\rtde,$ so $a_-^2+a_+^2=2\theta^2+2a^2$ becomes
$a_+^2+\delta^2-\ve^2=2\theta^2+2a^2.$ Again, we form the Lagrangian:
$$
l(a_+,\theta,\lambda)=2(1+a)e^{-a}-(1+a_+)e^{-a_++\theta}-(1+\rtde)e^{-\rtde-\theta}-\lambda(a_+^2+\delta^2-\ve^2-2\theta^2-2a^2).
$$
$\nabla l=0$ yields
$$
\begin{array}{rcl}
a_+e^{-a_++\theta}&=&2\lambda a_+\\
-(1+a_+)e^{-a_++\theta}+(1+\rtde)e^{-\rtde-\theta}&=&-4\theta\lambda\\
a_+^2+\delta^2-\ve^2&=&2\theta^2+2a^2
\end{array}
$$
The first two equations give $(1+\rtde)e^{-\rtde-\theta}=(1+a_+)e^{-a_++\theta}-2\theta e^{-a_++\theta}$ and so
$$
(1+\rtde)e^{-\rtde}=(1+(a_+-2\theta))e^{-(a_+-2\theta)}.
$$
If $a_+\ge2\theta,$ then the we have $a_+-2\theta=\rtde.$ Plugging this into the third
equation, we get $a_+=2a-\rtde,$ $\theta=a-\rtde.$ With these values,
$$
f^-(a,a_-,a_+,\theta)=f^-(a,\rtde,2a-\rtde,a-\rtde)=0.
$$
If $a_+<2\theta,$ $2\theta-a_+<\rtde$ (the negative solution of the equation $(1+t)e^{-t}=c,$
$0<c<1,$ is always smaller in absolute value than the positive one). So
$a_+^2>4\theta^2+\delta^2-\ve^2-4\theta\rtde$ and
$2\theta^2+2a^2=a_+^2+\delta^2-\ve^2>4\theta^2+2(\delta^2-\ve^2)-4\theta\rtde.$ This gives
$a>\theta-\rtde.$ By Observation~\ref{obs1}, $f^-\le0.$

{\bf Edge $e_2.$} We have $a_+=\delta,$ so $a_-^2+a_+^2=2\theta^2+2a^2$ becomes
$a_-^2+\delta^2=2\theta^2+2a^2.$ Once more, we form the Lagrangian:
$$
l(a_-,\theta,\lambda)=
2(1+a)e^{-a}-(1+\delta)e^{-\delta+\theta}-(1+a_-)e^{-a_--\theta}-\lambda(a_-^2+\delta^2-2\theta^2-2a^2).
$$
$\nabla l=0$ gives
$$
\begin{array}{rcl}
a_-e^{-a_--\theta}&=&2\lambda a_-\\
-(1+\delta)e^{-\delta+\theta}+(1+a_-)e^{-a_--\theta}&=&-4\theta\lambda\\
a_-^2+\delta^2&=&2\theta^2+2a^2
\end{array}
$$
The first two equations give $(1+\delta)e^{-\delta+\theta}=(1+a_-)e^{-a_--\theta}+2\theta
e^{-a_--\theta},$ so
$$
(1+\delta)e^{-\delta}=(1+(a_-+2\theta))e^{-(a_-+2\theta)},
$$
which gives $a_-+2\theta=\delta.$ Plugging this into the third equation, we obtain
$a_-=2a-\delta,$ $\theta=\delta-a.$ With these values,
$$
f^-(a,a_-,a_+,\theta)=f^-(a,2a-\delta,\delta,\delta-a)=0.
$$
As before, edges $e_3$ and $e_4$ are trivial and the only nontrivial vertex is
$(a_-,a_+)=(\rtde,\delta).$ This consideration completes the proof of
Proposition~$4^-.$
\end{proof}

We have the appropriate analog of Observation~\ref{obs2} in terms of the function $f^-.$ 
\setcounter{obsm}{1}
\begin{obsm}
\label{obs2-}
If $\theta\le(\delta+\rtde)/2,$ then
$f^-(\sqrt{\delta^2-\ve^2/2-\theta^2},\rtde,\delta,\theta)\le0.$
\end{obsm}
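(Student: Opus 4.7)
My approach will parallel the proof of Observation~\ref{obs2} verbatim, with the single change that local \emph{convexity} of $B^-_\delta$ in $\Od$ (rather than local concavity of $B^+_\delta$) produces the reversed inequality $f^- \le 0$.

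The constraint $a_-^2 + a_+^2 = 2\theta^2 + 2a^2$ with $a_- = \rtde$, $a_+ = \delta$ forces $a = \rtdet$, and in this configuration $x^- = (-\theta, \theta^2 + \ve^2)$ sits on the upper boundary of $\Oe$ while $x^+ = (\theta, \theta^2)$ sits on the lower boundary, so that the segment $[x^-, x^+]$ slants downward through $x^0 = (0, \delta^2 - a^2)$. By the same squaring manipulation used in the proof of Observation~\ref{obs2}, the hypothesis $\theta \le (\delta + \rtde)/2$ is equivalent to $\theta \le a + \rtde$: when $\theta \le \rtde$ it is automatic, and when $\theta \ge \rtde$ one squares $\theta - \rtde \le \rtdet$ and rearranges to $(\theta - \rtde/2)^2 \le \delta^2/4$.

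I then invoke Observation~\ref{obs1} --- exactly as it was invoked on edge $e_1$ in the proof of Proposition~\ref{pr4-} --- to conclude that $\theta \le a + \rtde$ forces the segment $[x^-, x^+]$ to lie entirely inside $\Od$. The condition in Observation~\ref{obs1} depends only on $|x^+_1|$ and so applies to downward-slanting segments as well; as a backup, one can verify this directly by parametrizing $x(t) = (1-t)x^- + tx^+$ and computing $\max_t [x_2(t) - x_1^2(t)] = \theta^2 + \ve^2/2 + \ve^4/(16\theta^2)$, which is bounded by $\delta^2$ precisely when $\theta \in [(\delta-\rtde)/2, (\delta+\rtde)/2]$ (the lower endpoint is automatic because $\ve < \delta$ implies $(\delta - \rtde)/2 < \ve/2$, and $\theta \le \ve/2$ is covered by a separate trivial case). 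With $[x^-,x^+] \subset \Od$, the ``$-$'' half of Lemma~\ref{l3c} gives $2 B^-_\delta(x^0) \le B^-_\delta(x^-) + B^-_\delta(x^+)$, which after clearing the positive factor $(1+\delta)e^{-\delta}$ is exactly $f^-_\delta(a,\rtde,\delta,\theta) \le 0$. The only non-routine point is confirming that the geometric content of Observation~\ref{obs1} transfers to the downward-slanting configuration; everything else is algebraic rearrangement lifted directly from the proof of Observation~\ref{obs2}.
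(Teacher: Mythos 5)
Your proof is correct and is precisely the analogous argument that the paper indicates but does not write out (the paper merely states that Observation~2$^-$ is the "appropriate analog of Observation~2"). You correctly translate $\theta\le(\delta+\rtde)/2$ into $\theta\le a+\rtde$ via the same squaring manipulation, identify that this places the downward-slanting segment $[x^-,x^+]$ inside $\Od$, and then apply the local convexity of $B^-_\delta$ from Lemma~\ref{l3c} to get $f^-\le0$. The extra care you take in confirming that the geometry of Observation~\ref{obs1} transfers to the downward-slanting configuration — both by the symmetry $x_1\mapsto -x_1$ and by the backup direct computation of $\max_t\bigl(x_2(t)-x_1^2(t)\bigr)=\theta^2+\ve^2/2+\ve^4/(16\theta^2)\le\delta^2$ iff $\theta\le(\delta+\rtde)/2$ — is correct and is a useful explicit check that the paper leaves implicit; note also that the paper itself already invokes Observation~\ref{obs1} for $f^-\le0$ on edge $e_1$ in the proof of Proposition~$4^-$, so reusing it here is consistent.
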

To complete this stage of our program, we need
\begin{propm}
\label{pr5-}
$$
m^-(\delta,\ve)=\max\{0,f^-(\rtde,\rtde,\delta,\ve/\sqrt2)\}.
$$
\end{propm}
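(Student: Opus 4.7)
The plan is to mirror the proof of Proposition~\ref{pr5}, replacing minima by maxima and the inequality direction throughout. By Proposition~$3^-$ and Proposition~$4^-,$ the maximum over $S^-_{\delta,\ve}$ reduces to
$$
m^-(\delta,\ve)=\max\Bigl\{0,\;\max_{\rtde\le a\le\sqrt{\delta^2-\ve^2/2}} f^-(a,\rtde,\delta,\sqrt{\delta^2-\ve^2/2-a^2})\Bigr\},
$$
since for $a\ge\sqrt{\delta^2-\ve^2/2}$ the whole segment sits inside $\Oe$ (forcing $m^-_a=0$), while for smaller $a$ the only nontrivial candidate comes from the corner $a_-=\rtde,\,a_+=\delta.$ So the entire problem collapses to analyzing a one-variable function on a compact interval.

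Next, introduce $\theta=\sqrt{\delta^2-\ve^2/2-a^2}\in[0,\ve/\sqrt2]$ and set
$$
V^-(\theta)\df f^-(\sqrt{\delta^2-\ve^2/2-\theta^2},\rtde,\delta,\theta).
$$
I would argue $V^-\le0$ on $[0,\ve/\sqrt2]$ by locating its interior critical points. Differentiating in $\theta$ and using $\tfrac{d}{d\theta}\sqrt{\delta^2-\ve^2/2-\theta^2}=-\theta/\sqrt{\delta^2-\ve^2/2-\theta^2},$ the equation $(V^-)'(\theta_*)=0$ rewrites as
$$
2\theta_* e^{-\rtdet}-(1+\rtde)e^{-\rtde-\theta_*}+(1+\delta)e^{-\delta+\theta_*}=0,
$$
which, substituted back into $V^-(\theta_*),$ gives
$$
V^-(\theta_*)=2e^{\theta_*}\Bigl[(1+(\rtdet+\theta_*))e^{-(\rtdet+\theta_*)}-(1+\delta)e^{-\delta}\Bigr].
$$
Now split into two cases exactly as in the $+$ proof. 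If $0\le\theta_*\le(\delta+\rtde)/2,$ Observation~$2^-$ gives $V^-(\theta_*)\le0$ directly. If $(\delta+\rtde)/2\le\theta_*\le\ve/\sqrt2,$ repeat the algebraic manipulation from Proposition~\ref{pr5} (square, rearrange) to conclude $\delta\ge\theta_*+\rtdet,$ after which the second monotonicity statement of Proposition~\ref{pr1} (the function $(1+t)e^{-t}$ is decreasing in $t\ge0$) yields
$(1+(\rtdet+\theta_*))e^{-(\rtdet+\theta_*)}\le(1+\delta)e^{-\delta},$ i.e.\ $V^-(\theta_*)\le0.$

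Thus every interior critical point satisfies $V^-(\theta_*)\le0,$ and the max of $V^-$ is attained at one of the endpoints. At $\theta=0,$ Observation~$2^-$ gives $V^-(0)\le0,$ so
$$
m^-(\delta,\ve)=\max\{0,V^-(\ve/\sqrt2)\}=\max\{0,f^-(\rtde,\rtde,\delta,\ve/\sqrt2)\},
$$
as claimed. The main obstacle I expect is the second case $\theta_*\ge(\delta+\rtde)/2,$ where one has to transform the size condition on $\theta_*$ into the ``right'' inequality $\delta\ge\theta_*+\rtdet$ so that Proposition~\ref{pr1} applies in its decreasing, rather than increasing, form; everything else is a straightforward sign-reversal transcription of the Proposition~\ref{pr5} argument.
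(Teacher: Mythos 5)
Your reduction to the one-variable function $V^-(\theta)$ and the case split at $\theta_*=(\delta+\rtde)/2$ follow the paper's template, and your algebraic identity
$$
V^-(\theta_*)=2e^{\theta_*}\Bigl[(1+(\rtdet+\theta_*))e^{-(\rtdet+\theta_*)}-(1+\delta)e^{-\delta}\Bigr]
$$
is a correct consequence of the critical-point equation (note: your displayed $(V^-)'(\theta_*)=0$ has the signs of the last two terms flipped — the correct equation is $2\theta_*e^{-\rtdet}+(1+\rtde)e^{-\rtde-\theta_*}-(1+\delta)e^{-\delta+\theta_*}=0$ — but the subsequent $V^-(\theta_*)$ formula you write is what comes out of the correct equation, so this is evidently a typo). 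However, the second case contains a genuine logical gap: you correctly deduce $\delta\ge\theta_*+\rtdet$, but since $(1+t)e^{-t}$ is \emph{decreasing} and $\rtdet+\theta_*\le\delta$, monotonicity gives $(1+(\rtdet+\theta_*))e^{-(\rtdet+\theta_*)}\ge(1+\delta)e^{-\delta}$, not $\le$. Plugging this into your formula yields $V^-(\theta_*)\ge0$, the opposite sign from what you need.

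This is exactly the subtlety that prevents the $``-"$ argument from being a verbatim sign-flip of the $``+"$ case. The paper sidesteps it by solving the critical-point equation for the other term, $(1+\delta)e^{-\delta+\theta_*}$, producing instead
$$
V^-(\theta_*)=2e^{-\theta_*}\Bigl[(1+(\rtdet-\theta_*))e^{-(\rtdet-\theta_*)}-(1+\rtde)e^{-\rtde}\Bigr].
$$
In the regime $\theta_*\ge(\delta+\rtde)/2$ one has $\theta_*-\rtdet\ge\rtde\ge0$, so $\rtdet-\theta_*\le0$ and the first summand inside the bracket equals $(1-|\rtdet-\theta_*|)e^{|\rtdet-\theta_*|}$. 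The paper then invokes the cross-inequality $(1-t_2)e^{t_2}\le(1+t_1)e^{-t_1}$ for $0\le t_1\le t_2$ (a consequence of Proposition~\ref{pr1} together with $(1+t)e^{-t}\ge(1-t)e^t$ for $t\ge0$) with $t_1=\rtde$, $t_2=\theta_*-\rtdet$ to conclude the bracket is $\le0$. To repair your proof you should either adopt this alternative factorization, or observe that your form and the paper's form both hold at a critical point and give opposite-sign conclusions, so that in fact $V^-(\theta_*)=0$ there; either way you need an ingredient beyond the straight mirror-image of the $``+"$ case.
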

\begin{proof}
By Proposition~\ref{pr3} and Proposition~\ref{pr4} we have
$$
m^-(\delta,\ve)=\max_{\rtde\le a\le\delta}m^-_a(\delta,\ve)= \max\{0,\max_{\rtde\le
a\le\sqrt{\delta^2-\ve^2/2}}f^-(a,\rtde,\delta,\sqrt{\delta^2-\ve^2/2-a^2})\}.
$$
Similarly to the ``$+$'' case, we express everything in terms of $\theta$ and maximize the
function
$$
V^-(\theta)\df f^-(\sqrt{\delta^2-\ve^2/2-\theta^2},\rtde,\delta,\theta),\qquad
0\le\theta\le\frac{\ve}{\sqrt2}\,.
$$
We have
$$
V^-(\theta)=2(1+\sqrt{\delta^2-\ve^2/2-\theta^2})e^{-\sqrt{\delta^2-\ve^2/2-\theta^2}}-
(1+\rtde)e^{-\rtde-\theta}-(1+\delta)e^{-\delta+\theta}.
$$
Assume that $V^-$ has a local extremum $\theta=\theta_*$ in the interval $(0,\ve/\sqrt2).$
Then $(V^-)'(\theta_*)=0,$ i.e.
$$
2\theta_*e^{-\rtdet}+(1+\rtde)e^{-\rtde-\theta_*}-(1+\delta)e^{-\delta+\theta_*}=0.
$$
Solving for $(1+\delta)e^{-\delta+\theta_*}$ and plugging the result into the expression for
$V^-,$ we get
$$
V^-(\theta_*)=2e^{-\theta_*}\left[(1+(\rtdet-\theta_*))e^{-(\rtdet-\theta_*)}-(1+\rtde)e^{-\rtde}\right].
$$
If $0\le\theta_*\le(\delta+\rtde)/2,$ then $V^-(\theta_*)\le0$ by Observation~\ref{obs2-}. Assume
now that $(\delta+\rtde)/2\le\theta_*\le\ve/\sqrt2.$ Since the function
$\theta_*\mapsto\theta_*-\rtdet$ is increasing in $\theta_*$, it attains its minimum at the
left endpoint
$$
\theta_*=\frac{\delta+\rtde}2,
$$
and this minimum is $\rtde$, i.~e.
$$
\theta_*-\rtdet\ge\rtde.
$$
Since $(1-t_2)e^{t_2}\le(1+t_1)e^{-t_1}$ for $0\le t_1\le t_2,$ we have
\begin{align*}
(1+&(\rtdet-\theta_*))e^{-(\rtdet-\theta_*)}-(1+\rtde)e^{-\rtde}\\
&=(1-|\rtdet-\theta_*|)e^{|\rtdet-\theta_*|}-(1+\rtde)e^{-\rtde}\le0
\end{align*}
and so $V^-(\theta_*)\le0.$ This means that
$$
\max_{0\le\theta}\{0,V^-(\theta)\}= \max\{0,V^-(0),V^-(\ve/\sqrt2)\}.
$$
But $V^-(0)\le0$ (by Observation~\ref{obs2-}) and therefore
$$
m^-(\delta,\ve)=\max\{0,V^-(\ve/\sqrt2)\}= \max\{0,f^-(\rtde,\rtde,\delta,\ve/\sqrt2)\}.\mbox{\qedhere}
$$
\end{proof}
This completes Stage 1$^-.$ We rephrase \eqref{t40-} by analogy with the ``$+$'' case. Let
$$
g^-(\delta,\ve)=f^-(\rtde,\rtde,\delta,\ve/\sqrt2).
$$
Equivalently, 
$$
g^-(\delta,\ve)=(1+\rtde)e^{-\rtde}\left(2-e^{-\ve/\sqrt2}\right)-(1+\delta)e^{-\delta+\ve/\sqrt2}.
$$
Then \eq[t43-]{ \delta^-(\ve)=\min_{\ve<\delta}\{\delta:g^-(\delta,\ve)\le 0\}. }
\subsection{Stage 2$^-$}
The following proposition will complete the proof of Lemma~\ref{l3d}.
\begin{propm}
\label{pr6-}
For any $\ve>0$ the equation $g^-(\delta,\ve)=0$ has a unique solution on the interval
$\bigl(\ve,\frac3{2\sqrt2}\ve\bigr)$ and it is $\delta^-(\ve)$ from~{\rm\eqref{t43-}}.
\end{propm}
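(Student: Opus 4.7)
The plan is to mirror the proof of Proposition~\ref{pr6}: establish strict monotonicity of $\delta\mapsto g^-(\delta,\ve)$ on the interval $[\ve,\tfrac{3}{2\sqrt 2}\ve]$, verify that $g^-$ has opposite signs at the two endpoints, and conclude by the intermediate value theorem that there is a unique zero in the open interval $(\ve,\tfrac{3}{2\sqrt 2}\ve)$. By the minimality condition in~\eqref{t43-}, this zero must coincide with $\delta^-(\ve)$.

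For monotonicity I would differentiate $g^-$ directly. Using $\partial_\delta[(1+\rtde)e^{-\rtde}]=-\delta\,e^{-\rtde}$ and the analogous derivative of the second term, one obtains
$$
\frac{\partial g^-}{\partial\delta}(\delta,\ve)=\delta\,e^{-\rtde}\bigl[e^{q(\delta)}-(2-e^{-\ve/\sqrt 2})\bigr],\qquad q(\delta)=\rtde-\delta+\tfrac{\ve}{\sqrt 2}.
$$
A short computation shows $q(\ve)=-\ve(1-1/\sqrt 2)<0$, $q(\tfrac{3}{2\sqrt 2}\ve)=0$, and $q'(\delta)=\delta/\rtde-1>0$ for $\delta>\ve$; hence $q\le 0$ throughout the interval and $e^{q(\delta)}\le 1<2-e^{-\ve/\sqrt 2}$ for $\ve>0$, giving $\partial_\delta g^-<0$ strictly.

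For the right endpoint, substituting $\delta=\tfrac{3}{2\sqrt 2}\ve$ yields $\rtde=\tfrac{\ve}{2\sqrt 2}$ and $-\delta+\ve/\sqrt 2=-\rtde$, so $g^-(\tfrac{3}{2\sqrt 2}\ve,\ve)$ collapses, after setting $s=\ve/\sqrt 2$, into $e^{-s/2}h(s)$ with $h(s)=1-s/2-(1+s/2)e^{-s}$. One checks $h(0)=h'(0)=0$ and $h''(s)=-(s/2)e^{-s}\le 0$, so $h(s)<0$ for $s>0$; consequently $g^-(\tfrac{3}{2\sqrt 2}\ve,\ve)<0$.

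The main obstacle is the left endpoint: I need $g^-(\ve,\ve)>0$ for \emph{every} $\ve>0$. A naive Taylor expansion around $\ve=0$ yields an alternating series whose leading $\ve^3$ coefficient is positive but whose subsequent coefficients alternate in sign, so the term-by-term argument used in the ``$+$'' case does not transfer. Instead I would argue by monotonicity: writing $g^-(\ve,\ve)=2-F(\ve)$ with $F(\ve)=(1+\ve)e^{-(1-1/\sqrt 2)\ve}+e^{-\ve/\sqrt 2}$, one has $F(0)=2$, and after a short simplification
$$
F'(\ve)=\tfrac{1}{\sqrt 2}\,e^{-(1-1/\sqrt 2)\ve}\bigl[1-v-e^{-v}\bigr],\qquad v=(\sqrt 2-1)\ve.
$$
The bracket vanishes at $v=0$ and has derivative $-1+e^{-v}<0$ for $v>0$, hence is strictly negative on $(0,\infty)$; therefore $F$ is strictly decreasing and $g^-(\ve,\ve)=2-F(\ve)>0$ for all $\ve>0$. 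Combining the three steps produces the unique root $\delta^-(\ve)\in(\ve,\tfrac{3}{2\sqrt 2}\ve)$ as required.
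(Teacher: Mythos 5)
Your proof is correct and follows essentially the same route as the paper's: rewrite $\partial_\delta g^-$ with $e^{-\rtde}$ factored out and show the bracket is negative on $(\ve,\tfrac{3}{2\sqrt2}\ve)$, check $g^-(\ve,\ve)>0$ by differentiating in $\ve$ with the elementary bound $x>1-e^{-x}$, and check $g^-(\tfrac{3}{2\sqrt2}\ve,\ve)<0$. The only cosmetic differences are that you package the left-endpoint computation as monotonicity of $F=2-g^-(\cdot,\cdot)$, and you spell out the right-endpoint negativity via the second derivative of $h$ where the paper simply asserts it (it also follows at once from Proposition~\ref{pr1}, since $(1-t)e^{t}<(1+t)e^{-t}$ for $t>0$).
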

\begin{proof}
At the left endpoint, we have
$$
g^-(\ve,\ve)=2-e^{-\ve/\sqrt2}-(1+\ve)e^{-\ve+\ve/\sqrt2}.
$$
Then, after differentiating and rearrangement,
$$
[g^-(\ve,\ve)]'=\frac1{\sqrt2}e^{-\ve+\ve/\sqrt2}\left[(\sqrt2-1)\ve-1+e^{-(\sqrt2-1)\ve}\right]>0,
$$
since $x>1-e^{-x}$ for $x>0.$ Making use of the fact that $g^-(0,0)=0,$ we get
$g^-(\ve,\ve)>0,$ $\forall\ve>0.$

On the other hand,
$$
g^-\Bigl(\frac3{2\sqrt2}\ve,\ve\Bigr)=e^{-\frac{\ve}{2\sqrt2}}\left(1-\frac{\ve}{2\sqrt2}-
\Bigl(1+\frac{\ve}{2\sqrt2}\Bigr)e^{-\frac{\ve}{\sqrt2}}\right)<0,\qquad\forall\ve\le0.
$$ This proves the existence
of a root on the interval $\bigl(\ve,\frac3{2\sqrt2}\ve\bigr).$

To check uniqueness, we differentiate $g^-$ with respect to $\delta.$
$$
\frac{\partial g^-}{\partial
\delta}(\delta,\ve)=\delta\left[e^{-\delta+\ve/\sqrt2}-e^{-\rtde}\left(2-e^{-\ve/\sqrt2}\right)\right].
$$
If $\ve<\delta<\frac3{2\sqrt2}\ve,$ then $-\delta+\ve/\sqrt2<-\rtde,$ and we have
$$
\frac{\partial g^-}{\partial \delta}(\delta,\ve)\le\delta
e^{-\rtde}\left(-1+e^{-\ve/\sqrt2}\right)<0.
$$
This completes the proof of Proposition~\ref{pr6-} and Lemma~\ref{l3d}.
\end{proof}
\subsection{How to find the dyadic Bellman function}
\label{howtobeld} 
For simplicity, we only consider the case of $\bel{B}^d=\bel{B}^{d+}.$ What
prompted us to look for the dyadic Bellman function in the family $B_\delta$ from
\eqref{t12}? Firstly, this family was first developed when solving the formal optimal control
problem from \cite{vol,vol1}, where the space under consideration was the dyadic $\BMO.$
Secondly, and more importantly, the following simple proposition shows that the dyadic
Bellman function is locally concave, something that could not be shown directly in the
continuous case. 
\begin{prop}
\label{pr7}
For any three points $x^-,x^+,x\in\Oe$
such that $x=\frac12(x^-+x^+)$ we have
$$
\bel{B}^d_{\ve}(x)\ge\frac12\bel{B}^d_{\ve}(x^-)+\frac12\bel{B}^d_{\ve}(x^+).
$$
\end{prop}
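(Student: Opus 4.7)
The natural approach is a direct concatenation argument that exploits the crucial fact (noted in Section~\ref{setup}) that $\bel{B}^{d+}_\varepsilon(y)$ does not depend on the ambient interval. Fix any interval $I$ with its two dyadic children $I_-, I_+$, so that $|I_\pm| = |I|/2$. For every $\eta > 0$, by the definition \eqref{t7} of $\bel{B}^{d+}_\varepsilon$ pick test functions $\varphi^\pm \in \BMO^d_\varepsilon(I_\pm)$ realizing the supremum to within $\eta$:
$$
\av{\varphi^\pm}{I_\pm} = x_1^\pm, \qquad \av{(\varphi^\pm)^2}{I_\pm} = x_2^\pm, \qquad \av{e^{\varphi^\pm}}{I_\pm} \ge \bel{B}^{d+}_\varepsilon(x^\pm) - \eta.
$$

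Next, glue them by setting $\varphi := \varphi^-$ on $I_-$ and $\varphi := \varphi^+$ on $I_+$. Since $|I_-| = |I_+| = \tfrac12|I|$, the averages are
$$
\av\varphi I = \tfrac12(x_1^- + x_1^+) = x_1, \qquad \av{\varphi^2}I = \tfrac12(x_2^- + x_2^+) = x_2,
$$
and likewise
$$
\av{e^\varphi}I = \tfrac12 \av{e^{\varphi^-}}{I_-} + \tfrac12 \av{e^{\varphi^+}}{I_+} \ge \tfrac12 \bel{B}^{d+}_\varepsilon(x^-) + \tfrac12 \bel{B}^{d+}_\varepsilon(x^+) - \eta.
$$
Once I verify $\varphi \in \BMO^d_\varepsilon(I)$, the definition gives $\bel{B}^{d+}_\varepsilon(x) \ge \av{e^\varphi}I$, and letting $\eta \to 0$ finishes the proof.

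The sole substantive step is the $\BMO^d$ verification, which is also the place where the three-point hypothesis is fully used. For any dyadic $J \subset I$ there are exactly three cases: $J \subseteq I_-$, $J \subseteq I_+$, or $J = I$. In the first two cases, $\varphi|_J = \varphi^\pm|_J$, so $\av{\varphi^2}J - \av\varphi J^2 \le \varepsilon^2$ directly from $\varphi^\pm \in \BMO^d_\varepsilon(I_\pm)$. The only remaining case is $J = I$, where the variance equals $x_2 - x_1^2$, and this is $\le \varepsilon^2$ precisely because the hypothesis asserts $x \in \Oe$. The analogous statement for $\bel{B}^{d-}_\varepsilon$ (with $\inf$ in place of $\sup$ and the reversed inequality) follows by the same construction. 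The ``main obstacle'' is really just recognizing that the assumption $x \in \Oe$ is exactly the right compatibility condition for the dyadic split $I = I_- \cup I_+$ to remain admissible—which is what makes the dyadic Bellman function locally concave in a way the continuous one is not.
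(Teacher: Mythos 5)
Your proof is correct and takes essentially the same approach as the paper: glue near-extremal test functions on the two dyadic halves, observe that $\BMO^d_\ve(I)$ decomposes into the $\BMO^d_\ve$ conditions on $I_\pm$ plus the single variance constraint on $I$ itself (which is exactly $x\in\Oe$), and average. The paper phrases the approximation via a sequence $\{\varphi_n\}$ rather than your $\eta$-parametrized extremizers, but this is a cosmetic difference.
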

\begin{proof}
Take a sequence
$\left\{\varphi_n\right\}\in \BMO^d_{\ve}(I_-)\cup \BMO^d_{\ve}(I_+)$ such
that
$$
\av{e^{\varphi_n}}{I_\pm}\longrightarrow \bel{B}^d_{\ve}(x^{\pm})~~~\text{as}~~n\to\infty.
$$
We need to check that $\varphi_n\in \BMO^d_{\ve}(I).$ But
$$
\BMO^d_{\ve}(I)=\left\{\varphi:~\varphi|_{I_-}\in\BMO^d_{\ve}(I_-),\;
\varphi|_{I_+}\in\BMO^d_{\ve}(I_+),\;\av{\varphi^2}I-\av{\varphi}I^2
\le\ve^2\right\}.
$$
Since, by assumption, $x\in\Oe,$ we have
$\av{\varphi^2}I-\av{\varphi}I^2\le\ve^2.$ Then we can pass to the limit in
the identity
$$
\av{e^{\varphi_n}}I=\frac12\av{e^{\varphi_n}}{I_-}+\frac12\av{e^{\varphi_n}}{I_+}
$$
to get
$$
\bel{B}^d_{\ve}(x)\ge\lim \av{e^{\varphi_n}}I=\frac12\bel{B}^d_{\ve}(x^-)
+\frac12\bel{B}^d_{\ve}(x^+),
$$
which completes the proof.
\end{proof}
Observe that the statement of the proposition does not hold in the continuous case. In that
case, we have $\ds\BMO_{\ve}(I)\ne\left\{\varphi:~\varphi|_{I_-}\in\BMO_{\ve}(I_-),\;
\varphi|_{I_+}\in\BMO_{\ve}(I_+),\;\av{\varphi^2}I-\av{\varphi}I^2
\le\ve^2\right\},$ since there are other intervals to consider, those with the left endpoint
in $I_-$ and the right one in $I_+.$

We have just proved that $\bel{B}^d_{\ve}$ is locally concave in $\Oe.$ Furthermore, the
reasoning of \eqref{t18} still works and we conclude that 
$$
\bel{B}^d_{\ve}(x)=\exp\left\{x_1+w(x_2-x_1^2)\right\}
$$ 
for a nonnegative function $w$ such
that $w(0)=0.$ What is more, we expect the corresponding matrix $-d^2\bel{B}^d_\ve$ (assuming
sufficient smoothness) to be degenerate, in order for the supremum to be attained for an
extremal function. But we have already described all functions with these properties. They
are the functions $B_{\delta}$ from \eqref{t12}. The condition $\delta\ge\ve$ appears because
the function $\bel{B}^d_{\ve}$ has to be defined on $\Oe,$ $\Oe\subset\Od$ for
$\delta\ge\ve,$ and $\Od$ is just the domain of $B_\delta.$ Thus we look for $\bel{B}^d$
within that family.
\subsection{How to find the dyadic extremal function}
\label{howtoextd} 
Again, we consider only the ``$+$'' case. Recall that in the continuous
case we were looking for a function that would produce equality on every step
in~\eqref{t17d}, i.e. in the Bellman induction of Lemma~\ref{l2d}. Thus, such a function was found
by analyzing what it took to make $B_\delta$ behave as a linear function, that is to have
$$
B_\delta(\alpha_-x^-+\alpha_+x^+)=\alpha_-B_\delta(x^-)+\alpha_+B_\delta(x^+).
$$
We now employ similar reasoning. Namely, we construct the dyadic extremal function for a
point on the top boundary so that we have equality in Lemma~\ref{l3d}, i.e.
\eq[t100]{
B_{\delta(\ve)}\left(\frac12x^-+\frac12x^+\right)=
\frac12B_{\delta(\ve)}(x^-)+\frac12B_{\delta(\ve)}(x^+)
}
at every dyadic split $I=I_-\cup
I_+.$ We construct a function $\varphi_0$ on $I=[0,1]$ for the point $x=(0,\ve^2).$ Then the
function $\varphi_a,$ $\varphi_a(t)=\varphi_0(t)+a,$ is an extremal function for the point
$(a, a^2+\ve^2).$ The proof of Lemma~\ref{l3d} gives us a hint for our construction: the extremum in
\eqref{t39}, \eqref{t40} was realized by a line segment whose center and one of the endpoints
(say $x^-$) lay on the top boundary curve of $\Oe,$ $x_2=x_1^2+\ve^2,$ i.e. $x=(0,\ve^2)$
and $x^-=(a, a^2+\ve^2)$, while the other endpoint, $x^+,$ lay on the bottom boundary curve
$x_2=x_1^2,$ i.e. $x^+=(-a, a^2)$. From the condition $x=\frac12 x_-+\frac12 x_+$ we get
$a=\ve/\sqrt2.$ Only constant functions correspond to the points of the bottom boundary,
so we have to put $\varphi_0(t)=x^+_1=-a$ for $\frac12<t<1$ and on $I^-$ we have to take the
scaled function $\varphi_a$: $\varphi_0(t)=\varphi_a(2t)= \varphi_0(2t)+a$ for $0<t<\frac12.$
The latter relation determines the function $\varphi_0$ recursively: $\varphi_0(t)=(n-1)a$
for $2^{-n-1}<t<2^{-n}.$ This yields the function on Figure~\ref{fi}.

We now describe how to construct an extremal function $\varphi$ when $(x_1,x_2)\ne(0,\ve^2).$
If $x_2=x_1^2+\ve^2,$ i.e. $x$ is on the top boundary, we simply let $\varphi=\varphi_0+x_1$
to get the desired result. Likewise, if $x$ is on the bottom boundary, we let $\varphi=x_1,$
i.e. set the function to be constant on the whole interval. What should we do if $x$ is in
the interior of $\Oe?$ We present two different perspectives on how this situation can be
dealt with. Both lead to the same expression for the extremal function $\varphi.$
\medskip

\noindent{\bf Perspective 1.}
Let us forget for a moment that we are to construct a {\it dyadic} extremal function; then we
can split $I$ so that $x^+$ is on the bottom boundary and $x^-$ is on the top one. Let
$\alpha$ be the splitting parameter, i.e. we have $I_-=(0,\alpha),$ $I_+=(\alpha,1),$ and
$x=\alpha x^-+(1-\alpha)x^+.$ We would like to choose the splitting so that
$$
B_{\delta(\ve)}(x)=\alpha B_{\delta(\ve)}(x^-)+(1-\alpha)B_{\delta(\ve)}(x^+).
$$
Then we can set $\varphi$ to be constant on the right subinterval and the appropriately
scaled function $\varphi_0$ on the left one and apply \eqref{t17d} from Lemma~\ref{l2d} to $I_-$ and
$I_+$ separately. To do this, we place $x^-,$ $x,$ and $x^+$ on a line $\omega^+_\delta$
tangent to the curve $x_2=x_1^2+\delta^2,$ since, according to section \ref{howtoextc},
$B^+_\delta$ is a linear function along any such segment. More precisely, we consider the
line through $x$ that is tangent to $x_2=x_1^2+\delta^2$ and set $x^-$ to be the point of
intersection of the line and the curve $x_2=x_1^2+\ve^2$  and $x^+$ to be the point of
intersection of the line and the curve $x_2=x_1^2.$ Let us calculate $\alpha.$ To avoid
confusion, we will temporarily use $x^0$ when referring to the ``midpoint'' of our segment.
Let us recall the notation of Lemma~\ref{l1d}
\eq[ppp]{
r_1=\rtde;~~r_2=\sqrt{\delta^2-x^0_2+(x^0_1)^2};~~\beta=r_2-r_1;~~\gamma=r_2-\delta;~~
\alpha=\frac{\delta-r_2}{\delta-r_1}. }
Also let
\eq[ppp1]{
\beta_1=\beta+x_1^0;~~\gamma_1=\gamma+x_1^0. }
According to \eqref{t26.1}, the line
$\omega^+_\delta(c)$ tangent to $x_2=x_1^2+\delta^2$ at the point $(c,c^2+\delta^2)$ has the
equation \eq[t200]{ x_2=2cx_1+\delta^2-c^2. } We calculate $c$ using the fact that this line
passes through $x^0.$ Since, in our geometry, $c\ge x^0_1,$ we have $c=x_1^0+r_2.$ Then
\eqref{t200} becomes
$$
x_2=2(x_1^0+r_2)x_1+\delta^2-(x_1^0+r_2)^2
$$
or, equivalently,
$$
(x_1-(x_1^0+r_2))^2=\delta^2+x^2_1-x_2.
$$
This line intersects the top boundary curve at the point $x^-=(\beta_1,\beta_1^2+\ve^2)$ (where we have used the fact that $x^0_1\le x^-_1\le c$); the
intersection with the bottom curve is at $x^+=(\gamma_1,\gamma_1^2).$ The (horizontal) length of the segment $[x^+,x^-]$ is
$\delta-r_1,$ that of the segment $[x^+,x^0]=\delta-r_2,$ so we get $x^0=\alpha x^-+(1-\alpha)x^+.$ Putting everything
together, we obtain the function $\tilde{\varphi}_+$ from the proof of Lemma~\ref{l1d}
$$
\tilde{\varphi}_+(t)=x_1^0+
\begin{cases}
\varphi_0\left(\frac{t}{\alpha}\right)+\beta & \mbox{for}~0<t<\alpha\\
\gamma & \mbox{for}~\alpha<t<1.
\end{cases}
$$
We must pay the price for ignoring the fact that $(0,\alpha)$ is not, in general, a dyadic
interval and, therefore, $\tilde{\varphi}_+$ is not in $\BMO^d_\ve(I).$ How to construct an
appropriate rearrangement $\varphi_+$ of $\tilde{\varphi}_+$ is detailed in the proof of
Lemma~\ref{l1d}.
\medskip

\noindent{\bf Perspective 2.} It is useful to consider another perspective on constructing an
extremal function. We will start with the function $\varphi_0$ built for the point
$x=(0,\ve^2)$ and arrive at the same function $\varphi_+$ for an arbitrary point $x^0$ as the
one in Lemma~\ref{l1d} but using a different reasoning and skipping the $\tilde{\varphi}_+$ phase
altogether. The main feature of this construction is that on every step we define our
function on a dyadic subinterval of $(0,1),$ as opposed to choosing an $\alpha$ and then
approximating it dyadically as in Perspective~1.

Here is the simple logic: Starting with $I=(0,1),$ we will define our function on the right
half of $I,$ then redefine $I$ to be the other half and repeat the procedure. Consider, as
before, the line through $x^0$ tangent to $x_2=x_1^2+\delta^2;$ let $x^t$ and $x^b$ be the
points of intersection of the tangent with the top and bottom boundary of $\Oe,$
respectively. If $x^0$ is closer to $x^t$ than to $x^b,$ set $\varphi$ to be the
appropriately scaled (and adjusted to have the prescribed average) function $\varphi_0$ on
$I_+$ and replace $x_0$ with $2x_0-x^t.$ If, on the contrary, $x^0$ is closer to $x^b$ than
to $x^t,$ set $\varphi$ to be the appropriately chosen constant on $I_+$ and replace $x_0$
with $2x^0-x^b.$ In either case, replace $I$ with $I_-$ and repeat. If $x^0$ is exactly in
the middle between $x^b$ and $x^t,$ let $\varphi$ be the scaled $\varphi_0$ on $I_+$ and
constant on $I_-;$ stop.

We will now make this procedure more precise and show why the function so obtained is the
same as the one used to prove Lemma~\ref{l1d}.

Start with a point $x^0\in\Oe.$ Let $x^*=x^0,$ $I=(0,1)$ (the initial settings; $x^*$ and $I$
will be redefined in the procedure). Let $r_1,$ $\beta_1,$ and $\gamma_1$ be defined by
\eqref{ppp} and \eqref{ppp1} (these will not be redefined).
\ben
\item
\label{s1}
Let
$r_2=\sqrt{\delta^2-x^*_2+(x^*_1)^2}$

-- if $\delta+r_1<2r_2,$ go to Step \ref{s2};

-- if $\delta+r_1>2r_2,$ go to Step \ref{s3};

-- if $\delta+r_1=2r_2,$ go to Step \ref{s4}.
\item
\label{s2}
Let $\varphi|_{I_+}=\gamma_1.$
\item
Let $x^b=(\gamma_1,\gamma_1^2),~x^*:=2x^*-x^b,~I:=I_-.$ Go to Step \ref{s1}.
\item
\label{s3}
Let $\varphi|_{I_+}=\varphi_0(2^kt+1)+\beta_1.$
\item
Let
$x^t=(\beta_1,\beta_1^2+\ve^2),~x^*:=2x^*-x^t,~I:=I_-.$ Go to Step \ref{s1}.
\item
\label{s4}
Let $\varphi|_{I_+}=\varphi_0(2^kt+1)+\beta_1,~\varphi|_{I_-}=\gamma_1.$ Stop.
\een
Since on
every run of the loop we define $\varphi$ on half of the current interval $I$ and then rename
the other half $I,$ at the end we have defined $\varphi$ almost everywhere on $(0,1).$
Furthermore, since every interval in the process is dyadic and $\varphi\in\BMO^d_\ve(J)$ for
every interval $J$ that turns up on step \ref{s2}, \ref{s3}, or \ref{s4}, we conclude that
$\varphi\in \BMO^d_\ve([0,1])$ (see the short discussion after the proof of Proposition~\ref{pr6}).
All the action happens on the same line tangent to the parabola $x_2=x_1^2+\delta^2,$
guaranteeing equality in \eqref{t17d} of Lemma~\ref{l2d}.

The inequality $\delta+r_1<2r_2$ (or $>,=$) is equivalent to the inequality
$\delta-r_2<r_2-r_1$ (or $>,=$), the statement that the distance from $x^0$ to the bottom
boundary curve is less than that to the top one. Alternatively, this inequality is equivalent
to $\frac{\delta-r_2}{\delta-r_1}<\frac12,$ i.e., in the language of Perspective~1,
$\alpha<\frac12.$ But comparing this, current $\alpha$ to $1/2$ is the same as determining
whether the current dyadic digit of the original $\alpha$ is $0$ or $1.$ Indeed, if the
current $x^*$ is closer to the top boundary, its next value will be twice as far from it;
same holds for the bottom boundary. Let us quantify this.

Let $z_0=\alpha,$ $z_k=\frac{\delta-(r_2)_k}{\delta-r_1},$ the value on the $k\!$-th step of
our procedure. By construction, if $z_{k-1}>1/2,$ then $z_k=2z_{k-1}-1,$ and if
$z_{k-1}<1/2,$ then $z_k=2z_{k-1}.$ Thus $z_k=\{2z_{k-1}\},$ the fractional part of
$2z_{k-1}.$ Then $\alpha_k\df[2z_{k-1}]$ (the integer part) is the $k\!$-th dyadic digit of
$\alpha.$ Recalling definition \eqref{7.1}, we see that the function $\varphi$ so obtained is
indeed the same as $\varphi_+$ in Perspective~1.
\section{Conclusion}
In this section, we summarize what has been achieved, specify which obstacles need to be
overcome on the way to generalizing the results, and outline immediate and long-term
prospects.

From a purely practical viewpoint, we have obtained sharp new results in a widely-used
inequality; in addition, the dyadic $\BMO$ formulation is common in applications, therefore
exploring the problem in this setting --- and showing that the results differ significantly
from the continuous setting --- is important.

Equally significant is the methodological aspect of this work. We have added another
nontrivial example to the short list of explicit Bellman functions. This paper can be viewed
as an excellent case study, following every step in the recent explicit-Bellman paradigm. As
far as we know, our transition to the dyadic case from the continuous one is unique in
literature; as mentioned in the introduction, the usual way is the opposite. The dyadic
setting has been prevalent in Bellman function studies, our getting of an explicit
continuous-case Bellman function is noteworthy in itself.

There are several natural questions one may ask:
\medskip

1. Can the results be extended to the $L^p\!$-based $\BMO?$
\medskip

The choice of variables \eqref{var} (and so the associated Bellman function definitions)
depends heavily on the $L^2\!$-structure of our $\BMO.$ For $p>1$ it is possible to consider
the choice $x_2=\ave{\varphi^p},$ although the associated norms are not the regular
$L^p\!$-based $\BMO$ norms. It appears that an altogether different Bellman setup may be
needed for the $L^1$ case.
\medskip

2. Can the results be extended to higher dimensions?
\medskip

Once we move to higher dimensions, there is the question of how one defines $\BMO.$ Typical
definitions are using cubes or balls, although other definitions are possible. Since our
technique depends critically on one's ability to split a body in $\mathbb{R}^n$ into bodies
of the same type, it seems that the dyadic case is more amenable to higher-dimensional
considerations because in the dyadic situation we have no problem splitting a cube into a
union of smaller cubes. In the continuous case, however, the crucial splitting tool we have
used, Lemma~\ref{l4c}, is pointedly one-dimensional. We could easily generalize our results to the
$n\!$-parameter $\BMO$ on rectangles, but this appears to be of little interest.

Often in Bellman proofs one relies on a certain dyadic Bellman function to handle all
dimensions. Naturally, our continuous-to-dyadic way of solving the problem does not go
through in that sense. In addition, the continuous and dyadic results are expected to be
increasingly different as dimension grows. Overall, new techniques are needed (work is
underway) to deal with the higher-dimensional case.

Despite our present inability to handle the multidimensional case, we would like to put
forward two related conjectures, for the $\BMO$ defined on cubes.

{\bf Conjecture~1.} Theorems~\ref{t1c} and~\ref{t2c} remain true in the multidimensional case, i.~e. in the
non-dyadic case the Bellman function does not depend on the dimension.

{\bf Conjecture~2.} In the dyadic $n\!$-dimensional case the Bellman functions are
$B^\pm_{\delta_n^\pm},$ where the parameters $\delta_n^\pm=\delta_n^\pm(\ve)$ are the
solutions of the following equations
$$
(1\mp\rtde)\exp(\pm\rtde\mp\delta)\left(2^n-e^{\pm(2^{n/2}-2^{-n/2})\ve}\right)=
(1\mp\delta)(2^n-1)e^{\mp\ve2^{-n/2}},
$$
and, therefore, the corresponding constants $C^d_n(\ve)$ and $\ve^d_0(n)$ are
\begin{gather*}
C^d_n(\ve)=\frac{(2^n-1)e^{-\ve2^{-n/2}}}{2^n-e^{(2^{n/2}-2^{-n/2})\ve}},
\\
\ve^d_0(n)=\frac{n\log2}{2^{n/2}-2^{-n/2}}.
\end{gather*}
These conjectures are true if it is true that the extremal function corresponding the point
$(0,\ve^2)$ is
$$
\varphi_0(t_1,\ldots,t_n)=\ve\left(n\log\frac1{\;\max t_k}-1\right)
$$
in the non-dyadic case and
$$
\varphi_0^d(t_1,\ldots,t_n)=-\ve2^{-n/2}+
\sum_{k=1}^\infty(2^{n/2}-2^{-n/2})\ve\chi_{_{[0,2^{-k}]}}(\max t_k)
$$
in the dyadic one.
\medskip

3. Can the classical weak-from John--Nirenberg inequality be handled by the methods of this paper?
\medskip

At the moment, this appears to be the most promising of all directions of further research on
the topic. By design, the Bellman function for a distributional inequality will have one more
variable (at least, another parameter), compared to the integral case. This implies that the
order of Bellman PDE in the weak-form case will be higher.

On the other hand, we have a ready choice of variables, just reusing the ones form this
paper. The usual logic that allows one to establish a finite-difference inequality for the
Bellman function still works. In~\cite{tao} a Bellman-type function satisfying this
inequality (so called supersolution) was found for the dyadic BMO. This showed that the
Bellman function method works for the weak form of the John--Nirenberg inequality. However,
not being the true Bellman function, this supersolution only gives suboptimal (not sharp)
constants in the inequality. It is our hope to be able to find the true Bellman function for
the this inequality as well. Being the averages of functions, our variables have a clear
martingale structure, thus we expect to be able to rewrite that inequality as a homogeneous
Monge-Amp\'{e}re equation, just as we have done here. Though that equation will not reduce to
an ODE, there has been a recent surge (and success) in in-depth studies of the connection of
such PDEs with the Bellman function method. All of this gives this problem a very promising
outlook.

\end{document}